\newtheorem{theorem}{Theorem}[section]
\newtheorem*{theorem*}{Theorem}
\newtheorem{lemma}[theorem]{Lemma}
\newtheorem{proposition}[theorem]{Proposition}
\newtheorem*{proposition*}{Proposition}
\newtheorem{corollary}[theorem]{Corollary}
\newtheorem*{corollary*}{Corollary}
\newtheorem*{conjecture*}{Conjecture}
\newtheorem{question}[theorem]{Question}
\newtheorem*{question*}{Question}
\theoremstyle{definition}
\newtheorem{definition}[theorem]{Definition}
\newtheorem*{definition*}{Definition}
\newtheorem{remark}[theorem]{Remark}
\newtheorem{observation}[theorem]{Observation}
\newcommand{\N}{\mathbb{N}}
\newcommand{\Z}{\mathbb{Z}}
\newcommand{\Q}{\mathbb{Q}}
\newcommand{\R}{\mathbb{R}}
\newcommand{\euler}{\mathcal{X}}
\newcommand{\calH}{\mathcal{H}}
\newcommand{\calF}{\mathcal{F}}
\newcommand{\calP}{\mathcal{P}}
\newcommand{\calU}{\mathcal{U}}
\newcommand{\PSA}{\mathrm{PSAut}}
\newcommand{\PSO}{\mathrm{PSOut}}
\newcommand{\SA}{\mathrm{SAut}}
\newcommand{\SO}{\mathrm{SOut}}
\newcommand{\PB}{\mathrm{PB}}
\newcommand{\sphere}{\mathbb{S}}
\newcommand{\Scomb}{$\Sigma^2_{\rm{comb}}$}
\newcommand{\Scombp}{$(\Sigma^2_{\rm{comb}})$}
\DeclareMathOperator{\Hom}{Hom}
\DeclareMathOperator{\Aut}{Aut}
\DeclareMathOperator{\Out}{Out}
\DeclareMathOperator{\Inn}{Inn}
\DeclareMathOperator{\MMO}{MMO}
\DeclareMathOperator{\MMA}{MMA}
\DeclareMathOperator{\WO}{WO}
\DeclareMathOperator{\WA}{WA}
\DeclareMathOperator{\ZO}{ZO}
\DeclareMathOperator{\ZA}{ZA}
\DeclareMathOperator{\F}{F}
\DeclareMathOperator{\FP}{FP}
\DeclareMathOperator{\supp}{supp}
\DeclareMathOperator{\Cay}{Cay}
\DeclareMathOperator{\Cos}{Cos}
\numberwithin{equation}{section}
\begin{document}

\title{Dense and empty BNSR-invariants of the McCool groups}
\date{\today}
\subjclass[2020]{Primary 20F65;   
                 Secondary 57M07} 

\keywords{BNSR-invariant, automorphisms of free groups, McCool group, loop braid group, finiteness properties}

\author[M.~Ershov]{Mikhail Ershov}
\address{Department of Mathematics, University of Virginia, Charlottesville, VA}
\email{ershov@virginia.edu}

\author[M.~C.~B.~Zaremsky]{Matthew C.~B.~Zaremsky}
\address{Department of Mathematics and Statistics, University at Albany (SUNY), Albany, NY}
\email{mzaremsky@albany.edu}

\begin{abstract}
An automorphism of the free group $F_n$ is called \emph{pure symmetric} if it sends each generator to a conjugate of itself. The group $\PSA_n$ of all pure symmetric automorphisms and its quotient $\PSO_n$ by the group of inner automorphisms are called the \emph{McCool groups}. In this paper we prove that every BNSR-invariant $\Sigma^m$ of a McCool group is either dense or empty in the character sphere, and we characterize precisely when each situation occurs. Our techniques involve understanding higher generation properties of abelian subgroups of McCool groups, coming from the McCullough--Miller space. We also investigate further properties of the second invariant $\Sigma^2$ for McCool groups using a general criterion due to Meinert for a character to lie in $\Sigma^2$.
\end{abstract}

\maketitle
\thispagestyle{empty}

\section{Introduction}

Let $G$ be a finitely generated group. The \emph{BNS-invariant} $\Sigma(G)$ 
is a fundamental geometric object introduced in the seminal 1987 paper of Bieri, Neumann, and Strebel \cite{bieri87}. In 1988, two families of its higher-dimensional generalizations were defined: \emph{homological BNSR-invariants} $\Sigma^m(G,R)$, $m\in\N$, where $R$ is a commutative unital ring, with $\Sigma^1(G,\mathbb Z)=\Sigma(G)$, studied by Bieri and Renz in \cite{bieri88}, and \emph{homotopical BNSR-invariants} $\Sigma^m(G)$, $m\in\N$, with $\Sigma^1(G)=\Sigma(G)$, studied by Renz in \cite{renz_thesis}. In this paper we will only deal with homotopical invariants and refer to them simply as \emph{BNSR-invariants}. The $m$th invariant $\Sigma^m(G)$ is defined whenever $G$ is of \emph{type $F_m$}, meaning $G$ admits a classifying space with finite $m$-skeleton. In general the BNSR-invariants of $G$ form a descending chain $\Sigma^1(G)\supseteq \Sigma^2(G)\supseteq \cdots$ of open subsets of the sphere $\sphere(G) = S^d$, where $d = \dim \Hom(G,\R)-1$. Among other things, $\Sigma^m(G)$ completely determines which coabelian subgroups of $G$ are of type $\F_m$ (we call a subgroup of $G$ \emph{coabelian} if it contains the commutator subgroup $[G,G]$); in particular, $[G,G]$ itself is of type $\F_m$ if and only if $\Sigma^m(G)=\sphere(G)$.

While the BNS-invariant $\Sigma(G)$ has been fully computed in a number of interesting cases, the class of groups for which all BNSR-invariants $\Sigma^m(G)$ are defined and completely determined for all $m$ is much smaller. One prominent class for which the latter has been accomplished is right-angled Artin groups (RAAGs) \cite{meier98}, and it is natural to ask what can be said about BNSR-invariants for groups which are sufficiently close to RAAGs in some sense. Examples of such groups where some progress has been made include Artin groups in general \cite{meier01,blascogarcia22,escartin-ferrer24} and Bestvina--Brady groups \cite{kochloukova22}. In this paper we will focus on 
the computation of BNSR-invariants for yet another collection of groups similar to RAAGs which are frequently called \emph{McCool groups}.

Let $F_n$ be the free group of rank $n$, with a fixed basis $\{x_1,\dots,x_n\}$. An automorphism of $F_n$ is called \emph{pure symmetric} if it sends each $x_i$ to a conjugate of itself. The subgroup $\PSA_n$ of the automorphism group $\Aut(F_n)$ consisting of all pure symmetric automorphisms arises in many contexts and has several different names. It is sometimes simply called the \emph{pure symmetric automorphism group}, sometimes the \emph{McCool group}, in reference to seminal work of McCool \cite{mccool86}, and sometimes the \emph{pure loop braid group}, for its appearance as the group of motions of $n$ unlinked distinguishable circles in $3$-space; see for example \cite{baez07,brendle13} for some important applications of these groups and \cite{damiani17} for an overview of all their different guises. The ``outer'' version $\PSO_n$ of $\PSA_n$ is the quotient of $\PSA_n$ by the group of inner automorphisms, and we will collectively refer to all the $\PSA_n$ and $\PSO_n$ as \emph{McCool groups}.

At a very superficial level, McCool groups are similar to RAAGs because they admit simple presentations by generators and relations where all relations are of the form $[u,v]=1$ for some words $u$ and $v$, and in the majority of relations $u$ and $v$ are both generators. However, there are much deeper analogies between these groups, which make many techniques and ideas from \cite{meier98} applicable to McCool groups.

The first BNS-invariant for the McCool groups has been completely determined by Orlandi-Korner in \cite{orlandikorner00}. In \cite{zaremsky18}, the second author computed some pieces of the higher BNSR-invariants $\Sigma^m(\PSA_n)$ and conjectured that each $\Sigma^m(\PSA_n)$ is either dense (in the respective sphere) or empty, depending on whether $m\leq n-2$ or not \cite[Remark~4.24]{zaremsky18}. The main theorem of this paper confirms this conjecture and also establishes the analogous result for the groups $\PSO_n$:
\begin{theorem}\label{thm:mainPSA}
For all $n\ge 2$, the following hold:
\begin{itemize}
\item[(a)] $\Sigma^{n-2}(\PSA_n)$ is dense in $\sphere(\PSA_n)$. 
\item[(b)] $\Sigma^{n-1}(\PSA_n)=\emptyset$.
\end{itemize}
For all $n\ge 3$, the following hold:
\begin{itemize}
\item[(c)] $\Sigma^{n-3}(\PSO_n)$ is dense in $\sphere(\PSO_n)$.
\item[(d)] $\Sigma^{n-2}(\PSO_n)=\emptyset$.
\end{itemize}
Hence, $\Sigma^k(\PSA_n)$ is dense for all $0\leq k\le n-2$ and empty for all $k\ge n-1$, and $\Sigma^k(\PSO_n)$ is dense for all $0\leq k\le n-3$ and empty for all $k\ge n-2$.
\end{theorem}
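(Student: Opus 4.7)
\emph{Proof plan.} The last sentence of the theorem follows from (a)--(d) by the monotonicity $\Sigma^1(G)\supseteq\Sigma^2(G)\supseteq\cdots$. The statements (c), (d) for $\PSO_n$ should parallel (a), (b) for $\PSA_n$ with a shift of one in cohomological dimension, reflecting the fact that the \emph{McCullough--Miller complex}, a contractible classifying-type model for $\PSO_n$, has dimension $n-2$, while its natural $\PSA_n$-equivariant lift has dimension $n-1$. Alternatively, (c), (d) can be deduced from (a), (b) via the short exact sequence $1\to\Inn(F_n)\to\PSA_n\to\PSO_n\to 1$ and the compatibility of BNSR-invariants with extensions whose kernel is finitely generated free. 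Below I describe the plan for (a) and (b).

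\emph{Density of $\Sigma^{n-2}(\PSA_n)$ (part (a)).} The strategy is to produce, out of the McCullough--Miller geometry, a finite (up to conjugacy) family $\mathcal{A}$ of abelian subgroups of $\PSA_n$ and exploit higher generation by $\mathcal{A}$. Each $A\in\mathcal{A}$ is free abelian, hence satisfies $\Sigma^m(A)=\sphere(A)$ for every $m\ge 0$. The key technical step is to show that $\mathcal{A}$ is \emph{$(n-2)$-generating} in the sense of Abels--Holz: the associated coset complex/nerve is $(n-2)$-connected. This connectivity should be extracted from the contractibility and combinatorial structure of the McCullough--Miller complex via a nerve-lemma style argument (covering by stars of abelian stabilizers and verifying the requisite intersection connectivity). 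Once higher generation is available, a criterion of the same flavour as the one used by Meier--Meinert--VanWyk for RAAGs in \cite{meier98} yields that every character $\chi$ whose restriction to each $A\in\mathcal{A}$ is nonzero (the generic condition) lies in $\Sigma^{n-2}(\PSA_n)$. Such characters form the complement of finitely many proper subspheres of $\sphere(\PSA_n)$, hence a dense subset.

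\emph{Emptiness of $\Sigma^{n-1}(\PSA_n)$ (part (b)).} For every $[\chi]\in\sphere(\PSA_n)$ I would exhibit an explicit obstruction to $[\chi]\in\Sigma^{n-1}(\PSA_n)$. Using the $(n-1)$-dimensional $\PSA_n$-equivariant lift of the McCullough--Miller complex together with a discrete Morse function induced by $\chi$, the aim is to locate a vertex whose ascending or descending link fails to be $(n-2)$-connected. The special-character case was handled by such a Morse argument in \cite{zaremsky18}; the new work is to carry out this analysis uniformly over the entire character sphere, likely by partitioning $\sphere(\PSA_n)$ into finitely many ``combinatorial types'' of characters and treating each type with a suitable Morse function.

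\emph{Main obstacle.} I expect the hardest step to be the higher-generation statement, i.e.\ the $(n-2)$-connectivity of the coset complex of $\mathcal{A}$ in part (a). Although the contractibility of the McCullough--Miller complex is classical, promoting it to higher generation by the family of abelian subgroups requires a bespoke combinatorial argument tailored to the McCool setting. In part (b) the parallel challenge is to make the Morse-theoretic obstruction genuinely uniform in $\chi$ rather than only at specific representatives.
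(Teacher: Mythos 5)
Your plan here is essentially the paper's argument. The paper takes the family $\mathcal{HA}_n$ (resp.\ $\mathcal{HO}_n$) of free abelian subgroups parametrized by the Whitehead poset, shows that it is $\infty$-generating by identifying the coset complex with the (contractible) McCullough--Miller space, and then invokes the Meier--Meinert--Van Wyk criterion to place every generic character in $\Sigma^{n-2}(\PSA_n)$. One ingredient you gloss over: since the trivial subgroup lies in the intersection-closed family, Theorem~3.1 of \cite{meier98} also requires the flag complex $\calF_0(\calH)$ of the poset of \emph{non-trivial} members to be highly connected. The paper handles this by proving the Whitehead posets $\WO_n$ and $\WA_n$ are homotopy Cohen--Macaulay via recursive atom orderings (adapting Brady--McCammond--Meier--Miller for the auter case). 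This is a genuine piece of work, separate from and in addition to the contractibility of the McCullough--Miller space, and you would need to supply it.

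\textbf{Part (b)/(d):} Here you and the paper diverge substantially. You propose a Morse-theoretic obstruction on the $(n-1)$-dimensional complex, uniformized over the character sphere by partitioning into combinatorial types; you correctly flag that making this uniform is the hard part, and indeed I do not think this route closes easily — the ascending/descending links vary too wildly with $\chi$. The paper instead uses an essentially soft argument: $\PSA_n$ and $\PSO_n$ are of type $\F$, their BNSR-invariants are symmetric, and their Euler characteristics $(1-n)^{n-1}$ and $(1-n)^{n-2}$ are non-zero; one then shows (Proposition~\ref{prop:euler}) that any group of type $\F$ with symmetric BNSR-invariants and non-vanishing Euler characteristic has $\Sigma^\infty=\emptyset$, by choosing a discrete character with $[\pm\chi]\in\Sigma^\infty$, observing the kernel is of type $\FP$, and using multiplicativity of Euler characteristic in $1\to K\to G\to\Z\to 1$ to get a contradiction. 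This replaces your case analysis with one short homological lemma.

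\textbf{Deducing (c)/(d) from (a)/(b):} Your alternative suggestion of inferring the $\PSO_n$ statements from the $\PSA_n$ statements via the short exact sequence $1\to\Inn(F_n)\to\PSA_n\to\PSO_n\to 1$ with ``compatibility of BNSR-invariants with extensions'' is not supported by any clean general theorem. The known fact (\cite[Proposition~3.3]{bieri87}) is that $\Sigma^1$ passes to quotients, but there is no dimension-shift principle for higher BNSR-invariants under extensions by $F_{n-1}$. The paper simply runs the parallel argument directly for $\PSO_n$ using $\mathcal{HO}_n$, $\MMO_n$, and $\WO_n$, with the relevant dimensions dropping by one because the Whitehead poset and McCullough--Miller space are one dimension lower in the outer setting.
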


\begin{remark}$\empty$
\begin{itemize}
\item[(i)] Since we always have $\sphere(G)=\Sigma^0(G)\supseteq \Sigma^1(G)\supseteq\cdots$, the ``hence'' part is immediate from parts (a)--(d).
\item[(ii)] All assertions of Theorem~\ref{thm:mainPSA} were previously known for $n\leq 3$. Indeed, $\Sigma^{1}(\PSA_2)$ is empty
and $\Sigma^{1}(\PSA_3)$ is dense in $\sphere(\PSA_3)$ by the main theorem of \cite{orlandikorner00}, and the proof of that theorem also
easily implies that $\Sigma^{1}(\PSO_3)$ is empty. Finally, the equality $\Sigma^{2}(\PSA_3)=\emptyset$ was established by the second author
in \cite{zaremsky18}.
\end{itemize}
\end{remark}

While Theorem~\ref{thm:mainPSA} naturally extends the results and
confirms the aforementioned conjecture from \cite{zaremsky18}, the techniques used in the proofs are very different, as we will see later.

A consequence of Theorem~\ref{thm:mainPSA}, together with Theorem~\ref{thrm:bnsr_fin_props} below, is that if $\chi\colon \PSA_n\to \Z$ is a ``generic'' discrete character of $\PSA_n$, then the kernel of $\chi$ is of type $\F_{n-2}$ but not $\F_{n-1}$. Similarly, the kernel of a ``generic'' discrete character of $\PSO_n$ is of type $\F_{n-3}$ but not $\F_{n-2}$. We should also mention that our proof of parts (b) and (d) actually shows the stronger statement that $\Sigma^{n-1}(\PSA_n;\Z)=\emptyset$ and $\Sigma^{n-2}(\PSO_n;\Z)=\emptyset$, where these are the homological BNSR-invariants.

\medskip

In order to prove parts (b) and (d), about certain BNSR-invariants being empty, we prove a rather easy (and somewhat ``folklore''), but very general, result about groups of type $\F$ (meaning groups admitting a finite classifying space) with non-zero Euler characteristic -- see Proposition~\ref{prop:euler} which asserts that if $G$ is a group of type $\F$ whose BNSR-invariants are symmetric (Definition~\ref{def:symmetric}) and whose Euler characteristic is non-zero, then $\Sigma^\infty(G)=\emptyset$. Since for a group of type $\F$ we have $\Sigma^\infty(G)=\Sigma^m(G)$ where $m$ is the dimension of a finite classifying space for $G$, parts (b) and (d) follow easily from known facts about the McCool groups.

Proving parts (a) and (c), about certain BNSR-invariants being dense, requires quite a lot more effort. The key is a criterion due to Meier, Meinert and Van Wyk \cite{meier98} that relates the BNSR-invariants of a group to the BNSR-invariants of its subgroups, using the notion of higher generation of groups by families of subgroups introduced by Abels and Holz in \cite{abels93}; see Theorem~\ref{thrm:MMV98}. The Meier--Meinert--Van Wyk criterion involves a number of hypotheses on the group, and in order to apply it to the McCool groups we make use of the Whitehead posets and McCullough--Miller spaces (see Definitions~\ref{def:whitehead} and~\ref{def:mm}) and the fact that the groups are highly generated by abelian subgroups; see Proposition~\ref{prop:contractible}. In a recent paper \cite{ardaizgale}, a generalization of McCullough--Miller space for RAAGs is constructed with all these desired properties, so it would be interesting to use this to analyze the BNSR-invariants of the analogs of the McCool groups for RAAGs.

\medskip

Given that $\PSA_n$ is sometimes called the ``pure loop braid group'', it is natural to ask if one could prove similar results about the pure braid groups $\PB_n$. Since $\PB_n$ splits as $(\PB_n/Z(\PB_n))\times \Z$ \cite[Section~9.3]{farbmargalit}, $\Sigma^\infty(\PB_n)$ is dense in its character sphere for trivial reasons, but an interesting problem is understanding when $\Sigma^m(\PB_n/Z(\PB_n))$ is dense or empty. In fact, $\PB_n/Z(\PB_n)$ is isomorphic to the pure mapping class group of the $(n+1)$-punctured sphere (equivalently the $n$-punctured plane) \cite[Section~9.3]{farbmargalit}, and it is already known that $\Sigma^{n-2}(\PB_n/Z(\PB_n))=\emptyset$ for all $n\ge 3$ \cite[Example~7.3]{hughes_kielak} (when $n<3$ the group is trivial). Thus, a natural conjecture is that $\Sigma^{n-3}(\PB_n/Z(\PB_n))$ is dense in its character sphere. This is trivially true for $n=3$ and can be confirmed for $n=4$ thanks to the computation of $\Sigma^1(\PB_4)$ in \cite{koban15}. An impediment to proving this conjecture for larger $n$ is a lack of a known analog for $\PB_n$ of the McCullough--Miller space for $\PSA_n$ and $\PSO_n$ and, more generally, a lack of a known highly generating family of abelian subgroups. If one could find a family of abelian subgroups of $\PB_n$ that is $\infty$-generating in the sense of Definition~\ref{def:hi_gen}, then proving  that $\Sigma^{n-3}(\PB_n/Z(\PB_n))$ is dense would probably be within reach. We should mention that some results about highly generating families for $\PB_n$ are in the appendix of \cite{bux16}, but the degree of high generation is not large enough to tell us anything about $\Sigma^{n-3}(\PB_n/Z(\PB_n))$.

\medskip

We also prove further results about the second BNSR-invariant $\Sigma^2$ for McCool groups. In dimension 2, an underutilized result from the Ph.D. thesis of Meinert \cite{Meinertthesis} leads to an easy-to-check sufficient condition for a character class to lie in $\Sigma^2(Q)$, where $Q$ is a quotient of a well-understood group $G$ -- see Theorem~\ref{thm:Meinertcentral} for the general criterion and Proposition~\ref{prop:Sigma2sufficient} for the specific application to McCool groups. Since Meinert's thesis is not easily accessible, we also provide a proof of Meinert's theorem; see Theorem~\ref{thm:Meinert} in Appendix~\ref{app:meinert}.

\medskip

This paper is organized as follows. In Section~\ref{sec:prelims}, we recall some background material on BNSR-invariants and McCool groups and set the stage for inspecting BNSR-invariants of McCool groups. In Section~\ref{sec:empty}, we prove parts (b) and (d) of Theorem~\ref{thm:mainPSA}, about BNSR-invariants being empty. In Section~\ref{sec:dense}, we prove parts (a) and (c), about BNSR-invariants being dense. In Section~\ref{sec:Sigma2}, we prove our further results about $\Sigma^2$, stemming from Meinert's criterion, and then in Appendix~\ref{app:meinert}, we provide a proof of Meinert's criterion.

\subsection*{Acknowledgments} The authors are grateful to Sam Hughes, Dawid Kielak, and Lorenzo Ruffoni for helpful discussions and pointers to references. We also thank the referee for many helpful suggestions. MZ was supported for part of the time spent on this project by Simons grant \#635763.

\section{Preliminaries}\label{sec:prelims}

In this section we recall some background material on BNSR-invariants in general and on the groups $\PSA_n$ and $\PSO_n$ that will be our main objects of interest.

\subsection{BNSR-invariants}\label{ssec:invariants}

In this subsection, we recall the definition and some important properties of the Bieri--Neumann--Strebel--Renz (BNSR) invariants of a group.
The invariants are somewhat difficult to define and usually quite difficult to compute, but reveal a wealth of information about the group. Let us gradually build up their definition.

Recall that a \emph{classifying space} for a group $G$ is a connected CW-complex whose fundamental group is isomorphic to $G$ and whose homotopy groups vanish in all dimensions higher than $1$. Equivalently, it is a quotient of a connected CW-complex whose homotopy groups in positive dimensions are trivial by a free cellular action of $G$.

A group is said to be of \emph{type $F_n$} if it admits a classifying space with finite  $n$-skeleton. Every group is of type $\F_0$, finite generation is equivalent to type $\F_1$, and finite presentability is equivalent to type $\F_2$. A group is of \emph{type $\F_\infty$} if it is of type $\F_n$ for all $n$, and if a group has a finite classifying space, it is said to be of \emph{type $F$}.

Throughout the paper, by a \emph{character} of a group $G$ we will mean a homomorphism $\chi\colon G\to\R$ from $G$ to the additive group of real numbers. Two characters are \emph{equivalent} if they differ by multiplication by a positive scalar. The equivalence classes $[\chi]$ of non-trivial characters of $G$ form the \emph{character sphere} of $G$, denoted $\sphere(G)$.

This sphere should be viewed as the boundary of the Euclidean vector space $\Hom(G,\R)$. If this space is $d$-dimensional, then the character sphere of $G$ is $(d-1)$-dimensional. BNSR-invariants of $G$ are certain subsets of $\sphere(G)$, which can be defined in several slightly different ways. The definition below is close, but not identical, to the original definition from \cite{renz_thesis}.
\vskip .15cm

Let $G$ be a group of type $\F_n$, so by definition it has a classifying space with finite $n$-skeleton. Choose any such classifying space $K$, let $\widetilde{K}$ be the universal cover of $K$, and let $X$ be the $n$-skeleton of $\widetilde{K}$, with the canonical $G$-action. It is not hard to show that for any non-trivial character $\chi$ of $G$ there exists a map $h_\chi\colon X \to \R$ (called a \emph{height function}) such that $h_\chi(g.x)=\chi(g) + h_\chi(x)$ for all $g\in G$ and $x\in X$. For $t\in\R$, define $X^{h_\chi\ge t}$ to be the subcomplex of $X$ consisting of all cells whose vertices lie in $h_\chi^{-1}([t,\infty))$.

\begin{definition}[BNSR-invariants]
\label{def:BNSR-invariants}
Let $n$, $G$ and $X$ be as above. For any $0\leq m\leq n$, the $m$th \emph{Bieri--Neumann--Strebel--Renz (BNSR) invariant} $\Sigma^m(G)$ is the subset of $\sphere(G)$ consisting of all $[\chi]$ for which the filtration $X^{h_\chi\ge t}$ of $X$ is \emph{essentially $(m-1)$-connected}, meaning that for every $t$ there exists $u\le t$ such that the inclusion $X^{h_\chi\ge t}\to X^{h_\chi \ge u}$ induces the trivial map in all homotopy groups up to dimension $m-1$.
\end{definition}

Note that by construction, $X$ in Definition~\ref{def:BNSR-invariants} is $(n-1)$-connected, so all $k$-spheres in $X$ can be filled for any $k\le n-1$, but what the condition $[\chi]\in\Sigma^m(G)$ measures is whether they can be filled without causing the $\chi$ value to drop too far in the negative direction somewhere along the filling disk. This definition is independent of the choices of $X$ and $h_\chi$; see for example \cite[Theorem~12.1]{bieri03}.
\vskip .12cm

\begin{remark}
\label{rem:BNSR-invariants}~
\begin{itemize}
\item[(a)] One can compute BNSR-invariants using more general $G$-complexes. For instance, one can take $X$ to be any $(n-1)$-connected CW-complex on which $G$ acts properly, cocompactly, and cellularly (see, e.g. \cite[Definition~1.1]{zaremsky18}); for an even more general condition see \cite[Definition~8.1]{bux04}.
\item[(b)] For any $X$ and $\chi$ as in Definition~\ref{def:BNSR-invariants}, to prove that the filtration $X^{h_\chi\ge t}$ is essentially 
$(m-1)$-connected it suffices to check the corresponding condition just for $t=0$. This follows easily from the fact that $g.X^{h_\chi\ge t} = 
X^{h_\chi\ge {t+\chi(g)}}$ for all $g\in G$ and $t\in \mathbb R$.
\item[(c)] The original definition from \cite{renz_thesis} was as follows: $[\chi]\in\Sigma^m(G)$ if there exists some $X$ as in Definition~\ref{def:BNSR-invariants} such that the subcomplex $X^{h_\chi\ge t}$ is $(m-1)$-connected for all $t\in\mathbb R$. Note that unlike essential $(m-1)$-connectivity, this condition does depend on the choice on $X$. The equivalence of this definition and Definition~\ref{def:BNSR-invariants} was established already in \cite{renz_thesis}; see also \cite[\S~2]{Meinert97} and \cite[\S~2]{llosa_isenrich}.
\end{itemize}
\end{remark}
If $n=1$, a natural choice for $X$ in Definition~\ref{def:BNSR-invariants} is the Cayley graph $\Cay(G,S)$ with respect to some
finite generating set $S$. Likewise, if $n=2$, so that $G$ is finitely presented, we can take $X$ to be the Cayley complex associated to a finite presentation of $G$.

If $n=1$ and $X=\Cay(G,S)$ for a finite generating set $S$, there is no difference between connectivity and essential connectivity (for any $S$);
in fact in this case the following are equivalent for a nonzero character $\chi$:
\begin{itemize}
\item[(a)] $[\chi]\in \Sigma^1(G)$;
\item[(b)] $X^{h_\chi \geq t}$ is connected for all $t$;
\item[(c)] $X^{h_\chi \geq t}$ is connected for some $t$.
\end{itemize}
It is clear that (b) implies (a). That (c) implies (b) is proved, e.g., in \cite[Lemma~A2.9]{strebel_book}. Finally, to prove that (a) implies (c) note that for any $u\le t$, any vertex of $X^{h_\chi \geq u}$ is connected by a path in $X^{h_\chi \geq u}$ to a vertex in $X^{h_\chi \geq t}$ (just choose $s\in S^{\pm 1}$ with $\chi(s)>0$ and multiply by $s^n$ on the right for sufficiently large $n$); hence if $X^{h_\chi \geq t}$ is contained in a connected subset of $X^{h_\chi \geq u}$, then the entire $X^{h_\chi \geq u}$ is connected.

\vskip .1cm

If $G$ is of type $\F_\infty$, then $\Sigma^m(G)$ is defined for all $m$, and we have a nested sequence
\[
\sphere(G) \supseteq \Sigma^1(G) \supseteq \Sigma^2(G) \supseteq\cdots \text{.}
\]
The intersection of all the $\Sigma^m(G)$ is denoted $\Sigma^\infty(G)$. Note that if $G$ is of type $\F$ with a finite classifying space of dimension $n$, then $\Sigma^n(G)=\Sigma^\infty(G)$ (for example this follows from \cite[Citation~1.5]{zaremsky17PB_n}). Thus, for groups of type $\F$ the above sequence stabilizes.

An important property of BNSR-invariants, proved in \cite{bieri87,renz_thesis}, is that $\Sigma^m(G)$ is open in $\sphere(G)$ for all $m<\infty$. Another key property is that $\Sigma^m(G)$ is invariant under the action of $\Aut(G)$, that is, if $\alpha$ is an automorphism of $G$, then for a character $\chi$ we have $[\chi\circ\alpha]\in\Sigma^m(G)$ if and only if $[\chi]\in\Sigma^m(G)$. This easily follows from the fact that $\Sigma^m(G)$ in Definition~\ref{def:BNSR-invariants} does not depend on the choice of $G$-complex $X$ or the choice of $h_\chi$, so in particular one can keep the same complex $X$, but ``twist'' the action by $\alpha$.

\vskip .1cm

The main application of BNSR-invariants is the following theorem of Renz (\cite[Satz~C]{renz_thesis}; see also Theorem~1.1 of \cite{bieri10}).

\begin{theorem}\cite[Satz~C]{renz_thesis}\label{thrm:bnsr_fin_props}
Let $G$ be a group of type $\F_n$ and $H$ a subgroup of $G$ containing the commutator subgroup $[G,G]$. Then $H$ is of type $\F_n$ if and only if $[\chi]\in\Sigma^n(G)$ for all $0\ne \chi \in \Hom(G,\R)$ such that $\chi(H)=0$.
\end{theorem}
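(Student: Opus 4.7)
My plan is to translate between the BNSR condition on $G$ and Brown's filtered criterion for the finiteness property $\F_n$ of $H$, mediated by the quotient $A := G/H$. Since $G$ is finitely generated and $H \supseteq [G,G]$, $A$ is a finitely generated abelian group. The characters $\chi \in \Hom(G,\R)$ with $\chi(H)=0$ are precisely those factoring through $A$, so their non-trivial classes form a rational subsphere $S(G,H) \subseteq \sphere(G)$ canonically identified with $\sphere(A)$. Fix a classifying space for $G$ with finite $n$-skeleton, and let $X$ be the $n$-skeleton of its universal cover, so $X$ is $(n-1)$-connected and carries a free cellular $G$-action with finite quotient. Then $Y := X/H$ is $(n-1)$-connected, and $A$ acts freely and cellularly on $Y$ with finite quotient $Y/A = X/G$; moreover each $\bar\chi : A \to \R$ induces an $A$-equivariant height function on $Y$ compatible with $h_\chi$ on $X$.

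For the direction $H$ of type $\F_n \Rightarrow S(G,H) \subseteq \Sigma^n(G)$, fix $[\chi] \in S(G,H)$. Since $\chi|_H = 0$, $H$ preserves every level set of $h_\chi$, and each sublevel set $X^{h_\chi \ge t}$ carries a cocompact $H$-action (translation by $G$ shifts $h_\chi$ by $\chi(g)$, but $H$ preserves the height). Using this cocompactness together with a $K(H,1)$ of finite $n$-skeleton, one can fill any $\pi_k$-class in $X^{h_\chi \ge t}$ (for $k \le n-1$) inside some $X^{h_\chi \ge u}$ with $u$ depending only on $t$, giving $[\chi] \in \Sigma^n(G)$.

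For the converse $S(G,H) \subseteq \Sigma^n(G) \Rightarrow H$ of type $\F_n$, I would verify Brown's criterion for the $H$-action on $X$. Since $\Sigma^n(G)$ is open and $S(G,H)$ is compact, a compactness argument yields finitely many characters $\chi_1,\dots,\chi_r$ with $[\chi_i] \in S(G,H)$ that positively span $\Hom(A,\R)$, together with a common shift $c > 0$ such that for every $s$, the inclusion $X^{h_{\chi_i}\ge s} \hookrightarrow X^{h_{\chi_i}\ge s-c}$ induces the trivial map on $\pi_k$ for $k \le n-1$. The intersection $X_t := \bigcap_i X^{h_{\chi_i} \ge -t}$ is then $H$-cocompact (by positive spanning, the set of $a \in A$ where all $\chi_i(a) \ge -t - \text{const}$ is bounded), the $X_t$ exhaust $X$ as $t \to \infty$, and by gluing fillings across the different characters one deduces essential $(n-1)$-connectivity of the exhaustion $\{X_t\}$. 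Brown's criterion then yields that $H$ is of type $\F_n$.

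The main obstacle is the gluing step in the converse: single-character BNSR conditions provide individual fillings $\Phi_i : D^{k+1} \to X^{h_{\chi_i} \ge u_i}$ of a given sphere, but these must be combined into a single filling whose image lies in the common sublevel set $\bigcap_i X^{h_{\chi_i} \ge -T}$. This requires careful interpolation across the regions where different characters become dominant, controlled by inductive subdivision of the filling disk, and is the technical heart of the Bieri--Renz and Renz arguments.
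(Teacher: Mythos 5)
This statement is not proved in the paper at all: it is quoted from the literature (Satz~C of Renz's thesis, restated as Theorem~1.1 of Bieri--Geoghegan--Kochloukova), so there is no internal proof to compare yours against. Judged on its own terms, your sketch has a genuine error in the forward direction. You assert that when $\chi(H)=0$ the sublevel set $X^{h_\chi\ge t}$ ``carries a cocompact $H$-action.'' It does carry an $H$-action, since $H$ preserves $h_\chi$, but that action is essentially never cocompact: the quotient $X^{h_\chi\ge t}/H$ is the set $Y^{\bar h\ge t}$ inside $Y=X/H$, and it contains the translates $a.K$ of a compact fundamental domain $K$ for the $A$-action on $Y$ for every $a\in A$ with $\bar\chi(a)$ sufficiently large --- an unbounded family whenever $\chi\ne 0$. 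Concretely, for $G=\Z^2$, $H=\Z\times 0$, $\chi$ the second coordinate and $X=\R^2$, the quotient $X^{h_\chi\ge t}/H$ is $S^1\times[t,\infty)$. Since your filling argument for the implication ``$H$ of type $\F_n\Rightarrow[\chi]\in\Sigma^n(G)$'' hangs entirely on this cocompactness (together with an unexplained transfer of fillings from a $K(H,1)$ back into $X$), that direction is not established. The standard routes are different: one either builds a $G$-complex adapted to the extension $1\to H\to G\to A\to 1$ from finite $n$-skeleta for $H$ and for the finitely generated abelian group $A$, or one uses Renz's characterization of $\Sigma^n$ in terms of the monoid $G_{\chi\ge 0}$.

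Your converse direction follows the correct strategy: choose finitely many classes $[\chi_i]$ positively spanning the subsphere, observe that $\bigcap_i X^{h_{\chi_i}\ge -t}$ is $H$-cocompact and that these sets exhaust $X$, and invoke Brown's criterion. But, as you yourself acknowledge, the step that makes this work --- converting the separate essential-connectivity statements for the individual $\chi_i$ into essential $(n-1)$-connectivity of the intersected filtration, i.e.\ producing one filling that respects all the height functions simultaneously --- is precisely the technical core of the Bieri--Renz/Renz argument, and you have deferred it rather than supplied it. As written, the proposal is therefore an outline with one direction resting on a false claim and the other missing its essential step.
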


For example, $[G,G]$ itself is of type $\F_n$ if and only if $\Sigma^n(G)=\sphere(G)$. At the other extreme in a sense, if $0\ne \chi\colon G\to\R$ is \emph{discrete}, meaning its image is cyclic, then $\ker(\chi)$ is of type $\F_n$ if and only if $[\chi],[-\chi]\in\Sigma^n(G)$. In practice, many groups have BNSR-invariants where one does not need to worry about checking both $\chi$ and $-\chi$, since they cannot have different behavior. Let us write
\[
-\Sigma^m(G)= \{[-\chi]\mid [\chi]\in\Sigma^m(G)\}\text{,}
\]
and make the following definition:

\begin{definition}[Symmetric]\label{def:symmetric}
We say that the BNSR-invariants of a group $G$ are \emph{symmetric} if $\Sigma^m(G)=-\Sigma^m(G)$ for all $m$. For example, if there is an automorphism $\omega$ of $G$ such that $\chi\circ\omega=-\chi$ for all $\chi$, then the BNSR-invariants of $G$ are symmetric.
\end{definition}

\subsection{McCool groups}\label{ssec:groups}

In this subsection we introduce our groups of interest. Let $F_n$ be the free group of rank $n$, with a fixed basis $\{x_1,\dots,x_n\}$. Consider the group $\Aut(F_n)$ of automorphisms of $F_n$. Denote by $\Inn(F_n)$ the subgroup of inner automorphisms, and let $\Out(F_n)=\Aut(F_n)/\Inn(F_n)$ be the group of outer automorphisms. An automorphism of $F_n$ is called \emph{symmetric} if it sends each $x_i$ to a conjugate of some $x_j$, and \emph{pure symmetric} if it sends each $x_i$ to a conjugate of $x_i$. Denote the group of all symmetric automorphisms of $F_n$ by $\SA_n$, and the group of all pure symmetric automorphisms of $F_n$ by $\PSA_n$. Since all inner automorphisms are pure symmetric, we have
\[
\Inn(F_n) \le \PSA_n \le \SA_n \le \Aut(F_n) \text{.}
\]
Taking the quotient by $\Inn(F_n)$, we obtain ``outer'' versions:
\[
\PSO_n\le \SO_n \le \Out(F_n)\text{.}
\]

\medskip

In \cite{mccool86}, McCool exhibited a natural finite presentation for $\PSA_n$. There is a generator $\alpha_{ij}$ for each $1\le i,j\le n$ with $i\ne j$, and the defining relations are as follows:
\begin{itemize}
\item $[\alpha_{ij},\alpha_{k\ell}]=1$ for $i,j,k,\ell$ distinct,
\item $[\alpha_{ij},\alpha_{kj}]=1$ for $i,j,k$ distinct,
\item $[\alpha_{ij}\alpha_{kj},\alpha_{ik}]=1$ for $i,j,k$ distinct.
\end{itemize}
The generator $\alpha_{ij}$ is the automorphism of $F_n$ sending $x_i$ to $x_i^{x_j}=x_j^{-1}x_i x_j$ and fixing all $x_k$ for $k\ne i$. We will generally refer to $\PSA_n$ and $\PSO_n$ as \emph{McCool groups}. Note that $\PSA_1$ is trivial, so when talking about McCool groups we will often implicitly assume $n\ge 2$, which will avoid some pathologies.

Thanks to the second relation, if $I\subseteq \{1,\dots,n\}$ and $j\in \{1,\dots,n\}\setminus I$, then $\alpha_{ij}$ and $\alpha_{i'j}$ commute for all $i,i'\in I$. In particular it makes sense to define
\[
\alpha_{I,j} = \prod\limits_{i\in I}\alpha_{ij} \text{,}
\]
i.e., the order of the product does not matter. The automorphisms $\alpha_{I,j}$ are called the \emph{Whitehead generators} of $\PSA_n$. We say that $\alpha_{I,j}$ is \emph{based} at $j$. Note that $\Inn(F_n)$ is the subgroup of $\PSA_n$ generated by all the $\alpha_{[n]\setminus\{j\},j}$ for $1\le j\le n$, where
\[
[n] = \{1,\dots,n\}\text{.}
\]

\medskip
We will use the following basic relations between the Whitehead generators, which are all clear from the definition:

\begin{lemma}\label{lem:Whiteheadrelations}
The following hold:
\begin{itemize}
\item[(i)] $\alpha_{I,j}\,\alpha_{K,j}=\alpha_{I\cup K,j}$ whenever $I$ and $K$ are disjoint;
\item[(ii)] $[\alpha_{I,j},\alpha_{K,j}]=1$;
\item[(iii)] $[\alpha_{I,j},\alpha_{K,m}]=1$ if either $I\cup \{j\}$ and $K\cup \{m\}$ are disjoint or
$I\cup \{j\}\subseteq K$.\qed
\end{itemize}
\end{lemma}

The McCool groups have some important topological properties that we will need to use later:

\begin{proposition}\label{prop:collins}
The group $\PSA_n$ is torsion-free and admits a free simplicial cocompact action on an $(n-1)$-dimensional contractible simplicial complex. Hence it is of type $\F$. The group $\PSO_n$ has the same properties, with $n-1$ replaced by $n-2$.
\end{proposition}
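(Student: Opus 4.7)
My plan is to invoke the McCullough--Miller space (Definition~\ref{def:mm} below) and then bootstrap from $\PSO_n$ to $\PSA_n$. The McCullough--Miller construction furnishes a contractible simplicial complex $\mathcal{MM}_n$ of dimension $n-2$ on which $\PSO_n$ acts simplicially, freely, and cocompactly; this immediately yields the required complex $X$ for $\PSO_n$, and the compact quotient $\mathcal{MM}_n/\PSO_n$ is a finite $K(\PSO_n,1)$ of dimension $n-2$. Torsion-freeness of $\PSO_n$ then follows formally: any nontrivial finite subgroup would act freely on the finite-dimensional contractible complex $\mathcal{MM}_n$, producing a finite-dimensional $K(G,1)$ for a nontrivial finite group $G$, and contradicting the classical fact that such groups have infinite cohomological dimension.

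For $\PSA_n$, I would exploit the short exact sequence
\[
1 \to \Inn(F_n) \to \PSA_n \to \PSO_n \to 1,
\]
noting that $\Inn(F_n) \cong F_n$ since $Z(F_n) = 1$ for $n \geq 2$. The base $\mathcal{MM}_n/\PSO_n$ is a finite CW $K(\PSO_n,1)$ of dimension $n-2$, and I would realize a classifying space for $\PSA_n$ as the total space of a fiber bundle over this base with fiber $BF_n$, taken to be a rose of $n$ circles (dimension $1$). After simplicial subdivision, this total space is a finite simplicial $K(\PSA_n,1)$ of dimension $(n-2)+1=n-1$; its universal cover is the required contractible $(n-1)$-dimensional simplicial complex carrying a free, simplicial, cocompact $\PSA_n$-action. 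Torsion-freeness of $\PSA_n$ is then inherited from the extension, since both the kernel $\Inn(F_n)\cong F_n$ and the quotient $\PSO_n$ are torsion-free.

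The main obstacle, such as it is, lies not in the assembly above but in establishing the properties of $\mathcal{MM}_n$ itself --- namely contractibility, freeness of the $\PSO_n$-action, and cocompactness. These are the content of McCullough--Miller's work and will be reviewed together with Definitions~\ref{def:whitehead} and~\ref{def:mm}, at which point the present proposition reduces to the simplicial bundle construction described above.
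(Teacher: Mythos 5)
Your proposal has a fatal flaw in its foundation: the McCullough--Miller space $\MMO_n$ does \emph{not} carry a free $\PSO_n$-action. As the paper itself defines it (Definition~\ref{def:mm}), $\MMO_n = \calF(\Cos(\PSO_n,\mathcal{HO}_n))$, so $\PSO_n$ acts on the vertex set by left translation on cosets, and the stabilizer of a vertex $gH$ is $gHg^{-1}$. Since $\mathcal{HO}_n$ contains non-trivial (indeed, free abelian of rank up to $n-2$) subgroups $H(T)$, generic vertices have infinite stabilizers. The action is simplicial and cocompact, but it is neither free nor even proper. Consequently your ``formal'' deduction of torsion-freeness of $\PSO_n$ — which relies on having a finite-dimensional contractible complex with a \emph{free} action — collapses, and the quotient $\MMO_n/\PSO_n$ is not a $K(\PSO_n,1)$. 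Everything downstream (the bundle construction for $\PSA_n$, its dimension count, and inheriting torsion-freeness from the extension) is built on this false premise.

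The paper's route is structurally different and avoids this trap. It invokes a 1989 result of Collins for a \emph{proper} (finite-stabilizer), simplicial, cocompact action of the larger groups $\SA_n$ and $\SO_n$ on complexes of dimensions $n-1$ and $n-2$; this is not the McCullough--Miller space (which only appeared in 1996). It then gets torsion-freeness of $\PSA_n$ and $\PSO_n$ as an independent input, via Baumslag--Taylor, and combines the two: restricting Collins' proper action to a torsion-free finite-index subgroup yields a free cocompact action. Note the logical order: torsion-freeness is an ingredient, not a consequence, of obtaining the free action. If you want to salvage a McCullough--Miller–style argument, you would need some device to kill the abelian stabilizers (a Borel-type construction, or passing to an associated complex with free action), but that is substantially more work than citing Collins.
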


\begin{proof}
Collins \cite{collins89} proved that the groups $\SA_n$ and $\SO_n$ admit proper simplicial cocompact actions on contractible simplicial complexes of dimensions $n-1$ and $n-2$, respectively. On the other hand, $\PSA_n$ and $\PSO_n$ are torsion-free by a result of Baumslag and Taylor \cite{baumslag68}. Since a proper action of a torsion-free group must be free, restricting the above actions of $\SA_n$ and $\SO_n$ to their finite index subgroups $\PSA_n$ and $\PSO_n$, we obtain free actions which remain cocompact.
\end{proof}

\subsection{Setting the stage for BNSR-invariants of McCool groups}\label{ssec:prior_work}

We can already begin to see some facts relevant to the BNSR-invariants by looking at the McCool presentation. First we can compute abelianizations. Since each defining relation is a commutator, the abelianization of $\PSA_n$ is freely generated by the images of the $\alpha_{ij}$ and thus is isomorphic to $\Z^{n(n-1)}$. Hence $\Hom(\PSA_n,\R)\cong \R^{n(n-1)}$, and so $\sphere(\PSA_n)\cong S^{n(n-1)-1}$. As for $\PSO_n$, in the abelianization we take $\Z^{n(n-1)}$ generated by the images $\overline{\alpha}_{ij}$ of the $\alpha_{ij}$, and we mod out $n$ independent relations, of the form $\overline{\alpha}_{[n]\setminus\{j\},j}=0$ for each $j$. Thus the abelianization of $\PSO_n$ is isomorphic to $\Z^{n(n-1)-n}=\Z^{n(n-2)}$, so $\sphere(\PSO_n)\cong S^{n(n-2)-1}$.

\begin{lemma}\label{lem:symmetric_bnsr}
The BNSR-invariants of $\PSA_n$ and $\PSO_n$ are symmetric.
\end{lemma}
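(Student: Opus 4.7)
The plan is to apply the sufficient condition from Definition~\ref{def:symmetric}: construct an automorphism $\omega$ of each McCool group that acts as $-1$ on the abelianization, so that $\chi\circ\omega = -\chi$ for every character $\chi$. The natural candidate comes from the inversion automorphism $\iota\in\Aut(F_n)$ defined by $\iota(x_i) = x_i^{-1}$ for all $i$. I would let $\omega$ be the automorphism of $\Aut(F_n)$ given by conjugation by $\iota$, and check that it restricts to an automorphism of $\PSA_n$ and descends to $\PSO_n$.

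First I would verify that conjugation by $\iota$ preserves the property of being pure symmetric: if $\phi\in\PSA_n$ with $\phi(x_i) = w_i^{-1}x_iw_i$, then a short computation shows $\iota\phi\iota^{-1}(x_i) = \iota(w_i)^{-1}x_i\iota(w_i)$, which is again a conjugate of $x_i$, so $\iota\phi\iota^{-1}\in\PSA_n$. Next I would compute the action on the McCool generators: since $\alpha_{ij}(x_i) = x_j^{-1}x_ix_j$, the formula above gives $\iota\alpha_{ij}\iota^{-1}(x_i) = x_jx_ix_j^{-1} = \alpha_{ij}^{-1}(x_i)$, while $\iota\alpha_{ij}\iota^{-1}$ fixes all other generators. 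Hence $\omega(\alpha_{ij}) = \alpha_{ij}^{-1}$ for every generating $\alpha_{ij}$.

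Because the $\alpha_{ij}$ generate $\PSA_n$ and their images generate the abelianization, this immediately gives that $\omega$ acts as $-1$ on the abelianization of $\PSA_n$, so $\chi\circ\omega = -\chi$ for every $\chi\in\Hom(\PSA_n,\R)$. By Definition~\ref{def:symmetric} this proves the statement for $\PSA_n$. For $\PSO_n$, I would observe that $\iota c_g \iota^{-1} = c_{\iota(g)}$ for any $g\in F_n$, so conjugation by $\iota$ preserves $\Inn(F_n)$ and $\omega$ therefore descends to an automorphism $\overline{\omega}$ of $\PSO_n$. Since the abelianization of $\PSO_n$ is a quotient of that of $\PSA_n$ and $\omega$ acts as $-1$ on the latter, $\overline{\omega}$ also acts as $-1$ on the abelianization of $\PSO_n$, completing the argument.

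There is no real obstacle here: the only minor bookkeeping is checking that conjugation by $\iota$ genuinely lands in $\PSA_n$ (rather than just in $\SA_n$) and that it really inverts each $\alpha_{ij}$ rather than sending it to some other conjugate, both of which reduce to one-line computations with the explicit formula $\alpha_{ij}(x_i) = x_j^{-1}x_ix_j$.
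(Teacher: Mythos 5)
Your proof is correct. The core idea is the same as the paper's: produce an automorphism $\omega$ of $\PSA_n$ sending each McCool generator $\alpha_{ij}$ to $\alpha_{ij}^{-1}$, so that $\chi\circ\omega=-\chi$ for every character $\chi$, and observe that $\omega$ descends to $\PSO_n$. The paper (following \cite[Observation~2.3]{zaremsky18}) simply \emph{reads off} the existence of such an $\omega$ from the McCool presentation, since each defining relator is visibly preserved under $\alpha_{ij}\mapsto\alpha_{ij}^{-1}$. You instead \emph{exhibit} $\omega$ concretely as conjugation by the inversion automorphism $\iota\colon x_i\mapsto x_i^{-1}$ inside $\Aut(F_n)$, and verify directly that $\iota\alpha_{ij}\iota^{-1}=\alpha_{ij}^{-1}$. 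Your route has the small advantage that it sidesteps checking that the three families of McCool relators (in particular the slightly less obvious $[\alpha_{ij}\alpha_{kj},\alpha_{ik}]=1$) are preserved under inversion of generators, and it makes the descent to $\PSO_n$ transparent since conjugation by $\iota$ visibly normalizes $\Inn(F_n)$. The cost is a little bookkeeping with the explicit formulas, which you carried out correctly. Both arguments are short and each confirms the other.
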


\begin{proof}
For $\PSA_n$ this is \cite[Observation~2.3]{zaremsky18}, and the argument for $\PSO_n$ is analogous. To spell it out, from the McCool presentation we see that there is an automorphism $\omega\in \Aut(\PSA_n)$ sending each $\alpha_{ij}$ to $\alpha_{ij}^{-1}$, which satisfies $\chi\circ \omega = -\chi$ for every character $\chi$ of $\PSA_n$, and the induced automorphism $\overline{\omega}$ of $\PSO_n$ has the same property.
\end{proof}

As we already mentioned, in \cite{orlandikorner00} Orlandi-Korner fully computed the first BNSR-invariant $\Sigma^1(\PSA_n)$ for all $n$. In \cite{zaremsky18}, the second author gave a partial computation of $\Sigma^m(\PSA_n)$ for $m\ge 2$, and in particular proved that all ``positive'' (and all ``negative'') characters of $\PSA_n$ lie in $\Sigma^{n-2}(\PSA_n)\setminus \Sigma^{n-1}(\PSA_n)$.

\begin{remark}
One can also ask about the BNSR-invariants of the full (as opposed to pure) symmetric automorphism groups $\SA_n$ and $\SO_n$, but this turns out to be a much easier question since these groups have very low dimensional character spheres. We have $\Hom(\SA_n,\R)\cong \R$, so $\sphere(\SA_n)=S^0$, and $\Sigma^k(\SA_n)$ is all of $\sphere(\SA_n)$ for $k\le n-2$ and empty for $k\ge n-1$ \cite[Theorem~B]{zaremsky18}. Even more extreme, $\Hom(\SO_n,\R)$ is trivial, since any non-trivial map $\SA_n\to\R$ must restrict non-trivially to $\Inn(F_n)$.
\end{remark}

\section{Empty BNSR-invariants}\label{sec:empty}

In this section we prove parts (b) and (d) of Theorem~\ref{thm:mainPSA}, which assert that certain invariants are empty. This is the easier part; indeed, it was arguably already known to experts, although some pieces of proof need to be assembled.

Let $G$ be a group such that all the rational homology groups $H_i(G,\Q)$ of $G$ are finite-dimensional and vanish for large enough $i$. For example this happens if $G$ is of type $\FP$, meaning the trivial $G$-module $\Z$ admits a finite-length resolution by finitely generated projective $G$-modules. In particular, this happens if $G$ is of type $\F$. The \emph{Euler characteristic} $\euler(G)$ of $G$ is
\[
\euler(G) = \sum\limits_{i=0}^\infty (-1)^i \dim H_i(G,\Q) \text{.}
\]

The following result must be well known, but we could not find it stated in this concise form in the literature; the proof is reasonably elementary.

\begin{proposition}\label{prop:euler}
Let $G$ be a group of type $\F$ such that $\euler(G)\ne 0$. Then $\Sigma^\infty(G)\cap -\Sigma^\infty(G)=\emptyset$, so in particular if the BNSR-invariants of $G$ are symmetric, then $\Sigma^\infty(G)=\emptyset$.
\end{proposition}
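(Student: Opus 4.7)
The plan is to assume for contradiction that $\Sigma^\infty(G)\cap -\Sigma^\infty(G)\ne\emptyset$ and deduce $\euler(G)=0$. Since $G$ is of type $\F$, fix a finite classifying space of some dimension $n$; then $\Sigma^\infty(G)=\Sigma^n(G)$, and this set is open in $\sphere(G)$. As the classes of discrete characters (those with cyclic image) are dense in $\sphere(G)$, after moving inside the open set $\Sigma^n(G)\cap -\Sigma^n(G)$ I may select $[\chi]$ there with $\chi$ discrete, so that $\chi(G)\cong\Z$ and both $[\chi]$ and $[-\chi]$ still lie in $\Sigma^n(G)$.

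Set $N=\ker\chi$. Any nonzero $\psi\in\Hom(G,\R)$ vanishing on $N$ factors through $G/N\cong\Z$, and is thus a nonzero real scalar multiple of $\chi$, so $[\psi]\in\{[\chi],[-\chi]\}\subseteq\Sigma^n(G)$. Theorem~\ref{thrm:bnsr_fin_props} then yields that $N$ is of type $\F_n$, and in particular of homological type $\FP_n$.

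The next step is to upgrade $N$ to be of type $\FP$, so that $\euler(N)$ is defined. Since $G$ has an $n$-dimensional finite $K(G,1)$, its cohomological dimension is at most $n$, and hence so is $\mathrm{cd}(N)$. A standard theorem of Bieri then states that type $\FP_n$ combined with cohomological dimension at most $n$ forces type $\FP$, which gives the desired upgrade for $N$.

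Finally, the short exact sequence $1\to N\to G\to\Z\to 1$ has all three groups of type $\FP$, so multiplicativity of the Euler characteristic in extensions (e.g.\ via the Serre spectral sequence in homology) yields $\euler(G)=\euler(N)\cdot\euler(\Z)=0$, contradicting $\euler(G)\ne 0$. This proves the first assertion. The second assertion is immediate: if the BNSR-invariants are symmetric, then $\Sigma^\infty(G)=-\Sigma^\infty(G)$, so the first assertion forces $\Sigma^\infty(G)=\emptyset$. I expect the only mildly technical point to be the upgrade from type $\F_n$ to type $\FP$ for $N$; once that dimension-bound result is in hand, the argument is a clean assembly of the Bieri--Renz finiteness criterion with multiplicativity of $\euler$.
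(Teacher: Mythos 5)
Your proof is correct and follows essentially the same route as the paper's: pick a discrete $\chi$ in the open set $\Sigma^n(G)\cap-\Sigma^n(G)$, apply the Bieri--Renz criterion to conclude $\ker\chi$ is of type $\F_n$, upgrade to type $\FP$ via the cohomological dimension bound (Brown, Prop.~VIII.6.1), and invoke multiplicativity of $\euler$ over $1\to N\to G\to\Z\to1$. You are in fact slightly more explicit than the paper in verifying the hypothesis of Theorem~\ref{thrm:bnsr_fin_props} (that every nonzero character killing $N$ is a scalar multiple of $\chi$); the two arguments are otherwise identical.
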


\begin{proof}
Suppose $\Sigma^\infty(G)\cap -\Sigma^\infty(G)\ne\emptyset$. Since $G$ is of type $\F$, $\Sigma^\infty(G)=\Sigma^m(G)$ where $m<\infty$ is the dimension of some finite classifying space for $G$, so $\Sigma^m(G)\cap -\Sigma^m(G)\ne\emptyset$. Since $\Sigma^m(G)$ is open in $\sphere(G)$, we can choose a discrete character $0\ne\chi\colon G\to\Z$ such that $[\pm\chi]\in\Sigma^m(G)=\Sigma^\infty(G)$. Let $K=\ker(\chi)$, so $K$ is of type $\F_\infty$. 

Since $G$ has a finite classifying space of dimension $m$, its cohomological dimension $cd(G)$ is at most $m$ and hence 
$cd(K)\leq cd(G)<\infty$ (see, e.g., \cite[Propositions~VIII.2.2,~VIII.2.4]{brown82}). Combined with $K$ being of type $\F_\infty$, hence $\FP_\infty$, this implies that $K$ is of type $\FP$ by \cite[Proposition~VIII.6.1]{brown82}. Now by \cite[Proposition~IX.7.3(d)]{brown82}, since $K$, $G$, and $\Z$ are all of type $\FP$, the Euler characteristic $\euler$ is multiplicative for the short exact sequence $1\to K\to G\to\Z\to 1$, and we get $\euler(G)=\euler(K)\cdot\euler(\Z)=\euler(K)\cdot 0=0$, a contradiction.
\end{proof}

\begin{remark}
The above proof applies equally well to the homological BNSR-invariant $\Sigma^\infty(G;\Z)$, since we only need to know that $K$ was of type $\FP_\infty$ (not necessarily $\F_\infty$). We should also mention that, as explained to us by Dawid Kielak, using some deep results from $\ell^2$-homology the assumption about the BNSR-invariants being symmetric can be relaxed, and the conclusion can be strengthened to saying that $\Sigma^\infty(G;\Q)=\emptyset$, where $\Sigma^\infty(G;\Q)$ is the homological BNSR-invariants with $\Q$ coefficients. This follows from Theorem~A of \cite{hughes_kielak}, together with the fact that for groups of type $\F$ the Euler characteristic and $\ell^2$-Euler characteristic coincide (see \cite[Theorem~6.80(1)]{lueck02}). For our purposes, we only need the result as stated in Proposition~\ref{prop:euler}.
\end{remark}

\medskip

We can now prove Theorem~\ref{thm:mainPSA}(b)(d), asserting that $\Sigma^{n-1}(\PSA_n)=\emptyset$ and $\Sigma^{n-2}(\PSO_n)=\emptyset$.

\begin{proof}[Proof of Theorem~\ref{thm:mainPSA}(b)(d)]
By Proposition~\ref{prop:collins}, $\PSA_n$ and $\PSO_n$ are of type $\F$. By Lemma~\ref{lem:symmetric_bnsr}, the BNSR-invariants of these groups are symmetric. Finally, it is known that $\euler(\PSA_n)\ne 0$ and $\euler(\PSO_n)\ne 0$; in fact, $\euler(\PSA_n)=(1-n)^{n-1} \ne 0$ and $\euler(\PSO_n)=(1-n)^{n-2}\ne 0$; see Theorems~1.2 and~8.2 of \cite{mccammond04}. Thus,
$\Sigma^{n-1}(\PSA_n)=\emptyset$ and $\Sigma^{n-2}(\PSO_n)=\emptyset$ by Proposition~\ref{prop:euler}.
\end{proof}

\section{Dense BNSR-invariants}\label{sec:dense}

In this section we prove the much more difficult, ``positive'' statements of Theorem~\ref{thm:mainPSA}, parts (a) and (c), asserting that $\Sigma^{n-2}(\PSA_n)$ and $\Sigma^{n-3}(\PSO_n)$ are dense in their respective character spheres. The general plan is to apply a criterion of Meier, Meinert and Van Wyk, with the bulk of the work relating to the  Whitehead poset $\WO_n$ and McCullough--Miller space $\MMO_n$ of $\PSO_n$, along with natural ``auter'' variants. In the coming subsections, we formulate the Meier--Meinert--Van Wyk criterion, discuss the Whitehead poset and McCullough--Miller space, and then prove the density results.

\subsection{Higher generation and the Meier--Meinert--Van Wyk criterion}\label{ssec:mmvk_criterion}

The key to proving that certain characters are in certain BNSR-invariants will be a general criterion due to Meier, Meinert, and Van Wyk~\cite{meier98} (see Theorem~\ref{thrm:MMV98} below), involving the concept of higher generation
due to Abels and Holz. We start with several definitions.

\smallskip
 
Let $G$ be a group, let $\calH$ be a family of subgroups of $G$, and let $\Cos(G,\calH)$ be the collection of all cosets $gH$ with $g\in G$ and $H\in\calH$. We will view both $\calH$ and $\Cos(G,\calH)$ as posets with respect to inclusion. Recall that given a poset $P$, the associated flag complex $\calF(P)$ is the simplicial complex whose simplices are finite chains of elements of $P$ (this is also known as the order complex). If $P$ has a global minimum element $\widehat{0}$, let us also write
\[
\calF_0(P)= \calF(P\setminus\{\widehat{0}\})\text{.}
\]
Below we will be concerned with two flag complexes associated to $G$ and $\calH$. The first is $\calF(\Cos(G,\calH))$, the flag complex of the poset of all cosets of subgroups in $\calH$. The second is $\calF_0(\calH)$, the flag complex of the poset of all non-trivial subgroups in $\calH$ (only the non-trivial subgroups, not all cosets thereof).

Let $\calU=\{U_i\mid i\in I\}$ be a covering of a set $X$. Recall that the \emph{nerve} of $\calU$, denoted by $N(\calU)$, is the simplicial complex with vertex set $I$ such that $i_0,\dots,i_p$ span a $p$-simplex whenever $U_{i_0}\cap\cdots\cap U_{i_p}$ is non-empty.

The following definition was introduced by Abels and Holz in \cite{abels93}.

\begin{definition}[$n$-generating]\label{def:hi_gen}
Let $\calH$ be family of subgroups of a group $G$, so $\Cos(G,\calH)$ is a covering of $G$. For $n\in\N$, the family $\calH$ is said to be \emph{$n$-generating} for $G$ if the nerve $N(\Cos(G,\calH))$ is $(n-1)$-connected, and 
\emph{$\infty$-generating} if the nerve is contractible.
\end{definition}

\begin{remark}\label{rmk:flag}
If $\calH$ is closed under finite intersections, then by \cite[Theorem~1.4(b)]{abels93}, $N(\Cos(G,\calH))$ is homotopy equivalent to the flag complex $\calF(\Cos(G,\calH))$. Thus, in this case we can replace $N(\Cos(G,\calH))$ in Definition~\ref{def:hi_gen} with the complex $\calF(\Cos(G,\calH))$, which is somewhat easier to understand, being the flag complex of a poset.
\end{remark}

We are now ready to state the general result of Meier--Meinert--Van Wyk from \cite{meier98} relating BNSR-invariants to highly generating families of subgroups:

\begin{theorem}\cite[Theorem~3.1]{meier98}\label{thrm:MMV98}
Let $G$ be a group of type $\F_n$. Let $\calH$ be a non-empty finite intersection-closed $n$-generating family of subgroups of $G$ such that each $H\in\calH$ is of type $\F_n$. Let $\chi\colon G \to\R$ be a character such that $\chi|_H\ne 0$ for each non-trivial subgroup $H\in \calH$. Suppose that the following hold:
\begin{itemize}
    \item[(i)] Either the trivial subgroup does not belong to $\calH$ or the simplicial complex $\calF_0(\calH)$ (as defined above) is $(n-1)$-connected, and
    \item[(ii)] $[\chi|_H]\in \Sigma^n(H)$ for all non-trivial $H\in\calH$.
\end{itemize}
Then $[\chi]\in\Sigma^n(G)$.
\end{theorem}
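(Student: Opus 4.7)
The plan is to exhibit a cocompact, $(n-1)$-connected $G$-CW complex $X$ whose cell stabilizers are all conjugates of elements of $\calH$, and then to verify essential $(n-1)$-connectivity of a height filtration on $X$ by combining the local data from hypothesis (ii) with the global shape coming from higher generation. Since $\calH$ is intersection-closed, Remark~\ref{rmk:flag} lets us work with the flag complex $Y \coloneq \calF(\Cos(G,\calH))$, which is $(n-1)$-connected by $n$-generation. The $G$-action on $Y$ is cocompact because $\calH$ is finite, and the stabilizer of a simplex $[g_0H_0 \subsetneq \cdots \subsetneq g_p H_p]$ is a conjugate of $H_0 \cap \cdots \cap H_p$, which again lies in $\calH$ by intersection-closedness, hence is of type $\F_n$.

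The next step is to thicken $Y$ to a ``total space'' $X$: for each $H \in \calH$ let $E_H$ be the $n$-skeleton of the universal cover of a finite $n$-dimensional $K(H,1)$, and replace each simplex of $Y$ with stabilizer $H'$ by a factor $E_{H'}$, gluing along faces in the natural way using the intersection-closed structure (a Borel-type construction over the coset complex). A standard nerve-of-covers argument, using that each $E_H$ is $(n-1)$-connected and that $Y$ itself is $(n-1)$-connected, then shows that $X$ is $(n-1)$-connected, cocompact, and has all cell stabilizers of type $\F_n$, so $X$ is a valid test space for $\Sigma^n(G)$. Using that $\chi|_H \neq 0$ for every non-trivial $H \in \calH$, choose an $H$-equivariant height function on each $E_H$ and assemble these $G$-equivariantly into a height function $h_\chi \colon X \to \R$; hypothesis (ii) then says precisely that the filtration $E_H^{h_{\chi|_H} \geq t}$ is essentially $(n-1)$-connected for each non-trivial $H$.

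Finally I would verify essential $(n-1)$-connectivity of $X^{h_\chi \geq t}$ by a nerve-of-a-cover/Mayer--Vietoris argument. The sublevel set is covered by pieces of the form $(g \cdot E_H) \cap X^{h_\chi \geq t}$, and the nerve of this cover is controlled by $\calF(\Cos(G, \calH))$ when $\{1\} \notin \calH$, or by $\calF_0(\calH)$ when $\{1\} \in \calH$; this is exactly the role of hypothesis (i), since degenerate ``trivial subgroup'' fibers (on which no nonzero height function exists) must be excised and the remaining index complex must still be $(n-1)$-connected. Each remaining piece is essentially $(n-1)$-connected by (ii), so a standard spectral sequence or nerve theorem combines the local essential connectivity with the global connectivity of the index complex to give essential $(n-1)$-connectivity of $X^{h_\chi \geq t}$, and hence $[\chi] \in \Sigma^n(G)$. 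The main obstacle I expect is the compatibility issue: the local height functions on the various $E_H$ must glue $G$-equivariantly, and the nerve of the restricted cover on each sublevel set must genuinely match the flag complex up to a bounded shift in $t$. This forces essential use of intersection-closedness, so that overlaps of fibers correspond to honest subgroups in $\calH$ whose height functions restrict compatibly to the overlap, and requires a careful coherent choice of coset representatives.
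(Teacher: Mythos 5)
This statement is not proved in the paper at all --- it is quoted verbatim from Meier--Meinert--Van Wyk \cite[Theorem~3.1]{meier98}, so there is no internal proof to compare against. Your sketch does reconstruct the strategy of the original argument: one works with the coset complex $\calF(\Cos(G,\calH))$, which is $(n-1)$-connected by $n$-generation and cocompact since $\calH$ is finite, whose simplex stabilizers are conjugates of intersections of members of $\calH$ (hence again members of $\calH$, of type $\F_n$, with $[\chi]$ in their $\Sigma^n$), and one then invokes a $\Sigma$-invariant analogue of Brown's criterion for group actions on highly connected complexes. Your reading of hypothesis~(i) is also essentially right: when $\{1\}\in\calH$ the vertices $g\{1\}$ have trivial stabilizer, on which $\chi$ cannot be controlled, and their links in the coset complex are copies of $\calF_0(\calH)$, so $(n-1)$-connectivity of $\calF_0(\calH)$ is exactly what permits excising them without damaging the connectivity of the ambient complex.

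The caveat is that the step you describe as ``a standard spectral sequence or nerve theorem combines the local essential connectivity with the global connectivity'' is where all the real work lives, and it is not standard in the form you invoke it: \emph{essential} $(n-1)$-connectivity of sublevel sets does not pass through Mayer--Vietoris or the nerve lemma the way genuine $(n-1)$-connectivity does, because the lag $u\le t$ must be controlled uniformly across all the pieces and all their overlaps, equivariantly. Making this precise is the content of the machinery in \cite[Definition~8.1]{bux04} and in Meinert's and Bieri--Geoghegan's criteria for actions on complexes, and it is also where the hypothesis that each stabilizer is of type $\F_n$ (not merely that $\chi$ is nonzero on it) gets used. Your Borel-type thickening of the coset complex is one legitimate way to package this, but as written the proposal defers the decisive estimate to an unnamed ``standard'' theorem; to turn the sketch into a proof you would need to either quote such a criterion explicitly or prove the uniform-lag statement by hand.
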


\begin{remark}
The condition that the trivial subgroup does not belong to $\calH$ is a strong constraint since $\calH$ needs to be closed under intersections. On the other hand, if the trivial subgroup does belong to $\calH$, the complex $\calF(\calH)$ is trivially contractible, which explains why we are looking at $\calF_0(\calH)$ rather than $\calF(\calH)$ in this case.
\end{remark}

\subsection{The Whitehead poset}\label{ssec:whitehead_poset}

In this subsection we recall the definition and prove some properties of the Whitehead poset $\WO_n$, generally following \cite{brady01}. The elements of $\WO_n$ can be viewed combinatorially as bipartite labeled trees on $[n] = \{1,2,\dots,n\}$, which we now define.

\begin{definition}
A \emph{bipartite labeled tree on $[n]$} is a tree that has $n$ vertices labeled by the integers from $1$ to $n$ and some finite number of unlabeled vertices, satisfying the following conditions:
\begin{itemize}
    \item[(i)] each edge connects a labeled vertex to an unlabeled vertex;
    \item[(ii)] each unlabeled vertex has degree at least $2$.
\end{itemize}
\end{definition}

The set of all bipartite labeled trees on $[n]$ will be denoted by $\WO_n$. We will now define a partial order on $\WO_n$. If $T\in \WO_n$ has a labeled vertex $v$ connected by edges to two unlabeled vertices $w_1$ and $w_2$, we can form a new tree $T'$ by identifying $w_1$ and $w_2$ and identifying the edges joining $w_1$ and $w_2$ to $v$. In this case we say that $T'$ is obtained from $T$ by a \emph{folding} at $v$. See Figure~\ref{fig:folding} for an example.

\begin{figure}[htb]
\centering
\begin{tikzpicture}[line width=1pt]

\draw (0.2,0.2) -- (1,1) -- (0.2,1.8)   (1,1) -- (4,1);
\filldraw (0.2,0.2) circle (1.5pt);
\filldraw (1,1) circle (1.5pt);
\filldraw (0.2,1.8) circle (1.5pt);
\filldraw (3,1) circle (1.5pt);
\filldraw (4,1) circle (1.5pt);
\filldraw (2,1) circle (1.5pt);

\node at (-0.1,0.2) {$1$};
\node at (-0.1,1.8) {$2$};
\node at (2,1.3) {$3$};
\node at (4.3,1) {$4$};

\node at (6,1) {$\longrightarrow$};

\begin{scope}[xshift=8cm, yshift=.25cm]
\draw (0,0) -- (1.5,1.5)   (0,1.5) -- (1.5,0);

\filldraw (0,0) circle (1.5pt);
\filldraw (1.5,1.5) circle (1.5pt);
\filldraw (0,1.5) circle (1.5pt);
\filldraw (1.5,0) circle (1.5pt);
\filldraw (.75,.75) circle (1.5pt);

\node at (-0.3,0) {$1$};
\node at (-0.3,1.5) {$2$};
\node at (1.8,1.5) {$3$};
\node at (1.8,0) {$4$};
\end{scope}

\end{tikzpicture}
\caption{Bipartite labeled trees on $[4]$, with the second tree obtained from the first tree by a folding at the vertex labeled $3$.}
\label{fig:folding}
\end{figure}

Define a partial order $\preceq$ on $\WO_n$ as follows: given $T_1,T_2\in \WO_n$, we set $T_1\preceq T_2$ if $T_1$ can be obtained from $T_2$ by a (possibly empty) sequence of foldings.

\begin{definition}\label{def:whitehead}
The set $\WO_n$ with the partial order $\preceq$ is called the \emph{Whitehead poset}.
\end{definition}

We will also be working with $\WA_n$, the ``auter'' version of the Whitehead poset. We define $\WA_n$ to be the subposet of $\WO_{n+1}$ consisting of all trees $T$ for which the vertex labeled $n+1$ has degree $1$.

If $T\in \WO_n$ has $k$ unlabeled vertices, define the \emph{degree} of $T$ to be
\[
\deg(T) = k-1\text{.}
\]
Let us collect a few basic properties of foldings and the degree function on $\WO_n$.

\begin{observation}\label{obs:degreebasic}
The following hold:
\begin{itemize}
\item[(a)] If $T'$ is obtained from $T$ by a folding, then $\deg(T')=\deg(T)-1$.
\item[(b)] $0\le \deg(T)\le n-2$ for all $T\in \WO_n$.
\item[(c)] Every tree of positive degree admits at least one folding.
\item[(d)] Every tree of degree $< n-2$ admits at least one unfolding (the reverse of a folding).
\item[(e)] $\WO_n$ contains a unique tree of degree $0$ (in which the unique unlabeled vertex is connected to all $n$ labeled vertices), and this tree is the (unique) global minimum of $\WO_n$.
\item[(f)] The maximal elements of $\WO_n$ are precisely the trees of degree $n-2$. Moreover, for every maximal tree $T$ there exists a chain $T_0\prec T_1\prec \cdots\prec T_{n-2}=T$ in $WO_n$.
\item[(g)] The maximal elements of $\WA_{n-1}$  are precisely the trees of degree $n-2$ which lie in $\WA_{n-1}$ (recall that $\WA_{n-1}$ is defined as a subset of $\WO_n$).  Moreover, for every maximal tree $T\in WA_{n-1}$, every chain as in (f) is contained in $\WA_{n-1}$.
\end{itemize}
\end{observation}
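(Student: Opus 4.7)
The plan is to derive all seven items from two elementary identities for a tree $T \in \WO_n$ with $k$ unlabeled vertices. Since $T$ is a tree on $n+k$ vertices, it has $n+k-1$ edges, and the bipartite condition means every edge joins a labeled to an unlabeled vertex, so the degree sum over unlabeled vertices equals the number of edges, namely $n+k-1$, and likewise for the labeled vertices.

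Part (a) is immediate: a folding merges two unlabeled vertices into one, decreasing $k$ by $1$. For (b), the constraint that every unlabeled vertex has degree $\ge 2$ gives $2k \le n+k-1$, so $k \le n-1$ and $\deg(T) \le n-2$; the lower bound $\deg(T) \ge 0$ follows from $k \ge 1$ (any tree on $n \ge 2$ vertices has edges, each of which requires an unlabeled endpoint). For (c), if $k \ge 2$ then the degree sum over labeled vertices is $n+k-1 \ge n+1$, so some labeled vertex $v$ must have at least two unlabeled neighbors, giving an available folding at $v$. For (d), if $k \le n-2$ then the degree sum over unlabeled vertices $n+k-1 \ge 2k+1$ forces some unlabeled $w$ to have degree $\ge 3$; choosing any labeled neighbor $v$ of $w$ and splitting $w$ into two new unlabeled vertices $w_1,w_2$ both adjacent to $v$, with the remaining neighbors of $w$ partitioned so that each side gets at least one more neighbor, gives a valid unfolding in $\WO_n$.

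Part (e) then follows: $\deg(T)=0$ forces $k=1$ and the unique unlabeled vertex is adjacent to all $n$ labeled vertices by connectedness, giving the unique degree-$0$ tree; iterating (c) from any $T$ produces a chain of foldings ending at this tree, so it is the global minimum. Part (f) combines (a), (b), and (d): no tree of degree $n-2$ admits an unfolding (this would raise the degree above the upper bound of (b) by (a)), so such trees are maximal, and conversely (d) rules out maximality for smaller degree. The chain $T_0\prec\cdots\prec T_{n-2}=T$ is obtained by iterating (c) starting from $T$ and applying (a) to see that degrees decrease by exactly one at each step.

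For (g), the point is that $\WA_{n-1}$ is closed under both foldings and unfoldings. A folding occurs at a labeled vertex of degree $\ge 2$, hence never at vertex $n$ (whose degree is $1$), so vertex $n$'s degree is preserved. For an unfolding, if $w$ has degree $\ge 3$ then at most one of its labeled neighbors is $n$, so we can choose the pivot $v \ne n$; the neighbor $n$, if present at $w$, goes to exactly one of the two new unlabeled vertices, keeping its degree at $1$. The arguments of (c), (d), and (f) then go through verbatim inside $\WA_{n-1}$. The main technical point to get right will be arranging the splits in the unfolding arguments of (d) and (g) so that each new unlabeled vertex has degree $\ge 2$ and, in (g), vertex $n$ retains degree $1$; everything else is routine counting.
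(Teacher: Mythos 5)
Your proposal is correct and follows essentially the same edge-counting and degree-counting strategy as the paper's own proof, including the same treatment of parts (c)--(f) and the key observation in (g) that one may choose the unfolding pivot to avoid the distinguished vertex. The only imprecision is the opening sentence of (g): $\WA_{n-1}$ is not literally closed under all unfoldings (an unfolding pivoted at vertex $n$ would leave it), but your immediately following choice of pivot $v\neq n$ is exactly the paper's argument and repairs the phrasing.
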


\begin{proof}
(a) This is clear from the definition of folding.

(b) Suppose that $T$ has $k$ unlabeled vertices (that is, $k=\deg(T)+1$). Since every edge connects an unlabeled vertex to a labeled vertex, we must have $k\geq 1$, so $\deg(T)\geq 0$. The total number of vertices in $T$ is $n+k$ and hence (since $T$ is a tree), it has exactly $n+k-1$ edges. Since in addition every unlabeled vertex has degree $\geq 2$, we have $n+k-1\geq 2k$, that is $k\leq n-1$, or equivalently $\deg(T)\leq n-2$, which proves (b).

(c) A tree admits no foldings if and only if all labeled vertices have degree $1$. This happens exactly when $n+k-1$ (the total number of edges) equals $n$ (the total number of labeled vertices), that is, when $k=1$ or, equivalently, $\deg(T)=0$.

(d) A tree $T$ admits an unfolding if and only if it has an unlabeled vertex of degree $>2$, and by the computation in (b) the latter happens exactly when $\deg(T)<n-2$.

(e) The first assertion is clear and the second one follows from (c).

(f) If $\deg(T)=n-2$, then $T$ is maximal by (a) and (b), and if $\deg(T)<n-2$, then $T$ is not maximal by (d). The last assertion of (f) follows from (c) and (a).

(g) As in (f), if $\deg(T)=n-2$, then $T$ is maximal. If $\deg(T)<n-2$, then by the proof of~(d) $T$ has an unlabeled vertex of degree $>2$, call it $v$. At least one of the neighbors of $v$ is labeled by some $i\neq n+1$, and an unfolding at that neighbor produces a tree $T'\in \WA_{n-1}$ with $T\prec T'$, so $T$ is not maximal. The last assertion of (g) holds simply because foldings do not increase the degrees of labeled vertices, so if $T\in \WA_{n-1}$, then $S\in \WA_{n-1}$ for any $S\prec T$.
\end{proof}

The following result is an immediate consequence of parts (f) and (g) of Observation~\ref{obs:degreebasic}.

\begin{corollary}\label{prop:dim}
The flag complex $\calF(\WO_n)$ is $(n-2)$-dimensional, and the flag complex $\calF(\WA_n)$ is $(n-1)$-dimensional.
\end{corollary}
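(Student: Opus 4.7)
The plan is to invoke the standard fact that the dimension of the flag complex of a poset $P$ equals one less than the maximum size of a chain in $P$, and then read off the extremal chain lengths from Observation~\ref{obs:degreebasic}. There is no real obstacle here; the corollary is essentially chain-length bookkeeping repackaged as a simplicial dimension.

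First I would dispatch the $\WO_n$ case. Parts (a) and (b) of Observation~\ref{obs:degreebasic} together say that the degree function is a strict rank function on $\WO_n$ taking values in $\{0,1,\ldots,n-2\}$; hence any chain $T_0 \prec T_1 \prec \cdots \prec T_k$ in $\WO_n$ must satisfy $k+1 \le n-1$, which gives $\dim \calF(\WO_n) \le n-2$. For the matching lower bound, part (f) furnishes an explicit chain $T_0 \prec T_1 \prec \cdots \prec T_{n-2}$ of $n-1$ elements (starting at the unique tree of degree $0$ and ascending to any maximal tree), corresponding to a top-dimensional simplex of $\calF(\WO_n)$.

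For $\calF(\WA_n)$ the argument runs along the same lines but inside $\WO_{n+1}$. The same strict-rank reasoning, now applied in $\WO_{n+1}$, caps chains in $\WA_n$ at $n$ elements, giving $\dim \calF(\WA_n) \le n-1$. For the matching lower bound, I would apply part (g) with $n$ replaced by $n+1$: the maximal elements of $\WA_n$ are precisely the trees of degree $n-1$ lying in $\WA_n$, and moreover every chain from the global minimum of $\WO_{n+1}$ (which itself lies in $\WA_n$, since in the star tree every labeled vertex, including $n+1$, has degree $1$) up to such a maximal tree remains inside $\WA_n$. This yields a chain of $n$ elements in $\WA_n$, attaining the upper bound. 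The only step that warrants a moment's care is the indexing shift in part (g) when translating between $\WA_{n-1} \subseteq \WO_n$ as stated and the situation $\WA_n \subseteq \WO_{n+1}$ needed here.
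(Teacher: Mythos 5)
Your argument is correct and is exactly the one the paper has in mind: the paper dismisses the corollary as "an immediate consequence of parts (f) and (g) of Observation~\ref{obs:degreebasic}," and your write-up simply spells out the chain-length bookkeeping (degree as a strict rank function bounding chain length from above, plus the explicit maximal chains from (f) and the indexing-shifted (g) realizing the bound) that the paper leaves implicit. Nothing to add.
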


\subsection{A family of abelian subgroups}\label{ssec:subgroups}

The main goal of this subsection is to show that the Whitehead poset is isomorphic to a certain poset of abelian subgroups of $\PSO_n$ ordered by inclusion. To build up to this, let us construct a subgroup $H(T)\le \PSO_n$ associated to each $T\in \WO_n$. Given $T\in \WO_n$ and $j\in [n]$, let $T_j$ be the graph obtained from $T$ by removing the vertex labeled $j$ (and all edges incident to it), and consider the partition of $[n]\setminus \{j\}$ where two elements lie in the same block if and only if the vertices of $T_j$ with the corresponding labels lie in the same connected component. Denote this partition by $\calP(T,j)$. Now let
\[
H(T) = \bigg\langle \alpha_{I,j} ~\bigg|~ j\in[n]\text{, } I\in \calP(T,j)\bigg\rangle/\Inn(F_n) \text{,}
\]
and write
\[
\mathcal{HO}_n = \{H(T) \mid T\in\WO_n\}\text{.}
\]

\smallskip

\begin{proposition}\label{lem:MMabelian}
The group $H(T)$ is free abelian of rank $\deg(T)$.
\end{proposition}

Proposition~\ref{lem:MMabelian} is easy to deduce from \cite[Proposition~5.1]{mccullough96} (see also  \cite[Theorem~2.3(ii)]{jensen06}); however, since the groups  $H(T)$ are introduced in \cite{mccullough96} in a different way and moreover \cite{mccullough96} considers symmetric automorphisms of arbitrary free products (as opposed to free groups in our paper) making the statements more technical, we will provide a direct proof for the convenience of the reader.

Below by $\overline{\alpha}_{i,j}$ we denote the image of $\alpha_{i,j}$ in $\PSO_n$. We first establish an auxiliary result.

\begin{lemma}\label{lem:MMaux}
For each $1\leq j\leq n$ let $G_j=\langle \overline{\alpha}_{i,j}\mid i\neq j\rangle$. The following hold:
\begin{itemize}
\item[(a)] The group $G_j$ is abelian.
\item[(b)] Fix $1\le j\le n$. For $c_i\in\Z$, the product $\prod\limits_{j\neq i}\overline{\alpha}_{i,j}^{c_{i}}$ equals $1$ if and only if all $c_i$ are the same. Hence $G_j\cong\mathbb Z^{n-2}$.
\item[(c)] Let $g=\prod\limits_{j=1}^n g_j$ with $g_j\in G_j$ for each $j$. Then $g=1$ if and only if each $g_j=1$.
\end{itemize}
\end{lemma}

\begin{proof}
(a) This is immediate from Lemma~\ref{lem:Whiteheadrelations}(i). 

(b) The ``if'' part is clear. The ``only if'' part holds since an automorphism of the form  $\prod\limits_{j\neq i}\alpha_{i,j}^{c_{i}}$ always fixes $x_j$, so it can only be inner if it is the conjugation by a power of $x_j$.

(c) Let $V=\PSO_n/[\PSO_n,\PSO_n]$, and let $\pi: \PSO_n\to V$ be the natural projection. As we already explained in Subsection~\ref{ssec:prior_work}, 
$V\cong \mathbb Z^{n(n-2)}$. Since $V=\sum_{j=1}^n \pi(G_j)$ and each $G_j\cong\mathbb Z^{n-2}$ by~(b), it follows that $\pi$ is injective on each $G_j$ and $V=\oplus_{j=1}^n \pi(G_j)$. This implies~(c).
\end{proof}

\begin{proof}[Proof of Proposition~\ref{lem:MMabelian}]
We first prove that $H(T)$ is abelian. Take any two generators $\alpha_{I,j}$ and $\alpha_{K,m}$ of $H(T)$, so $I\in \calP(T,j)$ and $K\in \calP(T,m)$. If $j=m$, these generators commute by Lemma~\ref{lem:Whiteheadrelations}(ii).

Suppose now that $j\neq m$. Since $\overline{\alpha}_{I,j}^{\,-1} = \overline{\alpha}_{[n]\setminus (I\cup\{j\}),j}$, replacing $\alpha_{I,j}$ by $\alpha_{[n]\setminus (I\cup\{j\}),j}$ if needed, we can assume that $m\not\in I$ (and hence $m\not\in I\cup\{j\}$). Since $I\in \calP(T,j)$, by definition of the partition $\calP(T,j)$, there exists a subtree of $T$ whose set of vertex labels is precisely $I\cup\{j\}$. Since $m\not\in I\cup\{j\}$, the set $I\cup\{j\}$ must lie in a single block of the partition $\calP(T,m)$. Hence either $I\cup\{j\}\subseteq K$ or $I\cup\{j\}$ is disjoint from $K$ (and hence from $K\cup\{m\}$). In each case $\alpha_{I,j}$ and $\alpha_{K,m}$ commute by Lemma~\ref{lem:Whiteheadrelations}(iii). 
\vskip .1cm

Thus, we have proved that $H(T)$ is abelian, so any element of $H(T)$ can be written as a product $\prod\limits_{j=1}^n p_j$ where each $p_j=\prod\limits_{I\in \calP(T,j)}\overline{\alpha}_{I,j}^{c_{I,j}}$ for some $c_{I,j}\in\mathbb Z$. By Lemma~\ref{lem:MMaux}(c), $\prod\limits_{j=1}^n p_j=1$ if and only if $p_j=1$ for each $1\leq j\leq n$. For a fixed $j$,
the elements of $\calP(T,j)$ are pairwise disjoint, so by Lemma~\ref{lem:MMaux}(b) and Lemma~\ref{lem:Whiteheadrelations}(i) we have $p_j=1$ if and only if the integers $c_{I,j}$ are the same  for all $I$. Hence $H(T)$ is free abelian of rank $\sum\limits_{j=1}^n (d_j-1)=\sum\limits_{j=1}^n d_j-n$ where $d_j$ is the degree of the vertex of $T$ labeled $j$.

Since every edge connects a labeled vertex to an unlabeled vertex, $\sum\limits_{j=1}^n d_j$ is the total number of edges of $T$, which is also equal to $n+\deg(T)$ since $T$ has $n$ labeled vertices and $\deg(T)+1$ unlabeled vertices. Thus, the rank of $H(T)$ is equal to $(n+\deg(T))-n=\deg(T)$, as desired.
\end{proof}

In the ``auter'' case, for each $T\in\WA_n$ let
\[
H_A(T)= H(T) \cap \PSA_n \text{,}
\]
where we view $\PSA_n$ as a subgroup of $\PSO_{n+1}$ by having each automorphism act trivially on $x_{n+1}$ (and noting that non-trivial  automorphisms of this form are never inner). Let
\[
\mathcal{HA}_n= \{H_A(T)\mid T\in\WA_n\}\text{.}
\]
Note that each $H_A(T)$ is abelian, being a subgroup of $H(T)$.

\smallskip

We will show that the map $T\mapsto H(T)$ is an isomorphisms of posets between $\WO_n$ and $\mathcal{HO}_n$ (ordered by inclusion) and likewise the map $T\mapsto H_A(T)$ is an isomorphism of posets between $\WA_n$ and $\mathcal{HA}_n$. First we need some preparations.

\begin{definition}
Let $T\in \WO_n$ and $j\in [n]$. A subset $I$ of $[n]\setminus\{j\}$ will be called \emph{$(T,j)$-complete} if $I$ is a (possibly empty) union of some blocks of the partition $\calP(T,j)$.
\end{definition}

\begin{lemma}\label{lem:Tjcomplete}
Let $T\in WO_n$ and $I\subseteq [n]\setminus\{j\}$. Then 
$\overline{\alpha}_{I,j}\in H(T)$ if and only if $I$ is $(T,j)$-complete.
\end{lemma}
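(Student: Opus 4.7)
The \emph{if} direction is essentially immediate: if $I = \bigsqcup_\ell I_\ell$ is a disjoint union of blocks of $\calP(T,j)$, then the second McCool relation $[\alpha_{ij},\alpha_{i'j}]=1$ yields $\alpha_{I,j} = \prod_\ell \alpha_{I_\ell,j}$ in $\PSA_n$, exhibiting $\overline{\alpha}_{I,j}$ as a product of defining generators of $H(T)$.

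For the converse, the plan is to pass to the abelianization of $\PSO_n$ and read off the $(T,j)$-completeness of $I$ by comparing coefficients. Identify $(\PSA_n)^{\mathrm{ab}}$ with $\Z^{n(n-1)}$ via the basis $\{e_{ik}\}_{i \ne k}$, where $e_{ik}$ is the image of $\alpha_{ik}$; then $(\PSO_n)^{\mathrm{ab}}$ is this group modulo the $n$ relations $r_k := \sum_{i \ne k} e_{ik} = 0$ (see the start of Section~\ref{ssec:prior_work}). Since $H(T)$ is abelian by Theorem~\ref{thrm:MMabelian}, the assumption $\overline{\alpha}_{I,j} \in H(T)$ gives integers $c_{J,k}$, indexed by $k \in [n]$ and $J \in \calP(T,k)$, with $\overline{\alpha}_{I,j} = \prod_{k,J} \overline{\alpha}_{J,k}^{c_{J,k}}$ in $\PSO_n$. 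Abelianizing this identity and lifting back to $\Z^{n(n-1)}$ produces integers $\lambda_k$ such that
\[
\sum_{i \in I} e_{ij} \;=\; \sum_{k \in [n]}\sum_{J \in \calP(T,k)} c_{J,k} \sum_{i \in J} e_{ik} \;+\; \sum_{k \in [n]} \lambda_k\, r_k.
\]

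The final step is to compare coefficients of $e_{ij}$ for each $i \in [n]\setminus\{j\}$. Because exactly one block $J_i \in \calP(T,j)$ contains $i$, the displayed equation reduces to $\mathbf{1}_I(i) = c_{J_i,j} + \lambda_j$; since the right-hand side depends on $i$ only through the block $J_i$, the indicator function $\mathbf{1}_I$ is constant on each block of $\calP(T,j)$, which is exactly the statement that $I$ is a union of blocks, i.e.\ $(T,j)$-complete. The main subtlety to handle carefully is this lift from $(\PSO_n)^{\mathrm{ab}}$ back to $(\PSA_n)^{\mathrm{ab}}$: the equality in the quotient only holds modulo integer linear combinations of the $r_k$, so the correction terms $\lambda_k r_k$ must be carried along. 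Fortunately they contribute only the single additive constant $\lambda_j$ to the coefficient of $e_{ij}$, which does not disturb the block-constancy conclusion.
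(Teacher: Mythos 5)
Your proof is correct and takes essentially the same approach as the paper's: both directions are handled identically, and the converse proceeds by passing to the abelianization of $\PSO_n$ and comparing coefficients of the basis vectors $e_{ij}$, with the correction terms $\lambda_k r_k$ playing the same role as the paper's observation about the vanishing products $\prod_{i\in [n]\setminus\{j\}}\overline{\alpha}_{i,j}$.
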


\begin{proof} If $I$ is $(T,j)$-complete, then $I=\sqcup_k I_k$ where each $I_k\in\calP(T,j)$ and hence
$\overline{\alpha}_{I,j}=\prod_k \overline{\alpha}_{I_k,j}\in H(T)$.

Let us now prove the converse. Suppose that $\overline{\alpha}_{I,j}\in H(T)$ for some $j\in [n]$ and $I\subseteq [n]\setminus\{j\}$, so that
\[
\overline{\alpha}_{I,j}=\prod_{t=1}^k (\overline{\alpha}_{B_t,j_t})^{c_t},
\]
where $B_t\in \calP(T,j_t)$ for each $t$, all pairs $(B_t,j_t)$ are distinct, and all $c_t\neq 0$. By Lemma~\ref{lem:MMaux}(c), we can assume that $j_t=j$ for each $t$ in the above equality, and we can rewrite the latter as
\[
\prod_{i\in I}\overline{\alpha}_{i,j} = \prod_{t=1}^k 
\left(\prod_{i\in B_t}\overline{\alpha}_{i,j}\right)^{c_t}. \eqno (***)
\]
Suppose now that some $i,i'\in [n]\setminus\{j\}$ lie in the same block of the partition $\calP(T,j)$. Then $\overline{\alpha}_{i,j}$ and $\overline{\alpha}_{i',j}$ appear on the right-hand side of (***) with the same exponent. Since the only vanishing products of the elements $\overline{\alpha}_{i,j}$ for a fixed $j$ are powers of $\prod_{i\in [n]\setminus\{j\}}\overline{\alpha}_{i,j}$, it follows that $\overline{\alpha}_{i,j}$ and $\overline{\alpha}_{i',j}$ also have the same exponent on the left-hand side of (***), so $i$ and $i'$ either both lie in $I$ or both lie outside of $I$. It follows that $I$ is a union of blocks of $\calP(T,j)$, so it is $(T,j)$-complete.
\end{proof}

\begin{lemma}\label{lem:bij}
The function $\eta\colon T\mapsto H(T)$ is a bijection from $\WO_n$ to $\mathcal{HO}_n$, and the function $\eta_A\colon T\mapsto H_A(T)$ is a bijection from $\WA_n$ to $ \mathcal{HA}_n$.
\end{lemma}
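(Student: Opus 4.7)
Both maps $\eta$ and $\eta_A$ are surjective by the definition of their codomains as images, so only injectivity requires work. The plan is to translate injectivity of $\eta$ into a combinatorial statement — that a bipartite labeled tree is determined by the partitions it induces at its labeled vertices — and then settle that statement by a direct reconstruction; the auter case will then reduce to the $\PSO$ case.

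Suppose $H(T_1) = H(T_2)$ with $T_1, T_2 \in \WO_n$. By Lemma~\ref{lem:Tjcomplete}, for each $j \in [n]$ the collection of $(T_1, j)$-complete subsets of $[n] \setminus \{j\}$ coincides with the collection of $(T_2, j)$-complete subsets. A partition of a finite set is determined by its collection of unions-of-blocks (its blocks are the minimal non-empty such subsets), so this forces $\calP(T_1, j) = \calP(T_2, j)$ for every $j \in [n]$. Injectivity of $\eta$ thus reduces to the combinatorial claim that $T \in \WO_n$ is determined by the family $\{\calP(T, j)\}_{j \in [n]}$.

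For the reconstruction, observe that the blocks of $\calP(T, j)$ are in bijection with the unlabeled neighbors of $j$ in $T$: the block assigned to a neighbor $w$ is the set of labels in the connected component of $T_j$ containing $w$, so each block records one edge of $T$ incident to $j$. The unlabeled vertices of $T$ are then recovered by bundling compatible blocks across different labeled vertices — an unlabeled vertex $w$ with labeled neighborhood $\{j_1, \ldots, j_k\}$ corresponds to the partition $\{A_1, \ldots, A_k\}$ of $[n]$ with $A_l := [n] \setminus B_l$, where $B_l \in \calP(T, j_l)$ is the block at $j_l$ associated to $w$. A short induction on $n$ (peeling off labeled leaves, which are the vertices $j$ with $|\calP(T, j)| = 1$) then shows that the tree is uniquely assembled from this data. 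This combinatorial reconstruction — matching blocks across different partitions to unlabeled vertices without ambiguity — is the main obstacle; the other steps are routine translations between algebraic, partition-theoretic, and tree-theoretic data.

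For $\eta_A$, the argument reduces to the $\PSO$ case. For $T \in \WA_n \subseteq \WO_{n+1}$, the vertex $n+1$ has degree $1$, so $\calP(T, n+1) = \{[n]\}$ and the only generator of $H(T)$ based at $n+1$ is $\overline{\alpha}_{[n], n+1}$, which is trivial in $\PSO_{n+1}$ since $\alpha_{[n], n+1}$ is inner. Moreover, the relation $\overline{\alpha}_{I, j} \cdot \overline{\alpha}_{[n+1] \setminus (\{j\} \cup I), j} = 1$ in $\PSO_{n+1}$ lets us rewrite any generator $\overline{\alpha}_{I, j}$ with $j \in [n]$ as the inverse of one whose subscript excludes $n+1$, which lies in $\PSA_n$. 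Hence $H(T) \subseteq \PSA_n$, so $H_A(T) = H(T)$, and injectivity of $\eta_A$ follows immediately from injectivity of $\eta$ applied to $\WO_{n+1}$.
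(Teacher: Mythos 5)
Your overall plan mirrors the paper's: surjectivity is by definition, injectivity of $\eta$ reduces via Lemma~\ref{lem:Tjcomplete} to the purely combinatorial assertion that a bipartite labeled tree $T$ is determined by the family of partitions $\{\calP(T,j)\}_{j\in[n]}$, and injectivity of $\eta_A$ reduces to that of $\eta$. The reduction to the combinatorial claim is correct. Your treatment of $\eta_A$ is also correct and is a nice way to package the case analysis: you show that for $T\in\WA_n$ the group $H(T)$ is already contained in (the image of) $\PSA_n$ -- the generator based at $n+1$ is trivial, and any generator $\overline{\alpha}_{I,j}$ with $j\in[n]$ and $n+1\in I$ equals $\overline{\alpha}_{I',j}^{-1}$ with $n+1\notin I'$ -- so that $H_A(T)=H(T)$ and injectivity of $\eta_A$ is immediate. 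The paper runs essentially the same three cases but phrased as showing $H(T)\cap\PSA_n=H(U)\cap\PSA_n$ forces $H(T)\subseteq H(U)$; your formulation is arguably slightly cleaner.

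The gap is in the reconstruction of $T$ from the partitions, which you yourself flag as ``the main obstacle.'' Your description of how an unlabeled vertex $w$ with labeled neighborhood $\{j_1,\dots,j_k\}$ corresponds to the tuple of blocks $B_l\in\calP(T,j_l)$ is a description of what data $T$ produces, not a procedure for recovering $T$ from the partitions alone: it does not say how to tell, from $\{\calP(T,j)\}_j$, which blocks at different labeled vertices are to be bundled together. The proposed induction -- peel off a labeled leaf $j$ (detected by $|\calP(T,j)|=1$) and recurse -- also does not close the gap as stated, because after recovering the smaller tree $T'$ you still need a criterion, expressed in terms of the partitions, for where to reattach $j$: whether its unique unlabeled neighbor $w$ survives in $T'$ (degree $\ge 3$) or is smoothed away (degree $2$), and in the latter case which labeled vertex $w$'s other edge goes to. What is missing is precisely the observation the paper isolates (conditions (i)--(iv) in its proof): two labeled vertices $i,i'$ share an unlabeled neighbor in $T$ if and only if, for every $k\ne i,i'$, they lie in the same block of $\calP(T,k)$. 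With that in hand, the set of labeled neighbors of $w$ is read off directly from the partitions, and either your leaf-peeling induction or the paper's auxiliary-graph/maximal-clique reconstruction goes through. Without it, the bundling step is unsupported.
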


\begin{proof}
The functions $\eta$ and $\eta_A$ are surjective by construction, so we just need to prove injectivity. 

We first prove that $\eta$ is injective by describing an algorithm
for extracting $T$ from $H(T)$. Fix $T\in \WO_n$. We claim that for any distinct $i,j\in [n]$ the following conditions are equivalent:
\begin{itemize}
\item[(i)] The vertices labeled $i$ and $j$ are connected to the same unlabeled vertex in $T$.
\item[(ii)] The unique reduced path in $T$ connecting the vertices labeled
$i$ and $j$ contains no other labeled vertices.
\item[(iii)] The labels $i$ and $j$ lie in the same block of the partition $\calP(T,k)$ for any $k\in [n]\setminus\{i,j\}$.
\item[(iv)] For any $\overline{\alpha}_{I,k}\in H(T)$ with $k\neq i,j$, the set $I$ contains either both $i$ and $j$ or neither of them.
\end{itemize}
The equivalences (i)$\iff$(ii) and (ii)$\iff$(iii) are clear and 
(iii)$\iff$(iv) holds by Lemma~\ref{lem:Tjcomplete}. Since condition (iv) can be read off from $H(T)$, the same is true for (i).

Let us now form an auxiliary graph $\mathrm{Aux}(T)$. It has $n$ vertices, labeled by the elements of $[n]$, and no unlabeled vertices; the vertices labeled by $i$ and $j$ are connected by an edge if and only if the vertices of $T$ labeled $i$ and $j$ are connected by edges to the same unlabeled vertex in $T$. By the previous paragraph, $\mathrm{Aux}(T)$ is determined by $H(T)$. On the other hand, it is easy to recover $T$ from $\mathrm{Aux}(T)$: create an unlabeled vertex for each maximal clique $C$ in $\mathrm{Aux}(T)$ and connect that vertex to a vertex labeled by $i$ for each $i\in C$. Thus, we have proved that $\eta$ is injective.

We now deduce injectivity of $\eta_A$ from that of $\eta$. For this, we need to prove that if $T,U\in \WA_n$ are such that $H(T)\cap\PSA_n=H(U)\cap\PSA_n$ then $H(T)=H(U)$. By symmetry it suffices to check that $H(T)\subseteq H(U)$.
Consider any $\overline{\alpha}_{I,j}\in H(T)$, so $j\in [n+1]$ and $I$ is $(T,j)$-complete. We will prove that $\overline{\alpha}_{I,j}\in H(U)$ by splitting into three cases.

\textbf{Case 1:} Suppose $j\in[n]$ and $n+1\not\in I$. Then $\overline{\alpha}_{I,j}\in \PSA_n$, hence the hypothesis $H(T)\cap\PSA_n=H(U)\cap\PSA_n$ ensures that $\overline{\alpha}_{I,j}\in H(U)$. 

\textbf{Case 2:} Suppose $j=n+1$. Since $n+1$ is a leaf in $T$, we have $I=[n]$ or $\emptyset$; in either case $\alpha_{I,j}$ is inner, so $\overline{\alpha}_{I,j}=1$ is automatically in $H(U)$.

\textbf{Case 3:} Suppose $j\in[n]$ and $n+1\in I$. Let $I'$ be the complement of $I$ in $[n+1]\setminus \{j\}$, so $\alpha_{I,j}\alpha_{I',j}$ is inner. Hence
$\overline{\alpha}_{I',j} = \overline{\alpha}_{I,j}^{\,\,-1}\in H(T)$, but now $n+1\not\in I'$, so by case~1 we have $\overline{\alpha}_{I',j}\in H(U)$, and thus $\overline{\alpha}_{I,j}\in H(U)$ as desired.
\end{proof}

Recall that a poset is a \emph{meet semilattice} if any two elements $x$ and $y$ have a unique greatest lower bound, called their \emph{meet}, denoted $x\wedge y$.

\begin{proposition}\cite[Lemmas~5.4 and~5.5]{mccullough96}\label{prop:int_closed}
The posets $\WO_n$ and $\WA_n$ are meet semilattices. Moreover, $H(T\wedge U)=H(T)\cap H(U)$ for any $T,U\in\WO_n$ and $H_A(T\wedge U)=H_A(T)\cap H_A(U)$ for any $T,U\in\WA_n$. In particular, the families $\mathcal{HA}_n$ and $\mathcal{HO}_n$ are closed under taking intersections and so are also meet semilattices.
\end{proposition}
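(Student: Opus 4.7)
The plan is to construct, for any $T, U \in \WO_n$, an explicit tree $W \in \WO_n$ which will serve as $T \wedge U$ and satisfy $H(W) = H(T) \cap H(U)$. For each $j \in [n]$, let $\mathcal{R}_j$ be the join $\calP(T,j) \vee \calP(U,j)$ in the partition lattice of $[n] \setminus \{j\}$, i.e.\ the smallest common coarsening. The first observation is that a subset $I \subseteq [n]\setminus\{j\}$ is both $(T,j)$-complete and $(U,j)$-complete if and only if $I$ is a union of blocks of $\mathcal{R}_j$: indeed, being $(T,j)$-complete is equivalent to $\{I, I^c\}$ being coarser than $\calP(T,j)$, and both this and its $U$-analogue hold simultaneously precisely when $\{I, I^c\}$ is coarser than the join $\mathcal{R}_j$.

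To construct $W$, I will iteratively fold $T$. A single fold at a labeled vertex $v$ (merging two unlabeled neighbors $w_1,w_2$ of $v$) merges the two corresponding blocks of $\calP(T,v)$ and leaves every $\calP(T,k)$ with $k\neq v$ unchanged. Since the unlabeled neighbors of $v$ are in bijection with blocks of $\calP(T,v)$, any coarsening of $\calP(T,j)$ can be achieved by a sequence of folds at $j$. Performing such folds independently at each $j$ produces a tree $W \in \WO_n$ with $\calP(W,j)=\mathcal{R}_j$ for all $j$, and $W \preceq T$ by construction. Running the same procedure from $U$ yields a tree with identical partitions; since the family $\{\calP(\cdot,j)\}_j$ determines $H(\cdot)$ by Lemma~\ref{lem:Tjcomplete}, which in turn determines the tree by Lemma~\ref{lem:bij}, this second tree equals $W$, so $W \preceq U$ as well. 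For the greatest-lower-bound property, any $W'' \preceq T,U$ has partitions coarser than both $\calP(T,j)$ and $\calP(U,j)$ at every $j$ (foldings only coarsen partitions), hence coarser than $\mathcal{R}_j = \calP(W,j)$; running the same folding argument now from $W$ reconstructs $W''$, giving $W'' \preceq W$.

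For the identity $H(W) = H(T)\cap H(U)$, the inclusion $\supseteq$ follows immediately from the first paragraph via Lemma~\ref{lem:Tjcomplete}. For the reverse inclusion, the key is that the abelianization map $\PSO_n \to \PSO_n/[\PSO_n,\PSO_n]$ restricted to each $H(\cdot)$ is injective: its image is generated by the classes of the $\overline{\alpha}_{B,j}$ for $B \in \calP(\cdot,j)$ subject to the relations $\sum_{B \in \calP(\cdot,j)} \overline{\alpha}_{B,j} = 0$ (coming from the fact that $\overline{\alpha}_{[n]\setminus\{j\},j}$ is trivial in $\PSO_n$), and these relations involve disjoint generators across different $j$, giving rank $\sum_j (|\calP(\cdot,j)|-1) = \deg(\cdot)$, which matches Theorem~\ref{thrm:MMabelian}. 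Inside the abelianization, an element of $H(T)$ is thus a $\Z$-combination of the $\overline{\alpha}_{i,j}$ whose coefficients are constant on blocks of each $\calP(T,j)$; also lying in $H(U)$ forces constancy on $\calP(U,j)$-blocks, hence on $\mathcal{R}_j$-blocks, which is exactly the image of $H(W)$. The auter case is entirely parallel: folds never increase degrees of labeled vertices, so starting from $T \in \WA_n$ the constructed $W$ remains in $\WA_n$, whence $H_A(W) = H(W) \cap \PSA_n = H_A(T) \cap H_A(U)$. The main technical step is the abelianization injectivity used in the reverse inclusion; the combinatorial folding construction itself is routine once one verifies that folds at $j$ leave all partitions $\calP(\cdot,k)$ for $k\neq j$ unchanged.
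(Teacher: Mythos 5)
The paper itself gives no proof here --- it simply cites Lemmas~5.4 and~5.5 of McCullough--Miller --- so your self-contained argument is necessarily a different route, and it is essentially correct. Defining $W$ by the joins $\mathcal{R}_j=\calP(T,j)\vee\calP(U,j)$, checking that a fold at $j$ merges two blocks of $\calP(\cdot,j)$ and leaves every $\calP(\cdot,k)$ with $k\neq j$ unchanged, and then invoking the fact (via Lemma~\ref{lem:bij}) that a tree in $\WO_n$ is determined by its partition family correctly yields $W\preceq T,U$ and the greatest-lower-bound property. The abelianization step is also sound: the rank count $\sum_j\bigl(|\calP(T,j)|-1\bigr)=\deg(T)$ matches Theorem~\ref{thrm:MMabelian}, so $H(T)$ embeds into the abelianization of $\PSO_n$, and constancy of coefficients on $\mathcal{R}_j$-blocks is exactly membership in the image of $H(W)$. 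What your approach buys is a proof that stays entirely inside the tools the paper has already assembled (Lemmas~\ref{lem:Tjcomplete} and~\ref{lem:bij}, Theorem~\ref{thrm:MMabelian}), at the cost of leaning on the nontrivial input that $H(T)$ is free abelian of rank $\deg(T)$; McCullough--Miller argue directly with the trees and based partitions.

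Two small repairs. First, your inclusion labels are swapped: what "follows immediately" from the first paragraph is $H(W)\subseteq H(T)\cap H(U)$ (apply it to the generators $\overline{\alpha}_{B,j}$, $B\in\mathcal{R}_j$, of $H(W)$), while the abelianization argument is what proves $H(T)\cap H(U)\subseteq H(W)$ --- and that direction cannot be immediate, since a priori $H(T)\cap H(U)$ need not be generated by Whitehead generators. Second, the conclusion of the hard inclusion needs one more sentence: given $g\in H(T)\cap H(U)$ whose abelianized image lies in the image of $H(W)$, choose $h\in H(W)$ with the same image; since $H(W)\subseteq H(T)$ by the easy inclusion, both $g$ and $h$ lie in $H(T)$, and injectivity of the abelianization on $H(T)$ forces $g=h\in H(W)$. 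Both points are cosmetic; the mathematics is right.
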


Now let us prove that the bijection $\eta\colon T\mapsto H(T)$ is a poset isomorphism from $\WO_n$ to $\mathcal{HO}_n$, along with the ``auter'' version.

\begin{proposition}\label{prop:poset_iso}
For $T,U\in\WO_n$, we have $T\preceq U$ if and only if $H(T)\le H(U)$. For $T,U\in\WA_n$, we have $T\preceq U$ if and only if $H_A(T)\le H_A(U)$. Hence $\eta$ and $\eta_A$ are isomorphisms of posets, and we get $\calF(\WO_n)\cong \calF(\mathcal{HO}_n)$ and $\calF(\WA_n)\cong \calF(\mathcal{HA}_n)$. Analogous results also hold with $\calF$ replaced by $\calF_0$ everywhere.
\end{proposition}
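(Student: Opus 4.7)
All three assertions of the proposition follow immediately from the single biconditional $T \preceq U \iff H(T) \le H(U)$ for $T, U \in \WO_n$ (and its $\WA_n$ counterpart), so my goal is to establish this equivalence. Granted it, the bijections $\eta$ and $\eta_A$ from Lemma~\ref{lem:bij} upgrade to poset isomorphisms, which in turn induce the claimed simplicial isomorphisms on $\calF$. For the $\calF_0$ version I will observe that the star tree of Observation~\ref{obs:degreebasic}(e) is the global minimum of $\WO_n$, and its analogue on $[n+1]$ is the global minimum of $\WA_n$; since each has degree $0$, Theorem~\ref{thrm:MMabelian} forces $\eta$ and $\eta_A$ to send it to the trivial subgroup, which is the global minimum of $\mathcal{HO}_n$ and $\mathcal{HA}_n$, so removing the minima on both sides matches up.

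\textbf{Forward direction.} For $T \preceq U$, I would induct on the length of a folding sequence from $U$ to $T$ and reduce to a single folding at some labeled vertex $v$ that merges two unlabeled neighbors $w_1, w_2$ of $v$. A direct inspection of the connected components of $T_j$ and $U_j$ then shows that $\calP(T, j) = \calP(U, j)$ for every $j \ne v$ (the copies of $w_1, w_2$ stay joined via $v$ after deleting $j$), while $\calP(T, v)$ is obtained from $\calP(U, v)$ by merging the two blocks $B_1, B_2$ corresponding to the subtrees at $w_1, w_2$. Hence every Whitehead generator of $H(T)$ either already appears as a generator of $H(U)$ or equals the product $\overline{\alpha}_{B_1, v} \overline{\alpha}_{B_2, v}$ of two such generators, giving $H(T) \le H(U)$.

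\textbf{Backward direction.} Lemma~\ref{lem:Tjcomplete} translates $H(T) \le H(U)$ into the purely combinatorial statement that $\calP(T, j)$ is a coarsening of $\calP(U, j)$ for every $j \in [n]$. I would then prove by induction on $\deg(U) - \deg(T)$ that this coarsening condition forces $T \preceq U$. If the partitions agree for every $j$ then $T = U$; otherwise I would pick $j$ and a block $B \in \calP(T, j)$ properly containing two distinct blocks $B_1, B_2 \in \calP(U, j)$, note that $B_1, B_2$ correspond to two unlabeled neighbors $w_1, w_2$ of the vertex labeled $j$ in $U$, and form $U'$ by folding $U$ at $j$ so as to identify $w_1$ with $w_2$. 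Then $U' \prec U$, $\deg(U') = \deg(U) - 1$, and the coarsening condition persists between $T$ and $U'$ (at $j$ we replaced $B_1, B_2$ by $B_1 \cup B_2 \subseteq B$, and the other partitions are unchanged), so by induction $T \preceq U'$ and thus $T \preceq U$.

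\textbf{Auter version and main obstacle.} For $T \in \WA_n$ I would verify that every Whitehead generator of $H(T) \le \PSO_{n+1}$ already lies in the image of $\PSA_n$: the case $n+1 \notin I$ is automatic, while when $n+1 \in I$ the identity $\overline{\alpha}_{I, j} = \overline{\alpha}_{I', j}^{-1}$ for $I' = [n+1] \setminus (\{j\} \cup I)$, coming from the fact that $\alpha_{[n+1] \setminus \{j\}, j}$ is inner, exhibits a representative in $\PSA_n$. Therefore $H_A(T) = H(T)$ for every $T \in \WA_n$, and since $\WA_n$ inherits its order from $\WO_{n+1}$, the $\WA_n$ biconditional follows at once from the $\WO_{n+1}$ one. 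The principal obstacle in the argument is the combinatorial backward step: translating the abstract algebraic containment into an explicit folding sequence requires identifying, at each stage, the correct pair of unlabeled vertices whose merge preserves the coarsening condition, and the partition reformulation provided by Lemma~\ref{lem:Tjcomplete} is exactly what makes this choice possible.
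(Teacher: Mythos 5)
Your proposal is correct, and the forward direction and the $\calF_0$/auter bookkeeping match the paper's argument in substance; the interesting divergence is in the backward direction. The paper disposes of $H(T)\le H(U)\Rightarrow T\preceq U$ in two lines by invoking the meet-semilattice structure imported from McCullough--Miller (Proposition~\ref{prop:int_closed}): $H(T)=H(T)\cap H(U)=H(T\wedge U)$, and injectivity of $\eta$ from Lemma~\ref{lem:bij} forces $T=T\wedge U$. You instead give a self-contained combinatorial induction: translate the containment via Lemma~\ref{lem:Tjcomplete} into ``$\calP(T,j)$ coarsens $\calP(U,j)$ for all $j$'' and peel off one fold of $U$ at a time. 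This buys independence from Proposition~\ref{prop:int_closed} and produces an explicit folding sequence, at the cost of two small points you should make explicit: (1) the base case ``all partitions agree $\Rightarrow T=U$'' needs a reason --- either cite the injectivity of $\eta$ (equal partitions give equal subgroups by Lemma~\ref{lem:Tjcomplete}, then apply Lemma~\ref{lem:bij}), or use the reconstruction of $T$ from its partitions; and (2) the induction quantity $\deg(U)-\deg(T)$ should be checked nonnegative, which follows from $\sum_j |\calP(T,j)| = n+\deg(T)$ (the left side is the sum of degrees of labeled vertices, hence the edge count) together with $|\calP(T,j)|\le|\calP(U,j)|$. For the auter case, your observation that $H_A(T)=H(T)$ for $T\in\WA_n$ is correct and is a clean repackaging of the three-case analysis the paper carries out inside the proof of Lemma~\ref{lem:bij}; it immediately reduces the $\WA_n$ biconditional to the $\WO_{n+1}$ one, whereas the paper simply declares the auter proof ``analogous.''
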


\begin{proof}
Suppose that $T\preceq U$, so $T$ is obtained from $U$ by a sequence of foldings. Then for any $j\in [n]$ and any $I\subseteq [n]\setminus \{j\}$ that is $(T,j)$-complete, $I$ is also $(U,j)$-complete. We conclude that $H(T)\le H(U)$.

Now suppose that $H(T)\le H(U)$. This implies that $H(T)=H(T)\cap H(U)$, and so by Proposition~\ref{prop:int_closed} we have $H(T)=H(T\wedge U)$. By Lemma~\ref{lem:bij}, this tells us that $T=T\wedge U$, and so $T\preceq U$. Since we already know that $\eta:\WO_n\to \mathcal{HO}_n$ is a bijection, this proves that it is an isomorphism of posets.

The proof for $\eta_A$ is analogous, and the remaining statements in the proposition are all immediate consequences of the facts that $\eta$ and $\eta_A$ are poset isomorphisms.
\end{proof}

\subsection{McCullough--Miller space}\label{ssec:mm_space}

We are now ready to define the McCullough--Miller spaces $\MMO_n$ and $\MMA_n$, slightly rephrasing the original definitions from \cite{mccullough96} and \cite{chen05}. The idea for $\MMO_n$ is to glue together copies of $\calF(\WO_n)$, one for each element of $\PSO_n$, along appropriate intersections. Since we have already established that $\WO_n$ is isomorphic to $\mathcal{HO}_n$, it is not surprising that the end result of this is a complex isomorphic to $\calF(\Cos(\PSO_n,\mathcal{HO}_n))$, and indeed we will simply define McCullough--Miller space this way. The equivalence between this viewpoint and the original viewpoint of McCullough--Miller in \cite{mccullough96} is not difficult, and is discussed for example in \cite{griffin13} (along with the $\MMA_n$ case).

\begin{definition}[McCullough--Miller space]\label{def:mm}
Define the \emph{McCullough--Miller space} $\MMO_n$ to be
\[
\MMO_n = \calF(\Cos(\PSO_n,\mathcal{HO}_n))\text{,}
\]
and define the ``auter'' version $\MMA_n$ to be
\[
\MMA_n = \calF(\Cos(\PSA_n,\mathcal{HA}_n))\text{.}
\]
\end{definition}

The only property of the McCullough--Miller spaces that we will need is also the most important one, namely, that they are contractible:

\begin{proposition}\label{prop:contractible}
The complexes $\MMO_n$ and $\MMA_n$ are contractible. Thus, $\mathcal{HO}_n$ and $\mathcal{HA}_n$ are $\infty$-generating, for $\PSO_n$ and $\PSA_n$ respectively.
\end{proposition}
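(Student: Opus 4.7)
The plan is to reduce the statement to the original contractibility theorem of McCullough--Miller \cite{mccullough96} (together with its ``auter'' analog, essentially due to Chen \cite{chen05}), and then use the fact that $\mathcal{HO}_n$ and $\mathcal{HA}_n$ are intersection-closed to pass from the flag complex to the nerve and obtain the $\infty$-generation conclusion.

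First I would recall that the McCullough--Miller space was originally constructed in \cite{mccullough96} as a simplicial complex obtained by gluing together copies of $\calF(\WO_n)$, one per element of $\PSO_n$, with identifications governed by the stabilizers of the natural $\PSO_n$-action: the simplex corresponding to a tree $T$ has stabilizer exactly $H(T)$. Under the poset isomorphism $\eta\colon \WO_n\to \mathcal{HO}_n$ from Proposition~\ref{prop:poset_iso}, this description translates directly into the coset-poset description $\calF(\Cos(\PSO_n,\mathcal{HO}_n))$ that we adopted in Definition~\ref{def:mm}. The equivalence between the two viewpoints is worked out in \cite{griffin13}; the analogous translation for $\MMA_n$ follows by the same argument applied to Chen's auter variant \cite{chen05}.

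Second, I would invoke the central theorems of \cite{mccullough96} and \cite{chen05}, which assert precisely that these spaces are contractible. This delivers contractibility of $\MMO_n$ and $\MMA_n$ directly, with no new work required from us.

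Finally, for the $\infty$-generation statement, Proposition~\ref{prop:int_closed} tells us that $\mathcal{HO}_n$ and $\mathcal{HA}_n$ are closed under intersection, so Remark~\ref{rmk:flag} applies and the nerves $N(\Cos(\PSO_n,\mathcal{HO}_n))$ and $N(\Cos(\PSA_n,\mathcal{HA}_n))$ are homotopy equivalent to $\MMO_n$ and $\MMA_n$ respectively. Since the latter are contractible, so are the former, which is exactly the definition of $\infty$-generation (Definition~\ref{def:hi_gen}). There is no substantive obstacle here: the geometric heart of the argument—the contractibility of McCullough--Miller space—is a well-known theorem in the literature, and everything else is translation between equivalent combinatorial descriptions.
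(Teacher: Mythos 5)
Your proposal is correct and follows essentially the same route as the paper: cite McCullough--Miller for $\MMO_n$, cite the Chen--Glover--Jensen auter variant for $\MMA_n$, and pass to the nerve via intersection-closure and Remark~\ref{rmk:flag} to conclude $\infty$-generation. The only point the paper makes that you glossed over is that \cite{chen05} literally treats free products of \emph{finite} groups rather than free groups, and Griffin \cite{griffin13} is invoked precisely to justify that the same contractibility argument works for free products of arbitrary groups (in particular for free groups), not merely for translating between the coset-poset and original descriptions.
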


\begin{proof}
Contractibility of $\MMO_n$ was proved by McCullough and Miller in \cite[Section~4]{mccullough96}. Contractibility of $\MMA_n$ is essentially proved by Chen, Glover and Jensen in \cite[Theorem~3.1]{chen05}. Technically, in \cite{chen05}, Chen, Glover, and Jensen consider pure symmetric automorphisms not of free groups, but of free products of finite groups. However, the proof of contractibility of the associated complex works for free products of any groups, in particular free groups, as explained by Griffin in \cite[Section~5]{griffin13}; see \cite[Theorem~5.5]{griffin13}. The statement about $\infty$-generation is now immediate from Remark~\ref{rmk:flag}.
\end{proof}

\subsection{Proof of density}\label{ssec:completion}

In this subsection we prove parts (a) and (c) of Theorem~\ref{thm:mainPSA}, asserting that $\Sigma^{n-2}(\PSA_n)$ and $\Sigma^{n-3}(\PSO_n)$ are dense in their respective character spheres. The idea is to prove that a suitably generic character satisfies the hypotheses of Theorem~\ref{thrm:MMV98}, using the family of subgroups $\mathcal{HA}_n$ for $\PSA_n$ and $\mathcal{HO}_n$ for $\PSO_n$.

Most of the hypotheses involved in Theorem~\ref{thrm:MMV98} have been dealt with by now. The one we still need to prove is that the flag complexes of non-trivial subgroups from the above families are highly connected. We will actually prove a stronger statement, involving the property of being homotopy Cohen--Macualay:

\begin{definition}[Homotopy Cohen--Macaulay]
A simplicial complex $X$ of finite dimension $n$ is called \emph{homotopy Cohen--Macaulay} if for every $p$-simplex $\sigma$, the link $\mathrm{lk}(\sigma)$ is $(n-p-2)$-connected. (This includes the empty simplex, so $X$ itself should be $(n-1)$-connected.) A poset is called \emph{homotopy Cohen--Macaulay} if its flag complex is.
\end{definition}

\begin{proposition}\label{prop:hi_conn}
The posets $\WO_n$ and $\WA_n$ are homotopy Cohen--Macaulay. Hence, the flag complex $\calF_0(\mathcal{HO}_n)$ is $(n-4)$-connected and $\calF_0(\mathcal{HA}_n)$ is $(n-3)$-connected.
\end{proposition}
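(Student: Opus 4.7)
The plan is to establish the homotopy Cohen--Macaulay property for $\WO_n$ and $\WA_n$ directly, and then deduce the connectivity of $\calF_0(\mathcal{HO}_n)$ and $\calF_0(\mathcal{HA}_n)$ as a quick corollary.

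I would begin by deducing the ``hence.'' By Observation~\ref{obs:degreebasic}(e), the unique minimum $\widehat{0}\in\WO_n$ is the star tree, and for this tree every generator $\alpha_{[n]\setminus\{j\},j}$ of $H(\widehat{0})$ is inner in $F_n$, so the corresponding element of $\mathcal{HO}_n$ is the trivial subgroup; the analogous statement holds for $\WA_n$. Through the poset isomorphisms of Proposition~\ref{prop:poset_iso} this gives $\calF_0(\mathcal{HO}_n)\cong\calF(\WO_n\setminus\{\widehat{0}\})$, which is exactly the link of the $0$-simplex $\widehat{0}$ in $\calF(\WO_n)$. Since $\calF(\WO_n)$ has dimension $n-2$ by Corollary~\ref{prop:dim}, HCM forces this link to be $((n-2)-0-2)=(n-4)$-connected; the $\WA_n$ case is identical with dimension $n-1$, giving $(n-3)$-connectedness.

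For the HCM statements themselves, I would argue by induction on $n$, running $\WO_n$ and $\WA_n$ in parallel. Given a $p$-simplex $\sigma=(T_0\prec\cdots\prec T_p)$ of $\calF(\WO_n)$, its link factors as the join
\[
\mathrm{lk}(\sigma)=\calF(\WO_n^{\prec T_0})\ast\calF\bigl((T_0,T_1)\bigr)\ast\cdots\ast\calF\bigl((T_{p-1},T_p)\bigr)\ast\calF(\WO_n^{\succ T_p}),
\]
so by iterated use of the standard join connectivity estimate it suffices to bound the connectivity of each factor. The key structural input is a product decomposition of upsets: for fixed $T$ with unlabeled vertices $w_1,\dots,w_{k+1}$ of degrees $d_1,\dots,d_{k+1}$, unfoldings at different $w_i$ are mutually independent, so $\WO_n^{\succeq T}\cong\prod_i \WO_{d_i}$, each factor rooted at its own minimum (which corresponds to not unfolding at $w_i$). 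Every interval $[T,T']$ then decomposes as a product of intervals $[\widehat{0},U_i]\subseteq \WO_{d_i}$, and Walker's theorem (the proper part of a product of bounded posets is homotopy equivalent to the join of the proper parts) transforms every open interval $(T,T')$ into a join of proper parts of intervals in smaller Whitehead posets, at which point the inductive hypothesis controls each factor. The $\WA_n$ case runs in parallel, with the added bookkeeping that the vertex $n+1$ must remain a leaf throughout, which converts certain local $\WO$-factors into $\WA$-factors.

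The main obstacle will be the degree bookkeeping needed to verify that the local connectivity bounds combine, through the join formula, into exactly the sharp bounds $(n-p-4)$ and $(n-p-3)$. Since $T$ with $k+1$ unlabeled vertices of degrees $d_1,\dots,d_{k+1}$ satisfies $\sum_i(d_i-2)=n-k-2$, the rank of the ambient interval distributes across the product factors precisely as required, but one must carefully track this through each join and through both the ``interval'' and ``one-sided'' pieces of the link decomposition. A related subtlety is that the inductive step requires HCM not only for the whole posets $\WO_m$ and $\WA_m$ but also for their intervals, so either the induction must carry the stronger statement that all intervals of the appropriate rank are HCM, or one must prove a stronger property such as EL-shellability of the flag complex, which automatically passes to intervals in graded posets.
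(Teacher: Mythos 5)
Your deduction of the ``hence'' part is correct and coincides with the paper's: $H(\widehat{0})$ is trivial (its Whitehead generators $\alpha_{[n]\setminus\{j\},j}$ are inner, or equivalently it has rank $\deg(\widehat{0})=0$ by Theorem~\ref{thrm:MMabelian}), so via Proposition~\ref{prop:poset_iso} the complex $\calF_0(\mathcal{HO}_n)$ is the link of the vertex $\widehat{0}$ in $\calF(\WO_n)$, and homotopy Cohen--Macaulayness together with Corollary~\ref{prop:dim} gives $(n-4)$- and $(n-3)$-connectivity as claimed.

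For the main claim your argument is an outline whose decisive step is missing, and it takes a different route from the paper. The paper does not reprove the Cohen--Macaulay property of $\WO_n$: it quotes \cite[Theorem~5.13]{brady01}, and for $\WA_n$ it shows that the dual poset $\ZA_n$ admits a recursive atom ordering by restricting the ordering constructed for $\ZO_{n+1}$ in \cite[Theorem~5.12]{brady01}; the only genuinely new point is the covering condition, verified via the split-versus-drop analysis showing that the relevant atom keeps the vertex $n+1$ as a leaf. This is exactly the second of the two escape hatches you name at the end (prove a shellability-type property that automatically passes to intervals and links), so you have diagnosed what is needed without supplying it. Concretely, the gaps are: (1) the connectivity bookkeeping through the iterated joins is the entire content of the proposition and is deferred rather than carried out; (2) the induction as stated is not self-supporting, since the open-interval factors require connectivity bounds for intervals $(\widehat{0},U)$ in smaller Whitehead posets rather than for the full posets, and the strengthened inductive hypothesis is never set up; (3) the form of Walker's theorem you invoke is off by a suspension --- for bounded posets $P$ and $Q$ the proper part of $P\times Q$ is homotopy equivalent to $\overline{P}\ast\overline{Q}\ast S^0$, not to $\overline{P}\ast\overline{Q}$ --- and since the claimed bounds are sharp this factor must be tracked. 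The structural input you rely on, namely $\WO_n^{\succeq T}\cong\prod_i\WO_{d_i}$, is a correct fact underlying the McCullough--Miller construction, so the strategy is viable, but as written it is not a proof.
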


\begin{remark}Since Proposition~\ref{prop:dim} says that the flag complexes of $\WO_n$ and $\WA_n$ are $(n-2)$-dimensional and $(n-1)$-dimensional, respectively, and since $\calF_0(\mathcal{HO}_n)$ and $\calF_0(\mathcal{HA}_n)$ are isomorphic to links of $0$-simplices in the flag complexes of $\WO_n$ and $\WA_n$, respectively, namely the trivial vertices, the second statement follows from the first.
\end{remark}

The fact that $\WO_n$ is homotopy Cohen--Macaulay has already been proved in \cite[Theorem~5.13]{brady01}. To show that $\WA_n$ is also homotopy Cohen--Macaulay, we need to recreate and adjust the proof for $\WO_n$. First we will introduce some terminology dealing with posets. 

\begin{definition}
A poset $P$ is called 
\begin{itemize}
\item \emph{bounded} if it has a global miminum and a global maximum;
\item \emph{graded} if it is bounded and all maximal chains have the same length, which is then called the \emph{length} of $P$. 
\end{itemize}
\end{definition}

Given $p\leq q$ in a poset $P$, write $[p,q]=\{r\mid p\le r\le q\}$, and call this a (closed) \emph{interval}. Given $p,q\in P$ with $p\ne q$, we say that $q$ \emph{covers} $p$ if $[p,q]=\{p,q\}$. An \emph{atom} in a poset with a global minimum is any element that covers the global minimum.

There is a nice sufficient condition (see \cite[Lemma~5.3]{brady01}), originally due to Bj\"orner and Wachs \cite{bjoerner83}, for a bounded graded poset $P$ to be homotopy Cohen--Macaulay, namely that it admits a recursive atom ordering:  

\begin{definition}
Let $P$ be a bounded graded poset with global minimum $\widehat{0}$ and global maximum $\widehat{1}$. We say that $P$ \emph{admits a recursive atom ordering} if either $P$ has length $1$ or $P$ has length greater than $1$ and there is an ordering $a_1,\dots,a_\ell$ of the atoms of $P$ such that the following hold:
\begin{enumerate}
    \item For each $1\le j\le \ell$, the interval $[a_j,\widehat{1}]$ admits a recursive atom ordering such that any atoms of $[a_j,\widehat{1}]$ that cover some $a_i$ with $i<j$ come first in the ordering.
    \item For all $i<j$, if $a_i,a_j < y$ then there exist $i'<j$ and $z\le y$ such that $z$ covers $a_{i'}$ and $a_j$.
\end{enumerate}
\end{definition}

Let $\widehat{\WO}_n$ be the result of adding a new element to $\WO_n$, as a global maximum. Construct $\widehat{\WA}_n$ from $\WA_n$ in the same way. 
Let $\ZO_n$ and $\ZA_n$ be the dual posets of $\widehat{\WO}_n$ and $\widehat{\WA}_n$ respectively, i.e., replace every $\preceq$ with $\succeq$. By \cite[Theorem~5.12]{brady01}, $\ZO_n$ admits a recursive atom ordering. Our goal is to adapt that proof to the case of $\ZA_n$:

\begin{proposition}\label{zan:recursive}
 $\ZA_n$ admits a recursive atom ordering. 
\end{proposition}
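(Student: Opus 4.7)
The plan is to adapt the proof of \cite[Theorem~5.12]{brady01}, which establishes a recursive atom ordering for $\ZO_{n+1}$, to the auter poset $\ZA_n$. First I would verify that $\widehat{\WA}_n$ is a bounded graded poset of length $n$: boundedness follows from Observation~\ref{obs:degreebasic}(e) together with the adjoined $\widehat{1}$, and parts~(a), (c), and (g) of the same observation imply that every maximal chain from the minimum tree up to a maximal tree passes through exactly one tree of each degree between $0$ and $n-1$. The atoms of $\ZA_n$ are then the maximal trees of $\WA_n$, namely the trees of degree $n-1$ in which vertex $n+1$ is a leaf, and they form a subset of the atoms of $\ZO_{n+1}$.

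The crucial structural remark is that folding at a labeled vertex $v$ requires $v$ to have degree at least two, so no folding is ever performed at the leaf $n+1$. Consequently $\WA_n$ is a downward-closed subposet of $\WO_{n+1}$, and for each maximal tree $T\in\WA_n$ the interval $[T,\widehat{1}]$ computed in $\ZA_n$ coincides literally with the same interval computed in $\ZO_{n+1}$. Hence, once an ordering of the atoms of $\ZA_n$ is fixed, the recursive atom orderings on each $[a_j,\widehat{1}]$ can be inherited wholesale from the construction in \cite{brady01}, and condition~(1) in the definition of recursive atom ordering becomes a compatibility statement within the inherited ordering.

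What remains is to order the atoms of $\ZA_n$ and verify the diamond condition~(2). The natural candidate is the restriction, to atoms of $\ZA_n$, of the ordering of atoms of $\ZO_{n+1}$ from \cite{brady01}. Given $i<j$ with $a_i,a_j\in\WA_n$ and $y\in\ZA_n$ above both, any witness $z\leq y$ in $\ZA_n$ must satisfy $z \preceq a_j$ in $\WA_n$, which together with downward-closure of $\WA_n$ automatically guarantees $z\in\WA_n$. The genuine difficulty is to ensure that the accompanying atom $a_{i'}$ can be chosen inside $\WA_n$, that is, as a maximal tree of $\WA_n$ covering $z$.

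This last point is the main obstacle and where the bulk of the work lies. The tree $z$, being one folding below a maximal tree of $\WA_n$, has exactly one unlabeled vertex $w$ of degree three (with all other unlabeled vertices of degree two), and every maximal tree covering $z$ is obtained by unfolding $z$ at $w$, which amounts to choosing a ``splitting'' labeled neighbor $v$ of $w$; the resulting maximal tree lies in $\WA_n$ precisely when $v\neq n+1$. Since $n+1$ has degree one in $a_j$ it can be adjacent to at most one of the two unlabeled neighbors of $w$ in $a_j$, so among the three candidate choices of $v$ at least two are distinct from $n+1$, and $a_{i'}$ can always be chosen inside $\WA_n$. What must still be arranged is that such a choice is consistent with having $i'<j$ in the restricted ordering; I would handle this by constructing the ordering of atoms of $\ZA_n$ directly by mimicking the inductive scheme of \cite[Theorem~5.12]{brady01} (rather than merely restricting it), using the leaf $n+1$ as a fixed reference point so that the diamond rule always produces $a_{i'}\in\WA_n$ with $i'<j$. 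Combined with the interval recursion established above, this yields a recursive atom ordering on $\ZA_n$.
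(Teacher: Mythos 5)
You set up the reduction exactly as the paper does: $\ZA_n$ is a subposet of $\ZO_{n+1}$ closed under upper bounds, the intervals $[a_j,\widehat{1}]$ agree in the two posets and so inherit recursive atom orderings from \cite{brady01}, and the only thing at stake in the diamond condition is whether the witnessing atom $a_{i'}$ can be found inside $\ZA_n$ with $i'<j$. You also correctly identify that this last point is where all the content lies. But you do not actually close the gap. What you prove is that the tree $z$ (covered by $a_j$) has at least one cover in $\WA_n$: since $z$ has a unique unlabeled vertex of degree $3$ and at least two of its three labeled neighbours differ from $n+1$, some unfolding stays inside $\WA_n$. This is an existence statement about atoms above $z$; it says nothing about which atom the construction of \cite[Theorem~5.12]{brady01} actually produces, nor that the atom you exhibit occurs earlier than $a_j$ in any ordering. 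Your proposed fix --- ``construct the ordering directly by mimicking the inductive scheme, using the leaf $n+1$ as a fixed reference point'' --- is the step that would need to be carried out, and it is left entirely unspecified.

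The paper's resolution is sharper and avoids any redesign of the ordering. One simply restricts the Brady--McCammond--Meier--Miller ordering of atoms of $\ZO_{n+1}$ to the atoms lying in $\ZA_n$, and then observes that the algorithm implicit in the proof of \cite[Theorem~5.12]{brady01} always reaches $T_{i'}$ from $V$ by a \emph{split}, not a drop (in the rooted-at-$1$ picture). Viewing $V$ as an unrooted tree, any unfolding based at a leaf not labeled $1$ is a drop; since $n+1$ is such a leaf in $V$ (because $V\in\ZA_n$), the split producing $T_{i'}$ cannot be based at $n+1$, so $n+1$ stays a leaf and $T_{i'}\in\ZA_n$ automatically. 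Because the restricted ordering preserves relative order, $i'<j$ comes for free. This split/drop dichotomy is precisely the missing ingredient: without it (or an explicit alternative ordering), your argument does not establish condition (2) of the definition of recursive atom ordering.
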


First let us explain why Proposition~\ref{prop:hi_conn} for $\WA_n$ follows from Proposition~\ref{zan:recursive}.

\begin{proof}[Proof of Proposition~\ref{prop:hi_conn} for $\WA_n$]
Proposition~\ref{zan:recursive} and \cite[Lemma~5.3]{brady01} imply that 
$\ZA_n$ is homotopy Cohen--Macaulay. It is immediate from the definitions that the flag complex of $\WA_n$ is isomorphic to the link of the global minimum in $\ZA_n$, and that the property of being homotopy Cohen-Macaulay is inherited by links. Thus $\WA_n$ is homotopy Cohen--Macaulay as well.
\end{proof}

\begin{proof}[Proof of Proposition~\ref{zan:recursive}]
We first check that $\ZA_n$ is bounded and graded. Since $\ZA_n$ is the dual of $\widehat{\WA}_n$, this is equivalent to showing that $\widehat{\WA}_n$ is bounded and graded. Boundedness is clear: $\widehat{\WA}_n$ has a global maximum by construction and a global minimum since $\WA_n$ does. The fact that $\widehat{\WA}_n$ is graded follows directly from Observation~\ref{obs:degreebasic}(g).

Let us now prove that $\ZA_n$ has a recursive atom ordering. Since $\WA_n$ is defined as a subposet of $\WO_{n+1}$, we can view $\ZA_n$ as a subposet of $\ZO_{n+1}$, with the same global minimum, call it $\widehat{0}$. Clearly the subposet $\WA_n$ of $\WO_{n+1}$ is closed under taking lower bounds, since a folding cannot increase the degree of a labeled vertex, so the subposet $\ZA_n$ of $\ZO_{n+1}$ is closed under taking upper bounds. Hence, for any $T\preceq U$ in $\ZA_n$, the interval $[T,U]$ is the same viewed in either $\ZA_n$ or $\ZO_{n+1}$. 

Denote by $\mathbf{0}$ the unique tree in $\WO_{n+1}$ of degree $0$. Recall that it is the global minimum of $\WO_{n+1}$ and thus the global maximum of $\ZO_{n+1}$. By \cite[Lemmas~2.5 and~2.7]{brady01}, for every $\widehat{0}\ne T\in \ZO_{n+1}$, any ordering of the atoms of $[T,\mathbf{0}]$ in 
$\ZO_{n+1}$ is a recursive atom ordering, so by the previous paragraph the same holds in $\ZA_n$. In other words, the first condition in the definition of recursive atom ordering will always be automatically satisfied. 

As for the second condition, by Observation~\ref{obs:degreebasic}(f)(g), the posets $\ZA_n$ and $\ZO_{n+1}$ are both graded of the same length 
(namely $n-1$) and the atoms of $\ZA_n$ are precisely the atoms of $\ZO_{n+1}$ that lie in $\ZA_n$. Now we claim that the recursive atom ordering for $\ZO_{n+1}$ given in \cite{brady01} induces an ordering of the atoms of $\ZA_n$ that is also a recursive atom ordering. Say the atoms of $\ZA_n$ are ordered $T_1,\dots,T_\ell$. Let $i<j$ and say $T_i,T_j\preceq U$ for some $U\in\ZA_n$. Working in $\ZO_{n+1}$, where we already know we have a recursive atom ordering, we get that there exist $i'<j$ and $V\preceq U$ such that $V$ covers $T_{i'}$ and $T_j$. Since $\ZA_n$ is closed under upper bounds and $T_j\preceq V$ we know that $V\in\ZA_n$. The last thing to do is show that $T_{i'}\in\ZA_n$. 

The proof of \cite[Theorem~5.12]{brady01} essentially provides an algorithm for constructing $T_{i'}$ from $V$. In that proof the elements of $\ZO_{n+1}$ are viewed as trees rooted at $1$ and drawn upside down. According to this graphical representation foldings are naturally divided into two types, called lifts and merges, and the corresponding unfoldings are called drops and splits; see \cite[Definition~5.9]{brady01}, and also Figure~\ref{fig:drops_etc} for examples. The proof shows that $T_{i'}$ can be obtained from $V$ by a split (not a drop). 

On the other hand, if we view $V$ as an unrooted tree, then any unfolding at a leaf not labeled $1$ is a drop. Since $V\in \ZA_n$, the vertex $n+1$ in $V$ is a leaf not labeled $1$. Hence the unfolding that produces $T_{i'}$ from $V$ is not based at $n+1$, so $n+1$ remains a leaf in $T_{i'}$ and hence $T_{i'}\in\ZA_n$.
\end{proof}

\begin{figure}[htb]
\centering
\begin{tikzpicture}[line width=1pt]

\draw (0,0) -- (1,1) -- (2,0) -- (2,-1)   (1,1) -- (1,2)   (1,1) -- (1,0)   (1,-2) -- (2,-1) -- (3,-2);

\filldraw (1,2) circle (1.5pt);
\filldraw (0,0) circle (1.5pt);
\filldraw (1,1) circle (1.5pt);
\filldraw (1,0) circle (1.5pt);
\filldraw (2,-1) circle (1.5pt);
\filldraw (3,-2) circle (1.5pt);
\filldraw (2,0) circle (1.5pt);
\filldraw (1,-2) circle (1.5pt);

\node at (1,2.3) {$1$};
\node at (0,-0.3) {$2$};
\node at (1,-0.3) {$3$};
\node at (2.2,0.2) {$4$};
\node at (1,-2.3) {$5$};
\node at (3,-2.3) {$6$};

\node[scale=1.5] at (-2,2) {$\stackrel{\text{split at } 4}{\longleftarrow}$};
\node[scale=1.5] at (-2,0) {$\stackrel{\text{merge at } 4}{\longrightarrow}$};

\node[scale=1.5] at (4,0) {$\stackrel{\text{lift at } 2}{\longleftarrow}$};
\node[scale=1.5] at (4,2) {$\stackrel{\text{drop at } 2}{\longrightarrow}$};

\begin{scope}[xshift=-6cm]
\draw (0,0) -- (1,1) -- (3,-1) -- (3,-2)   (1,1) -- (1,2)   (1,1) -- (1,0)   (2,0) -- (1,-1) -- (1,-2);

\filldraw (1,2) circle (1.5pt);
\filldraw (0,0) circle (1.5pt);
\filldraw (1,1) circle (1.5pt);
\filldraw (1,0) circle (1.5pt);
\filldraw (3,-1) circle (1.5pt);
\filldraw (3,-2) circle (1.5pt);
\filldraw (2,0) circle (1.5pt);
\filldraw (1,-1) circle (1.5pt);
\filldraw (1,-2) circle (1.5pt);

\node at (1,2.3) {$1$};
\node at (0,-0.3) {$2$};
\node at (1,-0.3) {$3$};
\node at (2.2,0.2) {$4$};
\node at (1,-2.3) {$5$};
\node at (3,-2.3) {$6$};
\end{scope}

\begin{scope}[xshift=6cm]
\draw (0,0) -- (1,1) -- (2,0) -- (2,-1)   (1,1) -- (1,2)   (1,-2) -- (2,-1) -- (3,-2)   (0,0) -- (0,-2);

\filldraw (1,2) circle (1.5pt);
\filldraw (0,0) circle (1.5pt);
\filldraw (1,1) circle (1.5pt);
\filldraw (1,-2) circle (1.5pt);
\filldraw (3,-2) circle (1.5pt);
\filldraw (2,0) circle (1.5pt);
\filldraw (2,-1) circle (1.5pt);
\filldraw (0,-1) circle (1.5pt);
\filldraw (0,-2) circle (1.5pt);

\node at (1,2.3) {$1$};
\node at (0.2,-0.2) {$2$};
\node at (0,-2.3) {$3$};
\node at (2.2,0.2) {$4$};
\node at (1,-2.3) {$5$};
\node at (3,-2.3) {$6$};
\end{scope}

\end{tikzpicture}
\caption{An example of a split/merge, and a drop/lift.}
\label{fig:drops_etc}
\end{figure}

We are now ready to prove our main result.

\begin{proof}[Proof of parts (a) and (c) of Theorem~\ref{thm:mainPSA}]
The proofs of these two parts are analogous, so we will only prove (a), asserting that $\Sigma^{n-2}(\PSA_n)$ is dense in $\sphere(\PSA_n)$. Let $\chi$ be any character of $\PSA_n$ such that $\chi|_H\ne 0$ for all non-trivial $H\in\mathcal{HA}_n$. Let us verify the hypotheses of Theorem~\ref{thrm:MMV98}, to confirm that $[\chi]\in\Sigma^{n-2}(\PSA_n)$. We know that $\PSA_n$ and all the $H_A(T)$ are of type $\F_\infty$, hence $\F_{n-2}$, and that $\mathcal{HA}_n$ is non-empty, finite, intersection-closed (Proposition~\ref{prop:int_closed}), and $\infty$-generating, hence $(n-2)$-generating (Proposition~\ref{prop:contractible}). Since $\chi|_H\ne 0$ for all non-trivial $H\in \mathcal{HA}_n$, we have that $[\chi|_H]\in\Sigma^{n-2}(H)$ for all such $H$, since $H$ is abelian. Finally, $\calF_0(\mathcal{HA}_n)$ is $(n-3)$-connected by Proposition~\ref{prop:hi_conn}. All the hypotheses of Theorem~\ref{thrm:MMV98} have been confirmed, and so we conclude that $[\chi]\in\Sigma^{n-2}(\PSA_n)$. 

The last thing to show is that the set of all such $[\chi]$ is dense. It is clear from the McCool presentation that for any non-trivial $H_A(T)$, the image of $H_A(T)$ in the real vector space $\PSA_n/[\PSA_n,\PSA_n]\otimes \R$ has positive dimension. Thus, taking the dual, the subspace of $\Hom(\PSA_n,\R)$ consisting of characters that send $H_A(T)$ to $0$ has positive codimension. A finite union of subspaces with positive codimension has dense complement, so we are done.
\end{proof}

\section{More on $\Sigma^2$ for McCool groups}\label{sec:Sigma2}

Let us say that a character $\chi$ of $\PSA_n$ (respectively, $\PSO_n$) is \emph{generic} if $\chi|_H\ne 0$ for all non-trivial $H\in\mathcal{HA}_n$ (respectively, $H\in\mathcal{HO}_n$). It is not difficult to check that a character $\chi$ of $\PSA_n$ is generic if and only if $\chi(\alpha_{I,j})\neq 0$ for all (non-trivial) Whitehead generators $\alpha_{I,j}$ and likewise a character $\chi$ of $\PSO_n$ is generic if and only if $\chi(\overline{\alpha_{I,j}})\neq 0$ whenever $\overline{\alpha_{I,j}}\neq 1$ (recall that $\overline{\alpha_{I,j}}$ is the image of $\alpha_{I,j}$ in $\PSO_n$). The proof of Theorem~\ref{thm:mainPSA} shows that $\Sigma^{m}(\PSA_n)$ and $\Sigma^{m}(\PSO_n)$ contain (the equivalence classes of) all generic characters for $m\leq n-2$ and $m\leq n-3$, respectively, but does not say anything about non-generic characters.

The precise computation of $\Sigma^1(\PSA_n)$ in \cite{orlandikorner00} (see Theorem~\ref{thm:orlandikorner} below) suggests that $\Sigma^{m}(\PSA_n)$ should be much larger than the set of generic characters. In this section we will pose some specific questions about the structure of $\Sigma^{m}(\PSA_n)$ and then answer one of these questions for $m=2$. For brevity of exposition, we will limit our discussion to $\PSA_n$, but all the results and questions in this section have natural counterparts for $\PSO_n$.

\begin{theorem}[Orlandi-Korner]\label{thm:orlandikorner}
Let $n\geq 2$ and let $\chi$ be a non-zero character of $\PSA_n$. Then $[\chi]\not\in \Sigma^1(\PSA_n)$ if and only if one of the following holds:
\begin{itemize}
\item[(1)] There exist distinct indices $i,j$ such that $\chi(\alpha_{\ell,m})=0$ whenever $\{\ell,m\}\neq \{i,j\}$ as sets.
\item[(2)] There exist distinct indices $i,j,k$ such that $\chi(\alpha_{\ell,m})=0$ whenever $\{\ell,m\}\not\subseteq \{i,j,k\}$ and in addition $\chi(\alpha_{i,k})+\chi(\alpha_{j,k})=0$, $\chi(\alpha_{i,j})+\chi(\alpha_{k,j})=0$, and
$\chi(\alpha_{j,i})+\chi(\alpha_{k,i})=0$.
\end{itemize} 
\end{theorem}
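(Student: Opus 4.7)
The plan is to prove both implications by analyzing the connectedness of the positive-$\chi$ subgraph $\Gamma^+_\chi := \Cay(\PSA_n, S)^{h_\chi \geq 0}$, where $S = \{\alpha_{\ell m}\}$ is the McCool generating set; recall that $[\chi] \in \Sigma^1(\PSA_n)$ if and only if $\Gamma^+_\chi$ is connected.

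For the direction ``(1) or (2) implies $[\chi]\not\in\Sigma^1(\PSA_n)$'', I would use the following monotonicity principle: if $\pi \colon G \twoheadrightarrow H$ is a surjection of finitely generated groups and $\chi = \chi' \circ \pi$, then $\pi$ induces a surjective graph map from the positive-$\chi$ side of $\Cay(G)$ onto the positive-$\chi'$ side of $\Cay(H)$, hence $[\chi] \in \Sigma^1(G)$ implies $[\chi'] \in \Sigma^1(H)$. In case (1), I would define $\pi\colon \PSA_n \to F_2 = \langle a, b\rangle$ by $\alpha_{ij}\mapsto a$, $\alpha_{ji}\mapsto b$, and $\alpha_{\ell m}\mapsto 1$ for $\{\ell,m\}\ne\{i,j\}$. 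A short check shows that in each McCool relator at most one generator maps to a nontrivial element of $F_2$ (the unordered index pairs appearing in a single relator are always distinct, so only one of them can equal $\{i,j\}$), hence $\pi$ is well-defined and $\chi = \chi' \circ \pi$ for a nonzero character $\chi'$ of $F_2$. Since $\Sigma^1(F_2) = \emptyset$, which one verifies directly from the tree structure of $\Cay(F_2)$ by noting that any element lying just past a generator with negative $\chi'$-value is separated from the identity in the positive-$\chi'$ subgraph, we conclude $[\chi]\not\in\Sigma^1(\PSA_n)$. Case (2) is handled analogously via a surjection $\pi\colon\PSA_n\twoheadrightarrow\PSA_3$ killing all $\alpha_{\ell m}$ with $\{\ell,m\}\not\subseteq\{i,j,k\}$, reducing to the $n=3$ base case; the three balancing equations in (2) pull back precisely to the exceptional sphere in the complement of $\Sigma^1(\PSA_3)$.

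For the direction ``neither (1) nor (2) implies $[\chi]\in\Sigma^1$'', I would apply Brown's criterion, which reduces connectedness of $\Gamma^+_\chi$ to a combinatorial rewriting problem on the McCool presentation: for each element one wishes to reach from the identity in $\Gamma^+_\chi$, one seeks a word representative all of whose prefixes have non-negative $\chi$-value. The relations $[\alpha_{ij},\alpha_{kj}]=1$ and $[\alpha_{ij}\alpha_{kj},\alpha_{ik}]=1$ provide the essential moves: the first lets same-target generators commute freely, while the second yields the conjugation-type identity $\alpha_{ik} = (\alpha_{ij}\alpha_{kj})\alpha_{ik}(\alpha_{ij}\alpha_{kj})^{-1}$ and permutations thereof. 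A case analysis on the vanishing set $Z = \{(\ell,m) : \chi(\alpha_{\ell m})=0\}$ should show that outside configurations (1) and (2), there is always a triple of indices $(i,j,k)$ for which a suitable application of the triangular relation produces a valid descent path. The base cases $n=2$ (where $\PSA_2 = F_2$ and condition (1) is vacuously always satisfied) and $n=3$ (where the exceptional sphere from (2) first appears) are handled directly on the McCool Cayley $2$-complex.

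The main obstacle is the positive direction. The conditions in (2) involve three linear balancing equations among the $\chi$-values, not merely the vanishing of certain entries, and one must verify that only these specific balancings obstruct connectedness: any single failure of a balancing equation should allow a rewriting via the triangular relation that yields a valid descent path, whereas when all three balancings hold simultaneously the algebraic obstruction persists and disconnectedness is forced via the $n=3$ base case together with the monotonicity principle used in the converse direction.
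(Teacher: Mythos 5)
This theorem is not proved in the paper at all: it is quoted verbatim from Orlandi-Korner \cite{orlandikorner00}, so there is no in-paper argument to compare against, and your proposal has to stand on its own. Your ``only if'' direction (conditions (1) or (2) imply $[\chi]\notin\Sigma^1(\PSA_n)$) is structurally sound. The map to $F_2$ in case (1) is indeed well defined, since no McCool relator contains two generators with the same unordered index pair, and the retraction $\PSA_n\twoheadrightarrow\PSA_3$ in case (2) kills every relator involving an index outside $\{i,j,k\}$; combined with the standard fact that $\Sigma^1$ pushes forward under quotients, this is fine. However, you never actually establish the $n=3$ base case for (2); ``handled directly on the McCool Cayley $2$-complex'' is an assertion, not an argument, and this case is not trivial. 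The clean fix is to observe that the three balancing equations say precisely that the induced character of $\PSA_3$ vanishes on the Whitehead generators $\alpha_{[3]\setminus\{m\},m}$, i.e.\ on $\Inn(F_3)$, so $\chi$ factors through $\PSO_3$; since $\PSO_3$ acts freely and cocompactly on a contractible $1$-complex (Proposition~\ref{prop:collins}) it is free, necessarily of rank $3$, and $\Sigma^1(F_3)=\emptyset$ finishes the case by the same monotonicity principle.

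The genuine gap is the positive direction, which is where the entire content of the theorem lives, and your proposal does not contain a proof of it. The sentence ``a case analysis on the vanishing set $Z$ should show that \dots there is always a triple of indices for which a suitable application of the triangular relation produces a valid descent path'' is a statement of intent: you neither enumerate the configurations of $Z$ that can occur when (1) and (2) both fail, nor exhibit the rewriting that connects an arbitrary vertex of $\Gamma^+_\chi$ to the identity in any single such configuration. Even the simplest nontrivial instance --- $\chi$ supported on $\{(1,2),(1,3)\}$ with generic values, which satisfies neither (1) nor (2) --- requires a concrete scheme for pushing occurrences of the $\chi$-trivial and $\chi$-negative generators past $\chi$-positive ones using the relations $[\alpha_{ij},\alpha_{kj}]=1$ and $[\alpha_{ij}\alpha_{kj},\alpha_{ik}]=1$ without any prefix dipping below $0$, and it is exactly here that the three balancing equations of (2) emerge as the precise obstruction. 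Likewise, the claim that ``any single failure of a balancing equation should allow a rewriting \dots'' is the heart of the matter and is left unverified. As written, the proposal proves one implication modulo a base case and only sketches a strategy for the other, so it does not constitute a proof of the theorem.
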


Define the \emph{support} $\supp(\chi)$ of a character $\chi$ of $\PSA_n$ to be the set of all $(i,j)$ such that $\chi(\alpha_{i,j})\neq 0$. Thus, Theorem~\ref{thm:orlandikorner} implies in particular that the classes of characters whose support has at least 4 elements always lie in $\Sigma^1(\PSA_n)$, and one may ask if a similar phenomenon holds for other BNSR invariants.

\begin{question}\label{q:semigeneric}
Assume that $m\leq n-2$ and let $\chi$ a non-zero character of $\PSA_n$.
\begin{itemize}
\item[(a)] Does $\Sigma^m(\PSA_n)$ always contain $[\chi]$ whenever $|\supp(\chi)|$ is largest possible, that is, $\chi(\alpha_{i,j})\neq 0$ for all $i\neq j$?
\item[(b)] Does there exist a constant $C(m)$ and a subset $S(m,n)$ of $\{(i,j)\mid 1\leq i\neq j\leq n\}$ with $|S(m,n)|=C(m)$ such that $\Sigma^m(\PSA_n)$ always contains $[\chi]$ whenever $\supp(\chi)$ contains $S(m,n)$?
\item[(c)] Does there exist a constant $C(m)$ such that $\Sigma^m(\PSA_n)$ always contains $[\chi]$ whenever $|\supp(\chi)|>C(m)$?
\end{itemize}
\end{question}
Clearly, a positive answer to any part of Question~\ref{q:semigeneric} yields a positive answer to the preceding parts. We also note that
the conclusion of Theorem~\ref{thm:mainPSA} as well as each of the conditions (a)-(c) above yields a simple geometric condition on
the set $\sphere(\PSA_n)\setminus \Sigma^m(\PSA_n)$, the complement of $\Sigma^m(\PSA_n)$ in the character sphere of $\PSA_n$.

We first recall that $\sphere(\PSA_n)$ has dimension $n(n-1)-1$ and fix $m\leq n-2$. Theorem~\ref{thm:mainPSA} implies that $\sphere(\PSA_n)\setminus \Sigma^m(\PSA_n)$ is contained in the union of finitely many spheres of codimension $1$, but the number of spheres grows exponentially with $n$. A positive answer to (a) would reduce the number of spheres in the union to $n(n-1)$. Since $\Sigma^m(\PSA_n)$ is invariant under a natural action of the symmetric group $S_n$, it is easy to show that (b) would imply that $\sphere(\PSA_n)\setminus \Sigma^m(\PSA_n)$ is contained in the union of finitely many spheres of dimension $2C(m) n$. Finally, (c) would imply that the dimensions of the spheres can be bounded by a function of $m$ (independent of $n$).

\medskip

The main goal of this section is to answer Question~\ref{q:semigeneric}(b) in the affirmative for $m=2$ and $n\geq 10$. We will show that the set $S(2,n)=\{(1,2),(3,4),(5,6),(7,8),(9,10)\}$ has the required property; in other words, we will prove the following:

\begin{proposition} \label{prop:Sigma2sufficient}
Assume that $n\geq 10$. Then $[\chi]\in \Sigma^{2}(\PSA_n)$ whenever $\chi(\alpha_{i,j})\neq 0$ for all $(i,j)\in\{(1,2),(3,4),(5,6),(7,8), (9,10)\}$.
\end{proposition}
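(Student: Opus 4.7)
The plan is to invoke Meinert's criterion (Theorem~\ref{thm:Meinertcentral}) applied to the McCool presentation of $\PSA_n$, using the five Whitehead generators $\alpha_{1,2}, \alpha_{3,4}, \alpha_{5,6}, \alpha_{7,8}, \alpha_{9,10}$ as the pool of $\chi$-positive witnesses. By hypothesis, each of these generators is sent to a nonzero real number by $\chi$.

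The first ingredient is a trivial observation: these five elements pairwise commute. Indeed, the index pairs $\{1,2\}, \{3,4\}, \{5,6\}, \{7,8\}, \{9,10\}$ are pairwise disjoint, so the first McCool relation $[\alpha_{ij},\alpha_{k\ell}]=1$ for $i,j,k,\ell$ distinct gives all ten commutation identities among the five chosen generators. In particular, they generate a free abelian subgroup $A\le \PSA_n$ of rank $5$, on which $\chi$ is nowhere zero on a standard basis.

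The combinatorial heart of the argument is a pigeonhole principle applied to the three McCool relation families: the first family $[\alpha_{ij},\alpha_{k\ell}]=1$ has a "footprint" of four distinct indices in $[n]$, while the second $[\alpha_{ij},\alpha_{kj}]=1$ and the third $[\alpha_{ij}\alpha_{kj},\alpha_{ik}]=1$ each involve only three distinct indices. In every case the footprint consists of at most four indices, and hence can meet at most four of the five pairwise disjoint pairs $\{2k-1,2k\}\subseteq\{1,\ldots,10\}$. Thus for any McCool defining relation $r$ there is some $k\in\{1,\ldots,5\}$ whose pair is disjoint from the footprint of $r$, and then $\alpha_{2k-1,2k}$ commutes with both sides of $r$ by yet another application of the first McCool relation. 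This provides, uniformly, a $\chi$-positive element of $A$ that "centralizes" each relator of the McCool presentation.

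At this point, feed the data above into Meinert's criterion: present $\PSA_n$ as the quotient of the appropriate well-understood group $G$ (on the Whitehead generators, subject to whichever relations Theorem~\ref{thm:Meinertcentral} treats as given), and for each defining relator use the witness produced above to verify the centrality hypothesis. The conclusion is $[\chi] \in \Sigma^2(\PSA_n)$. The main obstacle is not the combinatorics, which is just $4<5$ plus bookkeeping, but rather cleanly matching the setup to the precise hypotheses of Theorem~\ref{thm:Meinertcentral}, i.e.\ choosing $G$ correctly and checking that the witnesses furnished by the pigeonhole argument satisfy exactly the condition Meinert's theorem asks for.
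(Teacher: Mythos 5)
Your pigeonhole argument for condition (b) of Theorem~\ref{thm:Meinertcentral} is exactly right and matches the paper: every McCool relator has footprint at most four indices, so one of the five disjoint pairs in $S=\{(1,2),(3,4),(5,6),(7,8),(9,10)\}$ misses the footprint entirely, and the corresponding generator (on which $\chi$ is nonzero) centralizes the relator via the first McCool relation. However, you have explicitly left open what you yourself identify as the main obstacle, namely choosing $G$ and verifying condition (a), that is, $[\chi]\in\Sigma^2(G)$; and this is precisely the nontrivial half of the proof. Without it the argument is incomplete: taking $G$ to be the free group on the $\alpha_{ij}$ gives $\Sigma^2(G)=\emptyset$, and taking $G$ to be the RAAG on \emph{all} McCool commutation relations makes simple-connectivity of the living subcomplex (Theorem~\ref{thm:RAAGcriterion}(2)) genuinely hard to check for a non-generic $\chi$.

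The missing idea is to shrink the RAAG to a chordal one. Define $\Gamma$ on all vertices $(i,j)$, $i\neq j$, but with an edge between $(i,j)$ and $(k,\ell)$ only when at least one of the two lies in $S$ \emph{and} $\alpha_{ij}$, $\alpha_{k\ell}$ commute in $\PSA_n$. Then any non-$S$ vertex has all its neighbors inside the clique $S$, which forces every cycle of length $\geq 4$ to have a chord, so $\Gamma$ is chordal and $\Sigma^1(A_\Gamma)=\Sigma^2(A_\Gamma)$. Since $S$ spans a living clique dominating every vertex, Theorem~\ref{thm:RAAGcriterion}(1) gives $[\chi]\in\Sigma^1(A_\Gamma)=\Sigma^2(A_\Gamma)$, which is condition (a). Crucially, your pigeonhole witness $x_{(k,\ell)}$ with $(k,\ell)\in S$ still commutes with the generators of the relator \emph{inside this smaller $G$}, because the edges you need are precisely those incident to $S$, which survive the pruning; so condition (b) still goes through. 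Your proposal supplies the easy half of the argument and correctly names the hard half, but does not carry it out.
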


We will prove Proposition~\ref{prop:Sigma2sufficient} using the following general criterion: 

\begin{theorem}[Commutation $\Sigma^2$-criterion for quotients]\label{thm:Meinertcentral}
Let $Q=\langle X \mid R \rangle$ be a finitely presented group, let $R_1$ be a subset of $R$ and $G=\langle X \mid R_1\rangle$ (so that there is a natural surjection from $G$ to $Q$). Let $\chi$ be a character of $Q$ such that
\begin{itemize}
\item[(a)] $[\chi]\in\Sigma^2(G)$ (we denote the character of $G$ induced by $\chi$ by the same symbol) and
\item[(b)] for every $r\in R\setminus R_1$ there exists $g_r\in G$ with $\chi(g_r)\neq 0$ such that $g_r$ commutes with the image of $r$ in $G$.
\end{itemize}
Then $[\chi]\in\Sigma^2(Q)$.
\end{theorem}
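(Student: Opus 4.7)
The plan is to reduce Theorem~\ref{thm:Meinertcentral} to Meinert's combinatorial $\Sigma^2$-criterion (Theorem~\ref{thm:Meinert}). Applied to the presentation $\langle X\mid R\rangle$ of $Q$, Meinert's criterion says that $[\chi]\in\Sigma^2(Q)$ provided one can ``$\chi$-fill'' loops in the Cayley $2$-complex $X_Q$ within sublevel sets. Hypothesis~(a) already guarantees the corresponding criterion for the sub-presentation $\langle X\mid R_1\rangle$ of $G$, so the only remaining task is to handle the relators $r\in R\setminus R_1$, and this is exactly the role of the commuting elements in hypothesis~(b).

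Concretely, for each $r\in R\setminus R_1$, pick $g_r\in G$ commuting with the image of $r$ in $G$ and with $\chi(g_r)>0$ (replacing $g_r$ by its inverse if needed). In the Cayley $2$-complex $X_G$ of $G$, the commutator relation $[g_r,r]=1$ furnishes at any base vertex $v$ a quadrilateral with corners $v,\,vg_r,\,v\bar r g_r,\,v\bar r$, where $\bar r$ denotes the image of $r$ in $G$. Since this word represents the identity in $G$, the quadrilateral bounds a disk made of $R_1$-labeled $2$-cells, and by $[\chi]\in\Sigma^2(G)$ the disk can be chosen inside a sublevel set $X_G^{h_\chi\ge u}$ with $u$ depending only on the $\chi$-heights of its corners. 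Pushing the disk forward via the $G$-equivariant map $X_G\to X_Q$ induced by $\pi\colon G\to Q$, we obtain a ``commutation strip'' in $X_Q$ realising the $r$-labeled $2$-cell at $v$ as homotopic, through $R_1$-labeled $2$-cells, to the $r$-labeled $2$-cell at $v\pi(g_r)$, joined by the edge reading $g_r$. Iterating with $g_r^n$ for large $n$ translates any $r$-labeled $2$-cell arbitrarily far in the positive $\chi$-direction.

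To verify essential simple connectivity of $X_Q^{h_\chi\ge 0}$, take a loop $\gamma$ in this sublevel set and any filling disk $D$ for $\gamma$ in $X_Q$. Replace each $R\setminus R_1$-labeled $2$-cell of $D$ by its iterated commutation translate so that it ends up at $\chi$-height $\ge 0$. The resulting modified disk $D'$ still fills $\gamma$; its $R\setminus R_1$-labeled cells now lie at height $\ge 0$, and the remaining $R_1$-labeled cells (from both the strips and from the original $D$) form a $2$-chain supported in the image of $X_G\to X_Q$. Lifting this $2$-chain to $X_G$ and applying hypothesis~(a) one last time confines its $\chi$-extent to some $X_G^{h_\chi\ge u}$, so $D'\subseteq X_Q^{h_\chi\ge u}$, giving $[\chi]\in\Sigma^2(Q)$.

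The main obstacle will be the uniform control of $u$: because only finitely many strips are needed and each is a filling of a translate of a fixed quadrilateral whose corner heights are bounded in terms of the data of $D$ (not of how far we push), the $\Sigma^2(G)$ constant yields a single $u\le 0$ that bounds all the strip fillings simultaneously. A cleaner route, which is probably what actually appears in Appendix~\ref{app:meinert}, is to bypass explicit disks and homotopies and feed the data directly into Meinert's criterion: his theorem reduces $[\chi]\in\Sigma^2(Q)$ to an admissible $\chi$-spelling condition on each relator, and the iterated commutation strip supplies exactly such a spelling for every $r\in R\setminus R_1$, while hypothesis~(a) handles $R_1$ by assumption.
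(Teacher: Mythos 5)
Your approach differs fundamentally from the paper's and, as written, has a gap at the final step. The paper's proof is a short algebraic reduction to Theorem~\ref{thm:Meinert} in its stated, group-theoretic form: form the short exact sequence $1\to N\to G\to Q\to 1$ where $N$ is the normal closure in $G$ of the image of $R\setminus R_1$. Hypothesis~(a) is condition~(i) of Theorem~\ref{thm:Meinert}, and condition~(ii) --- that $N$ is finitely generated as a $G_{\chi\geq 0}$-group --- follows immediately from hypothesis~(b): write any $n\in N$ as $\prod_i a_i r_i^{\pm1}a_i^{-1}$ with $r_i\in R\setminus R_1$, and replace each $a_i$ by $a_i g_{r_i}^{m_i}$ (which is allowed since $g_{r_i}$ commutes with $r_i$ in $G$) with $m_i$ chosen so that $\chi(a_i g_{r_i}^{m_i})\geq 0$, using $\chi(g_{r_i})\neq 0$. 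That is the entire argument; it never touches a Cayley complex.

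You instead attempt to verify essential simple-connectivity of the sublevel sets of $X_Q$ directly via commutation strips. Two problems. First, you paraphrase Theorem~\ref{thm:Meinert} as a ``$\chi$-filling'' criterion for the single presentation $\langle X\mid R\rangle$, but that is not what it says; it is a relative criterion comparing $\Sigma^2(G)$ and $\Sigma^2(Q)$ via a short exact sequence, so your proposal never actually invokes it. Second, and more substantively, the final step is unjustified: you assert that after pushing the $R\setminus R_1$-cells of $D$ up to height $\geq 0$, the remaining $R_1$-labeled cells form a $2$-chain that ``lifts to $X_G$'' and can then be confined by hypothesis~(a). But hypothesis~(a) controls fillings of \emph{loops} in $X_G^{h_\chi\geq 0}$; the boundary of your $R_1$-part is a $1$-cycle in $X_Q$, not a priori a disjoint union of loops, and even after decomposing it into loops those would need to represent the identity in $G$ (not merely in $Q$) before a lift to $X_G$ exists as a closed path to be refilled. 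It is exactly this height bookkeeping that Theorem~\ref{thm:Meinert} is designed to absorb, and the paper's proof sidesteps it entirely by checking the algebraic hypothesis~(ii) instead of re-deriving the geometry.
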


Theorem~\ref{thm:Meinertcentral} is a straightforward consequence of a criterion of Meinert (see Theorem~\ref{thm:Meinert} in Appendix~A) established in his Ph.D. thesis \cite{Meinertthesis}; see also a remark after the proof of Corollary~2.8 in \cite{Meinert97} where Theorem~\ref{thm:Meinert} is stated in a different language. Since \cite{Meinertthesis} is not easily accessible, we will provide a proof of Theorem~\ref{thm:Meinert} in Appendix~A where we will also explain why it implies 
Theorem~\ref{thm:Meinertcentral}.

\begin{remark}
Theorem~\ref{thm:Meinertcentral} is particularly convenient to apply to a group $Q=\langle X\mid R \rangle$ in the case when the majority of relations in $R$ are of the form $[x,y]=1$ for $x,y\in X$; call such relations \emph{RAAG-like}. In this case we can let $R_1$ be a set of some or all of the RAAG-like relations from $R$, so that $G=\langle X \mid R_1\rangle$ is a RAAG, and therefore $\Sigma^2(G)$ is completely determined by the main result of \cite{meier98} (see Theorem~\ref{thm:RAAGcriterion} below). This allows us to get a handle on condition (a). As for condition (b), a natural way to ensure this holds is to require that for each relator $r\in R\setminus R_1$ there exists a generator $x_r\in X$ such that, for every generator $x$ appearing in $r$, the (RAAG-like) relation $[x_r,x]=1$ holds and lies in $R_1$. Indeed, in this case if we set $g_r=x_r$, then condition (b) holds for any $\chi$ satisfying $\chi(x_r)\ne 0$. The more RAAG-like relations from $R$ are included in $R_1$, the easier it is to satisfy (b); however (unlike for $\Sigma^1$), including too many of the RAAG-like relations in $R_1$ can affect whether (a) holds. Thus, in practice it can be a balancing act to find the most useful $R_1$.
\end{remark}

Theorem~\ref{thm:RAAGcriterion} below describes the $m=1$ and $m=2$ cases of the full computation of the BNSR-invariant $\Sigma^m$ for an arbitrary RAAG, proved by Meier, Meinert, and Van Wyk in \cite{meier98}, and also see \cite{bux99} for an alternative proof due to Bux and Gonzalez. Before stating Theorem~\ref{thm:RAAGcriterion}, we introduce some standard notation and terminology. Let $\Gamma$ be a finite graph with no loops or multiple edges, and let $\widehat{\Gamma}$ be the flag complex associated to $\Gamma$ (that is, the simplices of $\widehat{\Gamma}$ are precisely the cliques in $\Gamma$). Let $A_\Gamma$ be the RAAG corresponding to $\Gamma$, that is, $A_{\Gamma}$ has generators $\{x_v\mid  v \mbox{ is a vertex of }\Gamma\}$ and defining relations $[x_v,x_w]=1$ whenever $v$ and $w$ are adjacent.
\vskip .12cm

Let $\chi$ be a non-zero character of $A_{\Gamma}$.
\begin{itemize}
\item A vertex $v$ of $\Gamma$ is called \emph{living} (with respect to $\chi$) if $\chi(v)\neq 0$ and \emph{dead} otherwise.
\item The \emph{living subcomplex of $\chi$} is the full subcomplex of $\widehat{\Gamma}$ spanned by the living vertices.
\item A simplex $\sigma$ of $\widehat{\Gamma}$ is called \emph{dead} if all its vertices are dead.
\item The \emph{living link} of a simplex $\sigma$ in $\widehat{\Gamma}$ is the full subcomplex of the link of $\sigma$ spanned by its living vertices.
\end{itemize}

\begin{theorem}\label{thm:RAAGcriterion}\cite{meier98,bux99}
Let $\Gamma$ and $A_{\Gamma}$ be as above and $\chi$ a non-zero character of $A_{\Gamma}$.
The following hold:
\begin{enumerate}
\item[(1)] $[\chi]\in \Sigma^1(A_{\Gamma})$ if and only if the living subcomplex of $\chi$ is connected and every dead vertex is adjacent to a living vertex;
\item[(2)] $[\chi]\in \Sigma^2(A_{\Gamma})$ if and only if
\begin{itemize}
\item the living subcomplex of $\chi$ is simply-connected,
\item the living link of every dead vertex is connected and non-empty, and
\item the living link of every dead edge is non-empty.
\end{itemize}
\end{enumerate}
\end{theorem}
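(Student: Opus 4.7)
The plan is to apply Theorem~\ref{thm:Meinertcentral} to the McCool presentation of $Q=\PSA_n$, taking $R_1$ to consist of all McCool relations of the first two types (the RAAG-like commutation relations among generators) and leaving the ``triangle relations'' $[\alpha_{i,j}\alpha_{k,j},\alpha_{i,k}]=1$ (for distinct $i,j,k$) in $R\setminus R_1$. Then $G:=\langle X\mid R_1\rangle$ is the right-angled Artin group $A_\Gamma$ whose commutation graph $\Gamma$ has a vertex (identified with the corresponding generator) $\alpha_{i,j}$ for each ordered pair $(i,j)$ of distinct elements of $[n]$, and in which $\alpha_{i,j}$ is adjacent to $\alpha_{k,\ell}$ if and only if $\{i,j\}\cap\{k,\ell\}=\emptyset$ or $j=\ell$. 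It remains to verify hypotheses (a) and (b) of Theorem~\ref{thm:Meinertcentral}.

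For hypothesis (b), given a triangle relator $r_{ijk}=[\alpha_{i,j}\alpha_{k,j},\alpha_{i,k}]$ with $i,j,k$ distinct, I will take $g_{r_{ijk}}=\alpha_{p,q}$ for some $(p,q)\in S:=\{(1,2),(3,4),(5,6),(7,8),(9,10)\}$ with $\{p,q\}\cap\{i,j,k\}=\emptyset$. Such an $\alpha_{p,q}$ commutes in $G$ with each of $\alpha_{i,j}$, $\alpha_{k,j}$, $\alpha_{i,k}$ via the type-1 RAAG relations (all of which lie in $R_1$), hence with the commutator $r_{ijk}$, while $\chi(\alpha_{p,q})\neq 0$ holds by assumption. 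A valid pair $(p,q)$ always exists: the five pairs in $S$ use ten distinct indices, so $\{i,j,k\}$ meets at most three of them, leaving at least two pairs in $S$ entirely disjoint from $\{i,j,k\}$.

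For hypothesis (a), I apply Theorem~\ref{thm:RAAGcriterion}(2) to $G=A_\Gamma$. The vertices $\alpha_{1,2},\alpha_{3,4},\alpha_{5,6},\alpha_{7,8},\alpha_{9,10}$ span a $5$-clique in $\Gamma$ (their index pairs being pairwise disjoint) and are all living by the hypothesis on $\chi$, so the living subcomplex $L$ contains the $4$-simplex $\sigma_S$ spanned by this set $S$. A short case analysis on how the coordinates $p,q$ of a vertex $\alpha_{p,q}$ sit with respect to $\{1,\dots,10\}$ (and whether shared indices occur as first or second coordinates of $S$-pairs) shows that every living vertex $v$ is adjacent in $\Gamma$ to at least three vertices of $\sigma_S$; denote the corresponding set by $S_v\subseteq S$. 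The dead-vertex and dead-edge conditions then follow quickly: for a dead vertex $\alpha_{i,j}$, the at least three pairs of $S$ disjoint from $\{i,j\}$ span a triangle in the living link of $\alpha_{i,j}$, which is therefore non-empty and connected; for a dead edge $\{\alpha_{i,j},\alpha_{k,\ell}\}$, the set $\{i,j\}\cup\{k,\ell\}$ has at most four elements, so at least one pair in $S$ avoids it and lies in the living link.

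The main obstacle will be showing that $L$ itself is simply connected. The key observation is that for any two living vertices $v_1,v_2$, inclusion-exclusion gives $|S_{v_1}\cap S_{v_2}|\geq |S_{v_1}|+|S_{v_2}|-|S|\geq 3+3-5=1$, so any two living vertices share a common neighbor in $\sigma_S$. Given a loop $\gamma$ in the $1$-skeleton of $L$, I first replace each edge $(a,b)$ with $a,b\notin S$ by a detour $a-s-b$ through some $s\in S_a\cap S_b$, homotoping across the triangle $\{a,b,s\}\subseteq L$. In the resulting loop every edge has an endpoint in $S$, so any remaining vertex $u\notin S$ has both of its loop-neighbors $u^{-},u^{+}\in S$; since $S$ is a clique, the triangle $\{u^{-},u,u^{+}\}\subseteq L$ lets me bypass $u$. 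After finitely many such moves, $\gamma$ lies inside the contractible $4$-simplex $\sigma_S$ and is therefore null-homotopic in $L$. This verifies hypothesis (a), and Theorem~\ref{thm:Meinertcentral} then delivers $[\chi]\in\Sigma^2(\PSA_n)$.
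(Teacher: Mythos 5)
Your proposal does not prove the statement in question. The statement here is Theorem~\ref{thm:RAAGcriterion}, the general Meier--Meinert--Van Wyk/Bux--Gonzalez description of $\Sigma^1(A_\Gamma)$ and $\Sigma^2(A_\Gamma)$ for an \emph{arbitrary} right-angled Artin group $A_\Gamma$ in terms of living subcomplexes and living links; in the paper this is an imported result, cited from \cite{meier98} and \cite{bux99}, and it says nothing about $\PSA_n$ or about characters nonvanishing on a distinguished set of generators. What you have written is instead an argument for Proposition~\ref{prop:Sigma2sufficient} (the application of Theorem~\ref{thm:Meinertcentral} to the McCool presentation of $\PSA_n$ with $S=\{(1,2),(3,4),(5,6),(7,8),(9,10)\}$). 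Worse, to verify hypothesis (a) you explicitly \emph{apply} Theorem~\ref{thm:RAAGcriterion}(2) to your graph $\Gamma$, so as a proof of that theorem the argument is circular; proving it genuinely would require working with the BNSR machinery for RAAGs directly (e.g.\ via the Salvetti complex or the Bestvina--Brady style Morse-theoretic arguments of \cite{meier98,bux99}), none of which appears in your proposal.

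As an aside, viewed as a proof of Proposition~\ref{prop:Sigma2sufficient} your route is close to viable but differs from the paper's: the paper deliberately uses a smaller graph, keeping only those commutation relations in which at least one generator comes from $S$, so that $\Gamma$ is chordal, $\Sigma^1(A_\Gamma)=\Sigma^2(A_\Gamma)$, and only part (1) of Theorem~\ref{thm:RAAGcriterion} needs checking; you instead take the full commutation graph and must verify the part (2) conditions by hand. In that verification there is a small gap: for a dead vertex $\alpha_{i,j}$, exhibiting a triangle of $S$-vertices in its living link shows the living link is non-empty but not that it is connected; you need the same inclusion--exclusion step you use later (every living vertex $w$ in the link satisfies $S_w\cap S_{\alpha_{i,j}}\neq\emptyset$, so $w$ is joined inside the living link to the clique of $S$-vertices adjacent to $\alpha_{i,j}$). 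With that repair your argument for the proposition would go through, but it still leaves Theorem~\ref{thm:RAAGcriterion} itself unproved.
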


Condition (2) is often difficult to check in specific examples; however, its verification can sometimes be avoided entirely as there is a natural condition on $\Gamma$ which guarantees that $\Sigma^1(A_{\Gamma})=\Sigma^2(A_{\Gamma})$, namely the following:

\begin{definition}\label{chordal}
A graph $\Gamma$ is called \emph{chordal} if any cycle $C$ of length $\geq 4$ in $\Gamma$ has a chord, that is, an edge of $\Gamma$ connecting non-consecutive vertices of $C$.
\end{definition}

Whenever $\Gamma$ is chordal we have $\Sigma^1(A_{\Gamma})=\Sigma^2(A_{\Gamma})$ \cite[Corollary~6.1]{meier98}, which makes understanding $\Sigma^2(A_{\Gamma})$ much easier in this case. We now use RAAGs corresponding to chordal graphs to prove our main result of this section, Proposition~\ref{prop:Sigma2sufficient}.

\begin{proof}[Proof of Proposition~\ref{prop:Sigma2sufficient}]
Let $S=\{(1,2),(3,4),(5,6),(7,8),(9,10)\}$ and define the graph $\Gamma$ as follows. The vertices of $\Gamma$ are all the pairs $(i,j)$ with $1\leq i\neq j\leq n$. Two distinct vertices $(i,j)$ and $(k,\ell)$ are adjacent if and only if at least one of them lies in $S$ and $[\alpha_{i,j},\alpha_{k,l}]=1$ in $\PSA_n$. Note that
\begin{itemize}
\item[(i)] any two vertices in $S$ are adjacent to each other,
\item[(ii)] every vertex of $\Gamma$ is adjacent to a vertex in $S$, and
\item[(iii)] any vertex of $\Gamma$ not in $S$ can only be adjacent to vertices that are in $S$.
\end{itemize}
If $C$ is a cycle in $\Gamma$, then either all vertices of $C$ are in $S$ or there is a vertex of $C$ outside of $S$, in which case both of its neighbors must be in $S$ by property (iii); either way, by (i) $C$ contains a chord as soon as its length is at least 4. Thus, $\Gamma$ is chordal.

Now let $Q=\PSA_n$ (with the McCool presentation) and $G=A_{\Gamma}$. By construction, $Q$ is a quotient of $G$. Let $\chi$ be a character of $Q$ such that $\chi(\alpha_{i,j})\neq 0$ whenever $(i,j)\in S$. We need to check that the induced character of $G$ (also denoted by $\chi$) satisfies the hypotheses of Theorem~\ref{thm:Meinertcentral}. The living subcomplex of $\chi$ contains $S$ and hence is connected by properties (i) and (ii) above. Likewise, property (ii) implies that every dead vertex is adjacent to a living vertex. Hence by Theorem~\ref{thm:RAAGcriterion}(1), $[\chi]$ lies in $\Sigma^1(G)$, and $\Sigma^1(G)=\Sigma^2(G)$ since $\Gamma$ is chordal. Thus we have confirmed condition (a) of Theorem~\ref{thm:Meinertcentral}. We now check (b); actually, we will show that it holds for all $r\in R$, where $R$ is the set of defining relators in the McCool presentation. This is because any $r\in R$ involves at most four indices, but $S$ has five elements, so there exists $(k,\ell)\in S$ such that neither $k$ nor $\ell$ appears in the subscript of any generator used in $r$, whence the corresponding generator $x_{(k,\ell)}$ of $G$ commutes (in $G$) with every generator used in~$r$.
\end{proof}

\appendix
\section{A proof of Meinert's $\Sigma^2$-criterion}\label{app:meinert}

The goal of this appendix is to state and prove a criterion of Meinert established in his Ph.D. thesis \cite[Satz~5.2.1]{Meinertthesis} and that is the key to proving Theorem~\ref{thm:Meinertcentral}.

In order to state Meinert's criterion, we need the notion of a generating set for groups with operators. Let $H$ be a group, and suppose we are given a set $\Phi$ together with a map $\Phi\to\Aut(H)$. We will say that $H$ is \emph{finitely generated as a $\Phi$-group} if there exists a finite subset $S$ of $H$ such that the smallest $\Phi$-invariant subgroup of $H$ containing $S$ is $H$ itself. If $H$ is a normal subgroup of some group $G$ and $\Phi$ is a subset of $G$, we can view $H$ as a $\Phi$-group where $\Phi$ acts on $H$ by conjugation.

\begin{theorem}[Meinert's $\Sigma^2$-criterion for quotients]\label{thm:Meinert}
Let $1\to N\to G\to Q\to 1$ be a short exact sequence of groups, with $G$ and $Q$ finitely presented. Let $\chi$ be a non-zero character of $Q$. Assume that
\begin{itemize}
\item[(i)] $[\chi]\in\Sigma^2(G)$ (as before we denote the character of $G$ induced by $\chi$ by the same symbol)
\item[(ii)] $N$ is finitely generated as a $G_{\chi\geq 0}$-group, where $G_{\chi\geq 0}=\{g\in G\mid \chi(g)\geq 0\}$.
\end{itemize}
Then $[\chi]\in \Sigma^2(Q)$.
\end{theorem}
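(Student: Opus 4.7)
The plan is to construct a simply connected cocompact free $Q$-$2$-complex $L$ by starting with a Cayley $2$-complex of $G$ and attaching enough extra $2$-cells to kill $N$ upon quotienting by it, then to verify essential simple connectivity of the height filtration of $L$ by combining hypothesis (i) with the $G_{\chi\ge 0}$-generation property of $N$.

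Fix a finite presentation of $G$ with Cayley $2$-complex $K$. Using (ii), pick generators $n_1,\dots,n_k$ of $N$ as a $G_{\chi\ge 0}$-group and represent each by a loop $\ell_i$ at $e\in K$. Let $D$ be a uniform lower bound on the heights attained on these finitely many loops. For every $g\in G$ and every $i$ attach a new $2$-cell $e_{g,i}$ to $K$ along $g\cdot\ell_i$, yielding a $G$-complex $\tilde L$; set $L=\tilde L/N$. Then $L$ is cocompact under its free $Q$-action, and is simply connected, because $\pi_1(K/N)=N$ and the new $2$-cells kill all $G$-conjugates $g n_i g^{-1}$, which generate $N$ as an abstract group (since $\{n_i\}$ normally generates $N$, a consequence of hypothesis (ii)). Thus $L$ is a valid $2$-complex on which to verify $[\chi]\in\Sigma^2(Q)$.

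For essential simple connectivity of $L^{h_\chi\ge t}$: by $Q$-equivariance of heights it suffices to fill loops $\gamma$ based at $\bar e$. Lift $\gamma$ to a path $\tilde\gamma$ in $\tilde L^{h_\chi\ge t}$ running from $e$ to some $n\in N$. Using (ii), write $n=\prod_{j=1}^m u_j n_{i_j}^{\varepsilon_j}u_j^{-1}$ with each $u_j\in G_{\chi\ge 0}$. Since (i) implies $[\chi]\in\Sigma^1(G)$, pick paths $p_{u_j}$ from $e$ to $u_j$ inside $K^{h_\chi\ge 0}$; splicing these with the translates $u_j\ell_{i_j}^{\varepsilon_j}$ yields a path $\alpha$ in $K^{h_\chi\ge -D}$ from $e$ to $n$. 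The loop $\alpha\cdot\tilde\gamma^{-1}$ then lies in $K^{h_\chi\ge\min(t,-D)}$ and, by (i), bounds a disk in $K^{h_\chi\ge u'}$ for some $u'\le t$. Projecting this disk to $L$ produces a homotopy $\gamma\simeq\bar\alpha$ in $L^{h_\chi\ge u'}$.

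Finally, $\bar\alpha=\bar\alpha_1\cdots\bar\alpha_m$ splits as a concatenation of loops at $\bar e$ (the conjugating $N$-translates disappear under the quotient), and each $\bar\alpha_j$ is bounded by a ``lollipop'' disk in $L^{h_\chi\ge -D}$: its stem is $\bar p_{u_j}$ (at height $\ge 0$) and its head is the projected $2$-cell $\bar e_{u_j,i_j}^{\varepsilon_j}$ at $\bar u_j$ (at height $\ge\chi(u_j)-D\ge -D$). Concatenating the lollipops fills $\bar\alpha$ in $L^{h_\chi\ge -D}\subseteq L^{h_\chi\ge u'}$, and combining with the previous step fills $\gamma$ in $L^{h_\chi\ge u'}$, completing the proof. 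The main obstacle is ensuring the comparison path $\alpha$ stays above the uniform level $-D$: this is exactly where hypothesis (ii) (with its insistence on conjugation only by $G_{\chi\ge 0}$, not by all of $G$) is indispensable, as it forces each $u_j$ to satisfy $\chi(u_j)\ge 0$ and hence keeps the lollipop repairs at a $\gamma$-independent height.
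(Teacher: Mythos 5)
Your proposal is correct and is essentially the paper's own argument rendered in topological rather than combinatorial language: the complex $L$ you build is precisely the Cayley $2$-complex of $Q$ for the presentation $\langle X\mid R\cup R_Q\rangle$ that the paper works with through its word-level reformulation of $\Sigma^2$ (Lemma~\ref{lem:combinatorial}), your comparison path $\alpha$ corresponds to the word $v=\prod_i b_i r_i b_i^{-1}$ with $\chi$-non-negative conjugators, the loop $\alpha\cdot\tilde\gamma^{-1}$ corresponds to the $G$-relator $u=wv^{-1}$, and your ``lollipop'' fillings at level $\ge -D$ correspond to the bound $\chi_{min}(v)\ge C_Q$. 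The only discrepancies are cosmetic: the sign convention on $D$ flips partway through, and the $\pi_0$-part of essential $1$-connectedness (i.e.\ $[\chi]\in\Sigma^1(Q)$) should be recorded separately, as in \cite{bieri87}.
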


This criterion should be compared to the classical fact that $[\chi]\in\Sigma^1(G)$ always implies $[\chi]\in\Sigma^1(Q)$ \cite[Proposition~3.3]{bieri87}.

Let us first deduce Theorem~\ref{thm:Meinertcentral} from Theorem~\ref{thm:Meinert}.

\begin{proof}[Proof of Theorem~\ref{thm:Meinertcentral}] Assume that $G$ and $Q$ are as in Theorem~\ref{thm:Meinertcentral}. Then we have the short exact sequence $1\to N\to G\to Q\to 1$ where $N$ is the normal closure of the image of $R\setminus R_1$ in $G$. Condition (i) in Theorem~\ref{thm:Meinert} holds by assumption, so we just need to check (ii). By definition any element $n\in N$ can be written as $n=\prod_{i=1}^k a_i r_i^{\pm 1} a_i^{-1}$ where $r_i\in R\setminus R_1$ and $a_i\in G$ (here we are equivocating between $r_i$ and its image in $G$). Recall that by the hypotheses of Theorem~\ref{thm:Meinertcentral}, for each $i$ there exists $g_i$ such that $\chi(g_i)\neq 0$ and $[g_i,r_i]=1$ in $G$. The latter condition means that in the above expression for $n$ we can replace $a_i$ by $a_i g_i^{m_i}$ for any integer $m_i$, and since $\chi(g_i)\neq 0$, after doing so we can assume that $a_i\in G_{\chi\geq 0}$. But this means that $N$ is generated as a $G_{\chi\geq 0}$-group by the finite set $R\setminus R_1$, so hypothesis (ii) in Theorem~\ref{thm:Meinert} holds, as desired.
\end{proof}

We now make some preparations for the proof of Theorem~\ref{thm:Meinert}. We will start with some terminology and notation. Let $X$ be a set. As usual, by $F(X)$ we will denote the free group with basis $X$. By $\Omega(X)$ we will denote the set of all finite words in the alphabet $X\sqcup X^{-1}$ (not necessarily reduced). We can think of $F(X)$ as both a subset and a quotient of $\Omega(X)$ (the quotient map $\Omega(X)\to F(X)$ is a homomorphism of monoids). While no non-empty word in $\Omega(X)$ is invertible in $\Omega(X)$, there is a natural involution $w\mapsto w^{-1}$ on $\Omega(X)$: if $w=x_1\cdots x_n\in \Omega(X)$, with $x_i\in X^{\pm1}$ for all $i$, we set 
$w^{-1}=x_n^{-1}\cdots x_1^{-1}$ where by convention $(x^{-1})^{-1}=x$ for all $x\in X$.

Let $R$ be a subset of $\Omega(X)$. By $R^{F(X)}$ we will denote the normal closure of the image of $R$ in $F(X)$. By $R^{\Omega(X)}$ we will denote the set of all words of the form $\prod_{i=1}^k a_i r_i a_i^{-1}$ where $a_i\in \Omega(X)$ and $r_i\in R^{\pm 1}$. Thus, $R^{\Omega(X)}$ maps onto $R^{F(X)}$ under the natural projection $\Omega(X)\to F(X)$.

\begin{definition}
Given a word $w=x_1\cdots x_n\in\Omega(X)$ with $x_i\in X^{\pm 1}$ for all $i$, by a \emph{prefix} of $w$ we will mean a subword of the form $x_1\cdots x_k$ with $k\leq n$. We do allow $k=0$.
\end{definition}

If $X$ is a generating set of some group $G$, any character of $G$ naturally induces a character of the group $F(X)$ and a character of the monoid $\Omega(X)$ (for simplicity we will not introduce separate notation for those induced characters).
 
Given $w\in \Omega(X)$ and a character $\chi$ of $\Omega(X)$, define
\[
\chi_{min}(w)= \min\{\chi(v)\mid v\text{ is a prefix of }w\}\text{.}
\]
We will say that $w$ is \emph{$\chi$-non-negative} if $\chi_{min}(w)= 0$ (since we count the empty word as a prefix, $\chi_{min}(w)$ is never positive).

The following observation collects some easy properties of the $\chi_{min}$ function.

\begin{observation}\label{obs:easy}
Let $X$ be a set, and let $u,v\in \Omega(X)$. The following hold:
\begin{itemize}
    \item[(a)] $\chi_{min}(uv)= \min\{\chi_{min}(u), \chi(u)+\chi_{min}(v)\}\geq \chi_{min}(u)+\chi_{min}(v)$.
    \item[(b)] (special case of (a)). If $\chi(u)=0$, then $\chi_{min}(uv)= \min\{\chi_{min}(u),\chi_{min}(v)\}$.
    \item[(c)] If $\chi(u)=0$, then $\chi_{min}(u^{-1})=\chi_{min}(u)$.
\end{itemize}
\end{observation}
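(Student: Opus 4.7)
The plan is to prove the three parts in order, starting from the definition of $\chi_{min}$ as the minimum of $\chi$ over all prefixes (including the empty prefix) of a word in $\Omega(X)$.

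For part (a), I would first observe that the prefixes of $uv$ split naturally into two families: those that are prefixes of $u$ (in which case $\chi$-values range over $\{\chi(u') : u' \text{ a prefix of } u\}$, whose minimum is $\chi_{min}(u)$), and those of the form $uv'$ where $v'$ is a non-empty prefix of $v$. For the second family, since $\chi$ is a homomorphism of monoids we have $\chi(uv') = \chi(u) + \chi(v')$, so the minimum over such prefixes is $\chi(u) + \min\{\chi(v') : v' \text{ a non-empty prefix of } v\}$. Combined with the empty-prefix case already subsumed by the first family, this minimum over all prefixes of $uv$ equals $\min\{\chi_{min}(u), \chi(u) + \chi_{min}(v)\}$, yielding (a).

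Part (b) is then immediate by specializing (a) to the case $\chi(u) = 0$.

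For part (c), I would argue that if $u = x_1\cdots x_n$ with each $x_i \in X^{\pm 1}$, then the prefixes of $u^{-1} = x_n^{-1}\cdots x_1^{-1}$ are exactly the words $x_n^{-1}\cdots x_{n-k+1}^{-1}$ for $0 \le k \le n$. Using the fact that $\chi$ descends from $\Omega(X)$ to $F(X)$ and that $\chi(u) = \chi(x_1\cdots x_{n-k}) + \chi(x_{n-k+1}\cdots x_n)$, we get
\[
\chi(x_n^{-1}\cdots x_{n-k+1}^{-1}) = -\chi(x_{n-k+1}\cdots x_n) = \chi(x_1\cdots x_{n-k}) - \chi(u).
\]
Under the assumption $\chi(u) = 0$, this equals $\chi(x_1\cdots x_{n-k})$, the value of $\chi$ on the length-$(n-k)$ prefix of $u$. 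As $k$ ranges over $0,\dots,n$, the expression $n-k$ ranges over the same set, so the set of $\chi$-values of prefixes of $u^{-1}$ coincides with the set of $\chi$-values of prefixes of $u$, and hence their minima agree, proving (c).

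There is no real obstacle here; the only mild subtlety is bookkeeping the empty prefix (to ensure $\chi_{min}$ is always $\le 0$) and correctly handling the identification of prefixes of $u^{-1}$ with reversed suffixes of $u$, both of which are purely definitional.
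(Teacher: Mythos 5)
The paper states Observation~\ref{obs:easy} without proof, labeling these as easy properties to be taken for granted; there is thus no paper argument to compare against. Your proof is correct and is the natural verification: the prefix decomposition in (a), the specialization in (b), and the bijection between prefixes of $u^{-1}$ and (the complements of) suffixes of $u$ in (c) are exactly the bookkeeping the authors are implicitly invoking, and you have correctly handled the empty prefix and the shift by $\chi(u)$.
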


We can rephrase the usual, topological definition of $\Sigma^2(G)$ combinatorially in terms of $\chi_{min}$ as follows.

\begin{lemma}\label{lem:combinatorial}
Let $\langle X\mid R\rangle$ be a finite presentation of a group $G$, and let $\chi$ be a non-zero character of $G$. Then $[\chi]\in \Sigma^2(G)$ if and only if $[\chi]\in \Sigma^1(G)$ and 
\begin{itemize}
\item[\Scombp]{ }{\rm :} there is a constant $C$ such that if $w\in \Omega(X)$ is any $\chi$-non-negative relator of $G$, then there exists $w'\in R^{\Omega(X)}$ such that $w'=w$ in $F(X)$ and $\chi_{min}(w')\geq C$.
\end{itemize}
(Note that we must have $C\leq 0$ since $\chi_{min}(w')$ is never positive.)
\end{lemma}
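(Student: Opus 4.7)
The plan is to use $\widetilde K$, the universal cover of the presentation $2$-complex of $\langle X\mid R\rangle$, as the $G$-complex in Definition~\ref{def:BNSR-invariants}. Its $1$-skeleton is the Cayley graph, its $2$-cells are $G$-translates of disks whose boundaries are labelled by relators $r\in R$, and edge-labelled loops at $1\in\widetilde K$ correspond bijectively to relator words $w\in\Omega(X)$. Under this correspondence, the vertices traversed by such a loop sit at heights equal to the $\chi$-values of the prefixes of $w$; in particular, a relator $w$ is $\chi$-non-negative precisely when the corresponding loop lies in $\widetilde K^{h_\chi\ge 0}$. By Remark~\ref{rem:BNSR-invariants}(b), it suffices to test essential $1$-connectivity of the filtration at $t=0$.

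For the forward direction I would fix, using $[\chi]\in\Sigma^2(G)$, a uniform $u\le 0$ such that every loop in $\widetilde K^{h_\chi\ge 0}$ is null-homotopic in $\widetilde K^{h_\chi\ge u}$. A $\chi$-non-negative relator $w$ corresponds to a loop at $1$ that admits a van Kampen diagram $D\to\widetilde K^{h_\chi\ge u}$, and the standard cut-along-a-spanning-tree construction converts $D$ into an explicit factorisation $w'=\prod_i a_ir_i^{\epsilon_i}a_i^{-1}\in R^{\Omega(X)}$ with $w'=w$ in $F(X)$, where each $a_i$ spells a tree path from the basepoint of $D$ to the $i$-th $2$-cell. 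Every prefix of $w'$ visits a vertex of $D$ lying in $\widetilde K^{h_\chi\ge u}$, so $\chi_{min}(w')\ge u$. Taking $C=u$ yields (\Scomb), and the condition $[\chi]\in\Sigma^1(G)$ is automatic from $\Sigma^2(G)\subseteq\Sigma^1(G)$.

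For the converse, assume (\Scomb) holds with constant $C$ and $[\chi]\in\Sigma^1(G)$. Fix $s\in X^{\pm 1}$ with $\chi(s)>0$. Given a loop $\gamma$ in $\widetilde K^{h_\chi\ge 0}$ based at some vertex $g$, the hypothesis $[\chi]\in\Sigma^1(G)$ provides a path $p$ from $g$ to $1$ in $\widetilde K^{h_\chi\ge u_1}$ for a uniform $u_1\le 0$, so that $p\gamma p^{-1}$ is a loop at $1$ labelled by a relator $w$ with $\chi_{min}(w)\ge u_1$. Choosing $k\in\N$ with $k\chi(s)\ge -u_1$, Observation~\ref{obs:easy} shows that $w_0:=s^kws^{-k}$ is a $\chi$-non-negative relator. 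Applying (\Scomb) to $w_0$ produces $w_0'=\prod_i a_ir_i^{\epsilon_i}a_i^{-1}\in R^{\Omega(X)}$ with $w_0'=w_0$ in $F(X)$ and $\chi_{min}(w_0')\ge C$. Assembling the $a_i$'s into a ``cactus'' $2$-disk $D'$ -- arcs $a_i$ wedged at the basepoint, each capped by the $2$-cell labelled $r_i$, and strung in order around the boundary -- and mapping it into $\widetilde K$ produces a singular disk whose vertices all lie at height at least $C+\min_{r\in R}\chi_{min}(r)$, the extra constant accounting for vertices interior to the $r_i$-cells. Contracting the $s^{\pm k}$-tails and composing with $p^{\pm 1}$ then yields a null-homotopy of $\gamma$ inside $\widetilde K^{h_\chi\ge u_2}$, for a uniform constant $u_2$ depending only on the presentation, $C$, $u_1$, and $k$, and not on $\gamma$ itself. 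This gives the essential $1$-connectivity needed to conclude $[\chi]\in\Sigma^2(G)$.

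The main technical obstacle is the discrepancy between $\Omega(X)$-equality and $F(X)$-equality in the converse direction: the cactus boundary spells literally $w_0'\in\Omega(X)$, whereas we only know $w_0'=w_0$ in $F(X)$. The two corresponding edge-loops are nonetheless homotopic rel endpoints in $\widetilde K^{(1)}$ via a sequence of elementary backtrack moves $xx^{-1}\leftrightarrow\emptyset$, and each such move can be absorbed by either pinching two adjacent boundary edges of $D'$ together (not enlarging the image) or attaching a single edge ``spike'' at a boundary vertex (introducing one new vertex at height at least $C-\max_{x\in X}|\chi(x)|$). Carrying out this bookkeeping so that all added vertices remain above a uniform floor is the main subtlety; once it is in place, the argument closes.
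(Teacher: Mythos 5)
Your argument is correct and follows essentially the same route as the paper: both work in the Cayley complex, reduce to $t=0$ via Remark~\ref{rem:BNSR-invariants}(b), and translate between nullhomotopies in sublevel subcomplexes and factorizations in $R^{\Omega(X)}$ with controlled $\chi_{min}$ (the paper packages your van Kampen/cactus constructions into a single ``standard fact'' about fundamental groups of CW-complexes, in which the $1$-skeleton homotopy is required to take place inside the sublevel subcomplex --- exactly where your backtracking bookkeeping lives). The subtlety you flag at the end is real but resolves as you suggest, and in fact more cleanly: free reduction never decreases $\chi_{min}$, and the backtrack homotopy between the loops labelled $w_0$ and $w_0'$ can be supported on the union of their images, so no extra constant is needed at that step.
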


\begin{proof}
Let $K$ be the Cayley complex of $G$ corresponding to the finite presentation $\langle X\mid R\rangle$. Let $h_\chi\colon K\to\R$ be as in the definition of $\Sigma$-invariants. 

First recall that $\Sigma^2(G)$ is contained in $\Sigma^1(G)$, and for a character $\chi$ we have $[\chi]\in \Sigma^1(G)$ if and only if $K^{h_\chi\ge t}$ is connected for some (or, equivalently, all) $t\in\mathbb R$. Thus, Lemma~\ref{lem:combinatorial} only deals with characters $\chi$ such that $[\chi]\in \Sigma^1(G)$, and in view of Remark~\ref{rem:BNSR-invariants}(b), it suffices to show that the following are equivalent for a given constant $C\leq 0$:
\begin{itemize}
\item[(a)] the inclusion $K^{h_\chi\ge 0}\to K^{h_\chi\ge C}$ induces the trivial map 
$\pi_1(K^{h_\chi\ge 0},1_G)\to \pi_1(K^{h_\chi\ge C},1_G)$ (note that $1_G$ lies in both $K^{h_\chi\ge 0}$ and $K^{h_\chi\ge 0}$).
\item[(b)](\Scomb) holds for $C$.
\end{itemize}

We will use the following standard fact about the fundamental groups of CW-complexes. Let $Y$ be a path-connected CW-complex and fix a vertex $v_0$ of $Y$. Then every element of $\pi_1(Y,v_0)$ is represented by an edge path, and if $p$ is any closed edge path starting from $v_0$, then $p$ represents the identity in $\pi_1(Y,v_0)$ if and only if there exists another closed edge path $p'$ starting from $v_0$ such that
\begin{itemize}
\item[(1)] $p'$ and $p$ represent the same element in $\pi_1(Y^1,v_0)$ where $Y^1$ is the $1$-skeleton of $Y$;
\item[(2)] $p'$ can be written as a product $\prod\limits_{i=1}^k\alpha_i \beta_i \alpha_i^{-1}$ where for each $i$ the path $\alpha_i$ is an edge path from $v_0$ to some vertex $v_i$ of $Y$ and there exists a $2$-cell $R_i$ containing $v_i$ such that $\beta_i$ is a closed path which starts at $v_i$ and traverses the boundary of $R_i$.
\end{itemize}

We are now ready to prove the equivalence of conditions (a) and (b) above. Below we will establish the implication  ``(a)$\Rightarrow$ (b)''. The other implication can be proved by just reversing the argument.

Suppose (a) holds. Let $w\in \Omega(X)$ be a $\chi$-non-negative relator of $G$, and let $p$ be the unique edge path in $\Cay(G,X)$ starting at $1_G$ whose label is $w$. Note that $p$ is closed since $w$ is a relator of $G$.

The assumption that $w$ is $\chi$-non-negative means precisely that $p$ lies in $K^{h_\chi\ge 0}$, so by our choice of $C$, the path $p$ represents the identity in 
$\pi_1(K^{h_\chi\ge C},1_G)$. Thus, there exists a closed edge path $p'$ in $Y=K^{h_\chi\ge C}$ satisfying (1) and (2) above. Let $w'\in \Omega(X)$ denote the label of $p'$. Condition (1) means precisely that $w=w'$ in $F(X)$, and condition (2) means precisely that $w'\in R^{\Omega(X)}$ with $\chi_{min}(w')\geq C$. Thus, $w'$ satisfies the conclusion of (\Scomb), and therefore (b) holds.
\end{proof}

\medskip

We can now prove Meinert's criterion.

\begin{proof}[Proof of Theorem~\ref{thm:Meinert}]
To reiterate the setup, we have a short exact sequence of groups $1\to N\to G\to Q\to 1$ with $G$ and $Q$ finitely presented, and a non-zero character $\chi\colon Q\to\R$, with $\chi$ also denoting the induced character of $G$. We are assuming that $[\chi]\in\Sigma^2(G)$ and that $N$ is finitely generated as a $G_{\chi\ge 0}$-group. Our goal is to prove that $[\chi]\in\Sigma^2(Q)$.

Since $[\chi]\in\Sigma^1(G)$, we clearly have $[\chi]\in \Sigma^1(Q)$ \cite[Proposition~3.3]{bieri87}. Hence by Lemma~\ref{lem:combinatorial} it suffices to show that $\chi$ considered as a character of $Q$ satisfies (\Scomb).

Fix a finite presentation $\langle X\mid R\rangle$ of $G$. Choose a finite subset $R_Q$ of $\Omega(X)$ whose image in $G$ generates $N$ as a $G_{\chi\geq 0}$-group, and set
\[
C_Q = \min\{\chi_{min}(r)\mid r\in R_Q\}\quad\text{ and}\quad
C_X = \max\{\chi(x)\mid x\in X\sqcup X^{-1}\}\text{.}
\]
Note that $C_Q\le 0$ and $C_X>0$. Next, let $C_G\leq 0$ be a constant for $\chi$ considered as a character of $G$ in (\Scomb), and set
\[
C = C_Q - C_X + C_G\text{.}
\]
Note that $C<C_Q$.
\vskip .1cm

We will show that (\Scomb) holds for $\chi$ as a character of $Q$ using this constant $C$ and the finite presentation $Q=\langle X\mid R\cup R_Q\rangle$. Let $w\in\Omega(X)$ be a $\chi$-non-negative relator of $Q$. The image of $w$ in $G$ lies in $N$, so we can choose $v\in\Omega(X)$ with $v=w$ in $G$ such that $v=\prod_{i=1}^k b_i r_i b_i^{-1}$, for some $r_i\in R_Q^{\pm 1}$ and $b_i$ satisfying $\chi(b_i)\geq 0$ for each $i$. Since $[\chi]\in \Sigma^1(G)$, without loss of generality $\chi_{min}(b_i)\geq 0$ for all $i$; by Observation~\ref{obs:easy}(b) this ensures that $\chi_{\min}(v)\geq C_Q$. Now set $u= wv^{-1}$, so $u$ is a relator of $G$, and since $\chi_{min}(w)=0$ by assumption, we compute using Observation~\ref{obs:easy}(b)(c) that
\[
\chi_{min}(u)\geq \min\{\chi_{min}(w),\chi_{min}(v^{-1})\}=\chi_{min}(v^{-1})=\chi_{min}(v)\geq C_Q\text{.}
\]
Next choose $x\in X\sqcup X^{-1}$ with $\chi(x)>0$, and choose $n\ge 0$ minimal such that $\chi(x^n)\geq -C_Q$. By minimality, $\chi(x^n)< -C_Q + C_X$. Set $u'= x^n u x^{-n}$, so $u'$ is a $\chi$-non-negative relator of $G$. Since $[\chi]\in\Sigma^2(G)$ we can choose $u''\in R^{\Omega(X)}$ such that $u'=u''$ in $F(X)$ and $\chi_{min}(u'')\geq C_G$. Say $u''=\prod_{j=1}^l a_j r_j a_j^{-1}$ for some $a_j\in\Omega(X)$ and $r_j\in R^{\pm 1}$. Since $\chi(a_j r_j a_j^{-1})=0$ for all $j$, by Observation~\ref{obs:easy}(b) we get
\[
\min_j \{\chi_{min}(a_j r_j a_j^{-1})\}=\chi_{min}(u'')\geq C_G \text{.}
\]

But now we can rewrite $w$ as an element of $F(X)$ as follows:
\[
w=uv=(x^{-n}u' x^n)v=(x^{-n}u'' x^n)v = \left(\prod_{j=1}^l x^{-n} a_j r_j a_j^{-1}x^{n}\right)\cdot v \text{.}
\]
Let $w'\in\Omega(X)$ denote this last word. Since $v\in R_Q^{\Omega(X)}$, we have that $w'$ lies in $(R\cup R_Q)^{\Omega(X)}$ and $w=w'$ in $F(X)$. The only thing remaining is to prove that $\chi_{min}(w')\ge C$. Using Observation~\ref{obs:easy}(b) again, we have
\[
\chi_{min}(w') = \min\{\chi_{min}(v), \chi_{min}(x^{-n} a_j r_j a_j^{-1}x^{n})\mid 1\leq j\leq l\}\text{.}
\]
If $\chi_{min}(w')=\chi_{min}(v)$, then $\chi_{min}(w')\ge C_Q> C$ and we are done. Now suppose that $\chi_{min}(w')=\chi_{min}(x^{-n} a_j r_j a_j^{-1}x^{n})$. By Observation~\ref{obs:easy}(a) we have
\[
\chi_{min}(x^{-n} a_j r_j a_j^{-1}x^{n})\geq \chi_{min}(x^{-n})+\chi_{min}(a_j r_j a_j^{-1})+\chi_{min}(x^{n})\text{.}
\]
By construction we have $\chi_{min}(x^{-n})> C_Q - C_X$, $\chi_{min}(x^{n})= 0$, and $\chi_{min}(a_j r_j a_j^{-1})\geq C_G$, so putting everything together we get $\chi_{min}(w')\geq C_Q-C_X+C_G = C$, as desired.
\end{proof}

\bibliographystyle{alpha}

\newcommand{\etalchar}[1]{$^{#1}$}


\end{document}